\newcommand{\R}{\mathbb{R}}
\newcommand{\Q}{\mathbb{Q}}
\newcommand{\N}{\mathbb{N}}
\newcommand{\proj}{\mathrm{proj}}
\newcommand{\il}{\overline{\imath}}
\newcommand{\jl}{\overline{\jmath}}
\newtheorem*{condition*}{Condition}
\newcommand*{\e}[1]{\text{e}^{#1}}
\newcommand*{\ind}{\mathbbm{1}}
\newtheorem{theorem}{Theorem}[section]
\theoremstyle{plain}
\newtheorem{claim}{Claim}
\newtheorem{corollary}[theorem]{Corollary}
\newtheorem{definition}[theorem]{Definition}
\newtheorem{example}[theorem]{Example}
\newtheorem{lemma}[theorem]{Lemma}
\newtheorem{notation}[theorem]{Notation}
\newtheorem{proposition}[theorem]{Proposition}
\newtheorem*{assumptionA}{Assumption A}
\newtheorem*{assumptionB}{Assumption B}
\newtheorem*{assumptionC}{Assumption C}
\definecolor{trp}{rgb}{1,1,1}
\definecolor{red}{rgb}{1,0,.2}
\definecolor{blue}{rgb}{0,0,1}
\newcommand{\cblue}[1]{\clrblue{ #1}}
\definecolor{rgrey}{rgb}{.8,0.4,.4}  
\definecolor{grey}{rgb}{.13,.13,.13}  
\definecolor{green}{rgb}{0.0,0.4,0.2}
\newcommand{\bk}[1]{\cblue{ #1}\marginpar{\cblue{$\clubsuit$}}}
\renewcommand{\bk}{}
\renewcommand{\cblue}{}
\begin{document}

\title[Triangular self-affine IFS]{On the dimension of triangular self-affine sets}


\author{Bal\'azs B\'ar\'any}
\address[Bal\'azs B\'ar\'any]{Budapest University of Technology and Economics, MTA-BME Stochastics Research Group, P.O.Box 91, 1521 Budapest, Hungary}
\email{balubsheep@gmail.com}

\author{Micha\l\ Rams}
\address[Micha\l\ Rams]{Institute of Mathematics, Polish Academy of Sciences, ul. \'Sniadeckich 8, 00-656 Warszawa, Poland}
\email{rams@impan.pl}

\author{K\'aroly Simon}
\address[K\'aroly Simon]{Budapest University of Technology and Economics, Department of Stochastics, Institute of Mathematics, 1521 Budapest, P.O.Box 91, Hungary} \email{simonk@math.bme.hu}

\subjclass[2010]{Primary 28A80 Secondary 28A78}
\keywords{Self-affine measures, self-affine sets, Hausdorff dimension.}
\thanks{The research of B\'ar\'any and Simon was partially supported by the grant OTKA K104745. B\'ar\'any was partially supported by the János Bolyai Research Scholarship of the Hungarian Academy of Sciences. Micha\l\ Rams was supported by National Science Centre grant 2014/13/B/ST1/01033 (Poland). This work was partially supported by the  grant  346300 for IMPAN from the Simons Foundation and the matching 2015-2019 Polish MNiSW fund. Part of Simon's research was supported by ICERM by supporting his participation on one of their a semester programs in 2016.}

\begin{abstract}
As a continuation of a recent work \cite{barany2016dimension} of the same authors, in this note we study the dimension theory of
diagonally homogeneous triangular planar self-affine IFS.
\end{abstract}

\date{\today}

\maketitle
\section{Introduction}
\subsection{The theme of the paper}
The dimension theory of self-affine measures and sets is so complicated that even on $\mathbb{R}^2$, in the diagonal case (when all the linear part of the mappings from the IFS are diagonal matrices),  it is not fully understood. The authors of this note have recently investigated this question \cite{barany2016dimension}. Namely,
consider the diagonal self-affine IFS on the plane
\begin{equation}\label{k53}
    \mathcal{S}^{\mathrm{diag}}\!:=\left\{S_i^{\mathrm{diag}}(x,y)\!:=
   D_i \!\cdot\! \left(
                                                \begin{array}{c}
                                                  x \\
                                                  y \\
                                                \end{array}
                                              \right)+
                                              \left(
                                                \begin{array}{c}
                                                  u_i \\
                                                  v_i \\
                                                \end{array}
                                              \right)
  \right\}
  _{i=1}^{N}\!, \mbox{where }
  D_i\!:=\left(
                                  \begin{array}{cc}
                                    c_i & 0 \\
                                    0 & b_i \\
                                  \end{array}
                                \right)\!.
\end{equation}
The projection of the coordinate axis,
 naturally generates a self-similar IFS on both coordinate axis. Assume that not both of the similarity dimensions of these projected self-similar IFS are greater than one.
In this case, the dimension theory of  diagonal self-affine systems on $\mathbb{R}^2$ are settled in \cite{barany2016dimension}, at least  for all but a very small set of parameters.

In this note we make a step forward and consider triangular self-affine IFS. That is we assume that the linear part of the mappings from the IFS are triangular matrices (all of them are lower triangular say). More precisely, let
\begin{equation}\label{k16}
  \mathcal{S}:=\left\{S_i(x,y):=T_i \cdot \left(
                                                \begin{array}{c}
                                                  x \\
                                                  y \\
                                                \end{array}
                                              \right)+
                                              \left(
                                                \begin{array}{c}
                                                  u_i \\
                                                  v_i \\
                                                \end{array}
                                              \right)
  \right\}
  _{i=1}^{N}, \mbox{where }
  T_i:=\left(
                                  \begin{array}{cc}
                                    c_i & 0 \\
                                    d_i & b_i \\
                                  \end{array}
                                \right).
\end{equation}
We say that $\mathcal{S}$
is diagonally homogeneous if the corresponding diagonal system $\mathcal{S}^{\mathrm{diag}}$ is homogeneous, that is, all the matrices $D_i$ in \eqref{k53} are identical. In this case we denote $c:=c_i$ and $b:=b_i$ for all $i$.
We mostly investigate the diagonally homogeneous case, see Section~\ref{k54} and Section~\ref{sec5}. However, in the general case, we have result in the case when affinity dimension is smaller than one, see Section~\ref{k60}.

\medskip

\subsection{History}
A self-affine Iterated Function System (IFS) is a finite list of contracting affine mappings on $\mathbb{R}^d$. If we choose a ball $B\subset \mathbb{R}^d$  centered at the origin with sufficiently high radius then this ball will be mapped into itself by all the mappings from the IFS.
The ellipses obtained by applying the mappings of the IFS, in any particular order $n$-times, on this ball $B$,
are the $n$-cylinders. As $n$ tends to infinity, the shapes of many of the $n$-cylinders become more and more relatively thinner and longer. This  makes it possible that even if the $n$-cylinders are pairwise disjoint, in some cases, they are not effective covers of the attractor (which is  the set that remains after infinite number of iterations of the mappings from the IFS on this ball $B$ above).

In 1988 Falconer introduced the notion of affinity dimension \cite{falconer1988hausdorff}
for self-affine fractals. We obtain it if we replace  the "most economic cover" in the definition of Hausdorff dimension with the most natural cover  associated with the  $n$-cylinders. In some sense the affinity dimension is the most natural guess for the Hausdorff dimension of a self-affine attractor.
Since the affinity dimension $\dim_{\rm {aff}}(\mathcal{F}) $ of a self-affine IFS
\begin{equation}\label{k55}
  \mathcal{F}:=\left\{f_i(x)=A_i \cdot \mathbf{x}+\mathbf{t}_i\right\}_{i=1}^{N}
\end{equation}
depends only on the linear parts, therefore it remains the same if we substitute $\mathcal{F}$ with its translations.
In 1988 Falconer proved that
 for almost all translates of a self-affine IFS, the Hausdorff dimension of the attractor and affinity dimension of the IFS  are equal, if
all of the mappings from the IFS has strong enough contraction. This upper bound on the contractions was originally $1/3$, which was improved 10 year later by Solomyak \cite{solomyak1998measure} to $1/2$.
For a survey of  results before 2014 see e.g. \cite{simon2014dimension}.
In the last two years there have been a very intensive development on this filed, partially due to the use of Furstenberg measure. See \cite{barany2012stationary}, \cite{barany2014ledrappier}, \cite{barany2015dimension}, \cite{barany2015ledrappier}, \cite{Rapaport}, \cite{FalconerKempton},  \cite{hochman2016dimension}.
\subsection{Affinity dimension in the triangular case on $\mathbb{R}^2$}
In the special case of the triangular self-affine IFS,
Falconer and Miao \cite[Corollary 2.6]{falconer2007dimensions} showed that if all the matrices are (e.g. lower) triangular then the affinity dimension can be given explicitly by a formula which depends only on the diagonal elements of the matrices. This formula is rather simple when we are on the plane. Namely, assume that the self-affine IFS is given in the formula
\eqref{k16} and let $s_x$ and $s_y$ be the similarity dimension of the self-similar IFS
\begin{equation}\label{k58}
 \mathcal{H}:= \left\{h_i(x):=c_ix+u_i\right\}_{i=1}^{N} \mbox{ and }
\mathcal{V}:=\left\{\varphi_i(x):=b_iy+v_i\right\}_{i=1}^{N}
\end{equation}
respectively. Clearly, $\mathcal{H}$ is the horizontal and $\mathcal{V}$ is the vertical
projection
 of the corresponding diagonal system given in the form
\eqref{k53}. Let
$$
\widehat{s}_x:=\min\left\{s_x,1\right\}
\mbox{ and }
\widehat{s}_y:=\min\left\{s_y,1\right\}
$$
We define $d_x$ and $d_y$ as the solutions of the following equations:
\begin{equation}\label{k56}
  \sum\limits_{i=1}^{N}c_i^{\widehat{s}_x}b_{i}^{d_x-\widehat{s}_x}=1
  \mbox{ and }
  \sum\limits_{i=1}^{N}b_i^{\widehat{s}_y}c_{i}^{d_y-\widehat{s}_y}=1.
  \end{equation}
Then (c.f. \cite[Theorem 4.1]{barany2012dimension}) the affinity dimension of $\mathcal{S}$ is
\begin{equation}\label{k57}
  \dim_{\rm{aff}}\left(\mathcal{S}\right)
  =
  \max\left\{d_x,d_y\right\}.
\end{equation}
 We say that direction-$x$, (direction-$y$)  dominates if $\dim_{\rm{aff}}\left(\mathcal{S}\right)=d_x$,
 ($\dim_{\rm{aff}}\left(\mathcal{S}\right)=d_y$) respectively. It follows from the definition of the Hausdorff dimension that the affinity dimension is always an upper bound for the Hausdorff dimension of the attractor (see \cite{falconer1988hausdorff}).

\subsection{Notation}\label{k92}

Throughout this note,  all self-affine IFS on the plane are supposed to be of the form of \eqref{k16}. Without loss of generality we may assume that
$$S_i([0,1]^2)\subset [0,1]^2 \mbox{ for all } i=1, \dots ,N$$

As we mentioned above, the attractor of $\mathcal{S}$ is $$\Lambda:=\bigcap\limits_{n=1}^{\infty }\bigcup_{\mathbf{i}\in\left\{1, \dots ,N\right\}^n}S_{\mathbf{i}}([0,1]^2),
$$
where $S_{\mathbf{i}} :=S_{i_1}\circ\cdots\circ S_{i_n} $ for $\mathbf{i}=(i_1, \dots ,i_n)$.
Let $\mu$ be the uniform distribution measure on the symbolic space $\Sigma:=\left\{1, \dots ,N\right\}^{\mathbb{N}}$. That is $\mu$ is the $(1/N,\ldots,1/N)$-Bernoulli measure.
We define $\Pi:\Sigma\to\Lambda_\mathcal{S}$ in the natural way.
$$
\Pi(\mathbf{i}):=\lim\limits_{n\to\infty} S_{\mathbf{i}|_n}(0),\quad \mathbf{i}\in\Sigma.
$$
 The push forward measure of $\mu$ is
  $\nu:=\Pi_*\mu.$ The attractor of the self-similar IFS $\mathcal{H}$ introduced in \eqref{k58}
is denoted by
	$\Lambda_\mathcal{H}$.
The natural projection generated by $\mathcal{H}$ is
$$
\Pi_{\mathcal{H}}(\mathbf{i}):=\lim\limits_{n\to\infty}
h_{\mathbf{i}|_n}(0),\quad \mathbf{i}\in\Sigma.
$$
Clearly, $\Pi_{\mathcal{H}}=\mathrm{proj}_x\circ \Pi$, where $\mathrm{proj}_x$ is the orthogonal projection to the $x$-axis. The measure on the $x$-axis generated by $\mu$ is
\begin{equation}\label{k93}
  \nu_x:=(\Pi_{\mathcal{H}})_*\mu=(\mathrm{proj}_x)_*\nu.
\end{equation}
Now we introduce the Furstenberg measure. The projection $\Pi_{AF}$ below will be used to construct a method to check that the transversality condition holds.

\subsection{Furstenberg measure}\label{k68}
In this subsection we study the action of our system $\mathcal{S}$ on the projective line, in the case
\begin{equation}\label{k70}
  c_i>b_i,\quad, \mbox{ for all }i=1, \dots ,N.
\end{equation}
In particular, the direction-$x$ dominates.

This action can be identified with the action of a simple iterated function system on the line.
Consider the vertical line   $\xi:=\left\{(1,z)\in\mathbb{R}^2:z\in \mathbb{R}\right\}$ on the plane.
We can identify $(1,z)\in\xi$ with $\widetilde{z}\in \mathbb{R}$. With this identification we define the self-similar IFS $\mathcal{F}$ on $\xi$ by
\begin{equation}\label{k69}
 \mathcal{F}:=\left\{f_i(\widetilde{z}):=\frac{b_i}{c_i}\widetilde{z}+\frac{d_i}{c_i}\right\}_{i=1}^{N},
\end{equation}
(Recall that in this Subsection $c_i>b_i$.)
It follows from \eqref{k70} that all $f_i$ are strict contractions.  So, we can define the natural projection
$\Pi_{AF}:\Sigma\to\xi$ in the usual way:
\begin{equation}\label{k71}
\Pi_{AF}(\mathbf{i}):=\frac{d_{i_1}}{c_{i_1}}+
\sum\limits_{k=2}^{\infty }\frac{d_{i_k}}{c_{i_k}} \cdot
\prod_{\ell =1}^{k-1}\frac{b_{i_{\ell} }}{c_{i_{\ell }}}.
\end{equation}
The importance of $\Pi_{AF}$ is as follows:
The action of $\left\{T_i\right\}_{i=1}^{N}$ on the projective line is described by the maps
 $\widetilde{T}_i:\xi\to\xi$
\begin{equation}\label{c33}
  \widetilde{T}_i(\widetilde{z}):=\frac{1}{c_i} \cdot T_i \cdot\left(
                                                                      \begin{array}{c}
                                                                        1 \\
                                                                        z \\
                                                                      \end{array}
                                                                    \right),
\end{equation}
where $\widetilde{z}\in\xi$ is $\widetilde{z}=(1,z)$.

Then
\begin{equation}\label{k72}
  \Pi_{AF}(\mathbf{i})=\widetilde{T}_i\left(\Pi_{AF}(\sigma\mathbf{i})\right)
\end{equation}
is the natural projection for $\left\{\widetilde{T}_i\right\}_{i=1}^{N}$.

As a similar construction have first appeared in \cite{FurstKif}, we will call the projection under $\Pi_{AF}$ of an ergodic measure $\eta$ defined on $\Sigma$ the {\it Furstenberg measure} corresponding to $\eta$.

\subsection{The transversality condition}

Consider two cylinders $S_{\mathbf{i}|_n}[0,1]^2$ and
$S_{\mathbf{j}|_n}[0,1]^2$, $i_1\ne j_1$. They are parallelograms with two  vertical sides. Their angle can be defined as the angle between their non-vertical sides.
The following condition holds if any two cylinders $S_{\mathbf{i}|_n}[0,1]^2$ and
$S_{\mathbf{j}|_n}[0,1]^2$, $i_1\ne j_1$ are either disjoint or have an angle uniformly separated from zero.
\begin{definition}\label{k59}
 We say that  $\mathcal{S}$ satisfies the transversality condition if is there exists a $c_3>0$ such that for every $n$ and for every $\mathbf{i},\mathbf{j}\in\Sigma$ with $i_1\ne j_1$
        we have
        \begin{equation}\label{k3}
          \mathrm{proj}_x\left(S_{\mathbf{i}|_n}[0,1]^2\cap
          S_{\mathbf{j}|_n}[0,1]^2
          \right)<c_3 \cdot b^n.
        \end{equation}
\end{definition}
Below we present a natural geometric interpretation of this condition which provides a method to check it.
First let us define the IFS $\widetilde{\mathcal{S}}$ which acts on $\mathbb{R}^3$
\begin{equation}\label{j36}
  \widetilde{\mathcal{S}}:=\left\{
\widetilde{S}_i(x,y,z):=\left(S_i(x,y),\widetilde{T}_i(z)
\right)\right\}_{i=1}^{N}.
\end{equation}

Now we recall two separation properties of IFS.
For an IFS
  $\{F_i\}$  we say that it satisfies the Strong Separation Property (SSP) if its natural projection (in case of $\widetilde{\mathcal{S}}$ it is given by $\mathbf{i}\mapsto(\Pi(\mathbf{i}),\Pi_{AF}(\mathbf{i}))$) is a bijection.
This is equivalent to the existence of a non-empty open set $V$ satisfying
\begin{description}
  \item[(a)] $F_i(V)\subset {V}$ for all $i=1, \dots ,N$ and
  \item[(b)] $\overline{F_i(V)}\cap \overline{F_j(V)}=\emptyset$ for all $i\ne j$,
\end{description}
where $\overline{A}$ means the closure of the set $A$.

We can define the Open Set Condition (OSC) in an analogous way: an IFS $\{F_i\}$  satisfies OSC if there exists a non-empty open set $V$ satisfying
\begin{description}
  \item[(a)] $F_i(V)\subset {V}$ for all $i=1, \dots ,N$ and
  \item[(b')] ${F_i(V)}\cap {F_j(V)}=\emptyset$ for all $i\ne j$,
\end{description}

\begin{lemma}\label{k73}\
  \begin{description}
    \item[(i)] If $\widetilde{\mathcal{S}}$ satisfies the SSP then the transversality condition holds for ${\mathcal{S}}$.
    \item[(ii)] If the transversality condition holds for ${\mathcal{S}}$  and $F_i([0,1]^2)\subset(0,1)^2$ for all $i=1,\dots,N$ then $\widetilde{\mathcal{S}}$ satisfies SSP.
  \end{description}
\end{lemma}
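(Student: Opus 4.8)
The plan is to translate both implications into one elementary planar estimate about parallelograms, and to let the third coordinate of $\widetilde{\mathcal{S}}$ bookkeep their slopes. A direct computation with \eqref{k69} gives $\widetilde{T}_{\mathbf{i}|_n}(z)=(b/c)^n z+\theta_{\mathbf{i}|_n}$, where $\theta_{\mathbf{i}|_n}:=\sum_{k=1}^n d_{i_k}b^{k-1}c^{-k}$; thus $\theta_{\mathbf{i}|_n}\to\Pi_{AF}(\mathbf{i})$ with $|\theta_{\mathbf{i}|_n}-\Pi_{AF}(\mathbf{i})|\le C(b/c)^n$ for $C:=\max_i|d_i|/(c-b)$. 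Geometrically $S_{\mathbf{i}|_n}[0,1]^2$ is a parallelogram with vertical sides of length $b^n$, horizontal width $c^n$, and non-vertical edges of slope $\theta_{\mathbf{i}|_n}$ (which stays in a fixed bounded interval, so the parallelogram has diameter $\le C''c^n$). Writing the two parallelograms as graphs over the $x$-axis, the fibre over $x$ of their intersection is nonempty exactly when $|(\theta_{\mathbf{i}|_n}-\theta_{\mathbf{j}|_n})x+\mathrm{const}|\le b^n$; hence
\[
  \mathrm{proj}_x\!\left(S_{\mathbf{i}|_n}[0,1]^2\cap S_{\mathbf{j}|_n}[0,1]^2\right)\le \frac{2b^n}{|\theta_{\mathbf{i}|_n}-\theta_{\mathbf{j}|_n}|},
\]
and, conversely, if the two parallelograms share a point lying at relative distance $\ge\rho$ from every edge of each, the same computation forces the left-hand side to be $\ge c_4\min\{c^n,\,\rho b^n/|\theta_{\mathbf{i}|_n}-\theta_{\mathbf{j}|_n}|\}$. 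Everything reduces to controlling $|\theta_{\mathbf{i}|_n}-\theta_{\mathbf{j}|_n}|$.

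For (i), SSP makes the natural projection injective, so the $N$ compact sets $\widetilde{S}_i(\widetilde\Lambda)$ (where $\widetilde\Lambda$ is the attractor of $\widetilde{\mathcal{S}}$) are pairwise disjoint and therefore $\varepsilon$-separated for some $\varepsilon>0$. Fix $\mathbf{i},\mathbf{j}$ with $i_1\ne j_1$ and suppose the two level-$n$ parallelograms meet. Since each has diameter $\le C''c^n$, the points $\Pi(\mathbf{i}),\Pi(\mathbf{j})$, lying in the respective parallelograms, satisfy $\|\Pi(\mathbf{i})-\Pi(\mathbf{j})\|\le 2C''c^n$. As $(\Pi(\mathbf{i}),\Pi_{AF}(\mathbf{i}))\in\widetilde{S}_{i_1}(\widetilde\Lambda)$ and $(\Pi(\mathbf{j}),\Pi_{AF}(\mathbf{j}))\in\widetilde{S}_{j_1}(\widetilde\Lambda)$ are $\varepsilon$-apart, for $n$ large enough that $2C''c^n<\varepsilon/2$ the separation must sit in the last coordinate: $|\Pi_{AF}(\mathbf{i})-\Pi_{AF}(\mathbf{j})|\ge\varepsilon/2$, whence $|\theta_{\mathbf{i}|_n}-\theta_{\mathbf{j}|_n}|\ge\varepsilon/4$. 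The first displayed estimate then gives \eqref{k3} with a suitable $c_3$; the finitely many remaining $n$ are absorbed by enlarging $c_3$.

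For (ii), suppose the natural projection is not injective. Shifting by the common initial block (which conjugates $\Pi$ and $\Pi_{AF}$ equivariantly through \eqref{k72}) we may assume there are $\mathbf{i},\mathbf{j}$ with $i_1\ne j_1$, $\Pi(\mathbf{i})=\Pi(\mathbf{j})=:p$ and $\Pi_{AF}(\mathbf{i})=\Pi_{AF}(\mathbf{j})$. The latter equality forces $|\theta_{\mathbf{i}|_n}-\theta_{\mathbf{j}|_n}|\le 2C(b/c)^n$, i.e. the two level-$n$ parallelograms are almost parallel. The hypothesis $S_k([0,1]^2)\subset(0,1)^2$ yields $\rho>0$ with $\Lambda\subset[\rho,1-\rho]^2$; since $p=S_{\mathbf{i}|_n}(\Pi(\sigma^n\mathbf{i}))$ and $\Pi(\sigma^n\mathbf{i})\in\Lambda$, the point $p$ lies at relative distance $\ge\rho$ from every edge of $S_{\mathbf{i}|_n}[0,1]^2$, and likewise for $S_{\mathbf{j}|_n}[0,1]^2$. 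The converse geometric estimate then gives $\mathrm{proj}_x(\cdots)\ge c_4\min\{c^n,\ \rho b^n/(2C(b/c)^n)\}=c_4' c^n$. Comparing with \eqref{k3} yields $(c/b)^n\le c_3/c_4'$, impossible for large $n$ since $c>b$. Hence the projection is injective and $\widetilde{\mathcal{S}}$ satisfies SSP.

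The crux in both parts is the mismatch between the full squares $[0,1]^2$ appearing in Definition \ref{k59} and the attractor carrying the separation information. In (i) it dissolves because level-$n$ parallelograms have diameter $O(c^n)\to0$, so two that intersect automatically pull their attractor pieces together and thereby route the separation guaranteed by SSP into the slope coordinate. In (ii) it is exactly the interior hypothesis that places the coincidence point $p$ with a definite $\rho$-margin inside both parallelograms, which is what upgrades near-parallelism into a macroscopic horizontal overlap of order $c^n$. The only genuinely technical point is to make the two-sided parallelogram estimate uniform in $n,\mathbf{i},\mathbf{j}$, i.e. to track the constants $C,C'',c_4,\rho$ independently of the words.
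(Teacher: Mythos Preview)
Your proof is correct. Part (i) is essentially identical to the paper's argument: both use that SSP gives a uniform $\varepsilon>0$ such that for $i_1\ne j_1$ either $\Pi(\mathbf{i}),\Pi(\mathbf{j})$ are $\varepsilon$-apart (so the level-$n$ parallelograms are eventually disjoint) or $\Pi_{AF}(\mathbf{i}),\Pi_{AF}(\mathbf{j})$ are $\varepsilon$-apart (so the slopes are uniformly separated and the overlap is $O(b^n)$).

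For part (ii) the two proofs diverge. You argue by contradiction: if the natural projection of $\widetilde{\mathcal{S}}$ fails to be injective, shift off the common prefix to get $\mathbf{i},\mathbf{j}$ with $i_1\ne j_1$, coincident $\Pi$-images, and coincident $\Pi_{AF}$-images; the last forces $|\theta_{\mathbf{i}|_n}-\theta_{\mathbf{j}|_n}|\le 2C(b/c)^n$, and your two-sided parallelogram estimate together with the interior hypothesis then yields $\mathrm{proj}_x(S_{\mathbf{i}|_n}[0,1]^2\cap S_{\mathbf{j}|_n}[0,1]^2)\ge c_4'c^n$, contradicting \eqref{k3}. The paper instead works constructively: it fixes a large $n$, shows that for any two level-$n$ words with different first symbols either the shrunk parallelograms $S_{\cdot}[\ell/2,1-\ell/2]^2$ are disjoint or transversality forces a definite angle between the principal axes, and then exhibits an explicit open set $V=\bigcup_{|\mathbf{k}|=n-1}\widetilde{S}_{\mathbf{k}}((\ell,1-\ell)^2\times I^o)$ witnessing SSP. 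Your route is shorter and uses the paper's own definition of SSP (injectivity of the natural projection) directly; the paper's route has the advantage of producing the open set explicitly, which is what one needs if one wants the equivalent formulation with a witness $V$.
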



\begin{proof}First we prove part  \textbf{(i)}.
We are going to work with long and thin parallelograms. We will call {\it the principal axis} of a parallelogram the direction of its long side.

When $\mathbf{i}\mapsto(\Pi(\mathbf{i}),\Pi_{AF}(\mathbf{i}))$) is a bijection, the usual compactness argument shows that there exists $\ell>0$ such that for any two symbolic sequences $\mathbf{i}, \mathbf{j}\in \Sigma$ with $i_1\neq j_1$ either
\[
\mathrm{dist}
(\Pi(\mathbf{i}), \Pi(\mathbf{j})) > \ell
\]
or
\[
\mathrm{dist}(\Pi_{AF}(\mathbf{i}),\Pi_{AF}(\mathbf{j})) > \ell.
\]
Take some $N > 2(-\log \ell)/\min(-\log c,-\log (b/c))$.
In the first case, for $n>N$ the parallelograms $S_{\mathbf{i}|_n}[0,1]^2, S_{\mathbf{j}|_n}[0,1]^2$ do not intersect at all. In the second case, they intersect but transversally (the angle between their their principal axes is larger than $\ell/2$). In both cases \eqref{k3} holds. This proves the assertion \textbf{(i)}.

\medskip

Now we prove  part  \textbf{(ii)}.
Assume now that \eqref{k3} holds. By the assumption, there exists $\ell>0$ such that $\Lambda \in (\ell, 1-\ell)^2$. Fix some large $n$, to be defined later. Fix also some interval $I$ such that $\widetilde{T}_i(I)\subset I$ for all $i$.

Consider two words $\mathbf{i}|_n, \mathbf{j}|_n$ with different first digits $i_1\neq j_1$. Assume for the moment that the parallelograms $S_{\mathbf{i}|_n}[\ell/2,1-\ell/2]^2, S_{\mathbf{j}|_n}[\ell/2,1-\ell/2]^2$ intersect each other. Then the parallelogram $S_{\mathbf{i}|_n}[0,1]^2$ must intersect an internal point of parallelogram $S_{\mathbf{j}|_n}[0,1]^2$,
a point which is in distance at least $\ell b^n/2$ from the vertical boundary and in distance at least $\ell c^n/2$ from the horizontal (non-vertical) boundary of $S_{\mathbf{j}|_n}[0,1]^2$. Hence, if \eqref{k3} holds then the angle between the principal axes of the parallelograms is at least $c_4=\ell/2c_3$. In particular, if $n$ was so large that $(b/c)^n |I| < c_4/2$ then
\begin{equation} \label{someeq}
\widetilde{S}_{\mathbf{i}|_n}([\ell,1-\ell]^2\times I) \cap \widetilde{S}_{\mathbf{j}|_n}([\ell,1-\ell]^2\times I) = \emptyset.
\end{equation}

On the other hand, when the parallelograms $S_{\mathbf{i}|_n}[\ell/2,1-\ell/2]^2, S_{\mathbf{j}|_n}[\ell/2,1-\ell/2]^2$ do not intersect each other then \eqref{someeq} also holds. Hence, it holds for all pairs of words $\mathbf{i}|_n, \mathbf{j}|_n$ with different first digits. This implies that $\widetilde{S}$ satisfies SSP for the set
\[
V = \bigcup_{\mathbf{k}|_{n-1} \in \{1,\ldots,N\}^{n-1}} \widetilde{S}_{\mathbf{k}|_{n-1}}((\ell,1-\ell)^2\times I^o).
\]

This is complete the proof of  part \textbf{(ii)} of the assertion.
\end{proof}


\subsection{Lyapunov exponents, Lyapunov dimension, projection entropy and exponential separation condition}

\begin{notation}\label{k76}
 Let
$\Psi=\left\{\psi_i\right\}_{i=1}^{N}$ be strictly contracting IFS on $\mathbb{R}^d$.  Let $\Sigma=\left\{1,\dots,N\right\}^{\mathbb{N}}$ be the  symbolic space, $\sigma$ the  left-shift operator on $\Sigma$ and
the natural projection is
  $\Pi(i_1,i_2,\dots)=\lim_{n\rightarrow\infty}\psi_{i_1}\circ\cdots\circ\psi_{i_n}(\underline{0})$.
Let $\mathbf{p}:=(p_1, \dots ,p_N)$ be a probability vector.
  This generates a Bernoulli measure  on $\Sigma$, which is denoted by
$\mathbb{P}_{\mathbf{p}}:=\left\{p_1,\dots,p_N\right\}^{\mathbb{N}}$.
The corresponding self-similar measure is
\begin{equation}\label{k81}
  \nu_{\Psi,\mathbf{p}}:=\Pi_*\mathbb{P}_{\mathbf{p}}
\end{equation}
Finally, for every $n \geq 1$ we put
\begin{equation}\label{j98}
  \Sigma_n:=\left\{1, \dots ,N\right\}^n \mbox{ and }
  \Sigma_*:=\bigcup\limits_{n=1}^{\infty }\Sigma_n.
\end{equation}
\end{notation}

\subsubsection{Lyapunov exponents and Lyapunov dimension}
The general definition of Lyapunov exponents can be found e.g. in \cite{WaltersBook}.
However, in the special case of systems generated by lower triangular matrices like in \eqref{k16} the vertical direction is preserved by the system. Hence,
 the Lyapunov exponents can be expressed by the diagonal elements only \cite{falconer2007dimensions}.

The Lyapunov dimension of a self-affine measure was introduced in \cite[Definition 1.6]{barany2012stationary}. It is always an upper bound on the Hausdorff dimension of the measure.
In the special cases we consider in this note, we can write down a simpler formula for the Lyapunov dimension
of $\nu_{\Psi,\mathbf{p}}$:

\begin{example}

\begin{description}
  \item[(a)] If $\Psi$ is a self-similar IFS on the line
then $\nu_{\Psi,\mathbf{p}}$ has one Lyapunov exponent which is defined as
\begin{equation}\label{k74}
  \chi_{\Psi,\mathbf{p}}:=-\sum\limits_{i=1}^{N}p_i \cdot \log|\psi'(0)|.
\end{equation}
  \item[(b)] Assume that $\Psi=\mathcal{S}^{\mathrm{diag}}$ is a diagonally self-affine IFS on the plane defined as in \eqref{k53}.  Then its   horizontal and vertical
projections of   are  self-similar IFSs $\mathcal{H}$ and $\mathcal{V}$ on the line of the form \eqref{k58}. In this case, $\nu_{\Psi,\mathbf{p}}$  has two (not necessarily distinct)
 Lyapunov exponents: $\chi_{\mathcal{H},\mathbf{p}}$ and
$\chi_{\mathcal{V},\mathbf{p}}$. We say that direction-$x$ dominates if the following counter intuitive condition holds:
\begin{equation}\label{k89}
  \chi_{\mathcal{H},\mathbf{p}} \leq \chi_{\mathcal{V},\mathbf{p}}
\end{equation}
Assuming for example that direction-$x$ dominates, the Lyapunov dimension can be expressed as
\begin{equation}\label{k90}
  D(\nu_{\Psi,\mathbf{p}})=
\left\{
  \begin{array}{ll}
    \frac{h(\mathbb{P}_{\mathbf{p}})}{\chi_{\mathcal{H},\mathbf{p}}}, & \hbox{if ;
$h(\mathbb{P}_{\mathbf{p}}) \leq \chi_{\mathcal{H},\mathbf{p}}$} \\
  \frac{h(\mathbb{P}_{\mathbf{p}})+\chi_{\mathcal{V},\mathbf{p}}-\chi_{\mathcal{H},\mathbf{p}}}{\chi_{\mathcal{V},\mathbf{p}}}  , & \hbox{if; $\chi_{\mathcal{H},\mathbf{p}}<h(\mathbb{P}_{\mathbf{p}}) \leq
\chi_{\mathcal{H},\mathbf{p}}+\chi_{\mathcal{V},\mathbf{p}}
$}\\
  2\frac{h(\mathbb{P}_{\mathbf{p}})}{\chi_{\mathcal{H},\mathbf{p}}+\chi_{\mathcal{V},\mathbf{p}}} , & \hbox{otherwise.}
  \end{array}
\right.
\end{equation}

\item[(c)] Observe that the formulas \eqref{k90} depend only on the Lyapunov exponents and the entropy.
Hence,
in the general lower triangular case  when $\Psi=\mathcal{S}$   given by the formula \eqref{k16}, the Lyapunov dimension depends only on the diagonal elements of the matrices, so
the equation \eqref{k90} still holds.

\end{description}

\end{example}


\subsubsection{Projection entropy}\label{k75}
  We recall the definition
of the projective entropy, which was introduced by Feng and Hu \cite{feng2009dimension}.

Here we use  Notation \ref{k76}. That is
let $\Psi=\left\{\psi_i\right\}_{i=1}^N$ be a strictly contracting IFS on $\mathbb{R}^d$.  Moreover, let $\mathfrak{m}$ be a $\sigma$-invariant ergodic measure on $\Sigma$.
 Let $\mathcal{P}=\left\{[1],\dots,[N]\right\}$ be the partition of $\Sigma$, where $[i]=\left\{\mathbf{i}\in\Sigma:i_1=i\right\}$ and we write $\mathcal{B}$ for the Borel $\sigma$-algebra of $\mathbb{R}^d$.
Feng and Hu \cite[Definition~2.1]{feng2009dimension}
defined the {\em projection entropy of $\mathfrak{m}$ under $\Pi$ with respect to $\Psi$}
\begin{equation*}
h_{\Pi}(\mathfrak{m}):=H_{\mathfrak{m}}(\mathcal{P}\mid\sigma^{-1}\Pi^{-1}\mathcal{B})-
H_{\mathfrak{m}}(\mathcal{P}\mid\Pi^{-1}\mathcal{B}),
\end{equation*}
where $H_{\mathfrak{m}}(\xi\mid\eta)$ denotes the usual conditional entropy of $\xi$ given $\eta$.

\subsubsection{Hochman' exponential separation condition}
Hochman introduced the following Diophantine-type condition in \cite{hochman2012self}.

Let $\Psi=\left\{\psi_i\right\}_{i=1}^N$ be a self-similar IFS on $\mathbb{R}$.
Let $\Delta_n(\psi)$ be the minimum of $\Delta(\il,\jl)$ for distinct
 $\il,\jl\in\Sigma_n$, where
\[
\Delta(\il,\jl)=\left\{\begin{array}{cc}
             \infty & \psi_{\il}'(0)\neq \psi_{\jl}'(0) \\
             \left|\psi_{\il}(0)-\psi_{\jl}(0)\right| & \psi_{\il}'(0)=\psi_{\jl}'(0).
           \end{array}\right.
\]

\begin{condition*}
We say that the self-similar IFS $\psi$ satisfies \underline{Hochman's exponential separation condition} if there exists an $\varepsilon>0$ and an $n_k\uparrow \infty $ such that
\begin{equation}\label{k78}
  \Delta_{n_k}>\varepsilon^{n_k}.
\end{equation}
\end{condition*}

\bk{We remark that in our earlier paper \cite[p.2]{barany2016dimension} we stated this condition in an unnecessarily strict way. Namely, in \cite[Condition]{barany2016dimension} we required
that \eqref{k78} holds for all elements of the sequence $\left\{\Delta_n\right\}_{n=1}^{\infty }$, while it is enough  that  this inequality holds only on a subsequence as stated above. However, all the assertions of \cite{barany2016dimension} remain valid under the weaker condition \eqref{k78} since in \cite{barany2016dimension} we never used the condition $\Delta_n>\varepsilon^n$ directly, we used only the conclusions of Hochman's Theorems \cite{hochman2012self}.}
\section{Theorems of Hochman and Feng, Hu}

Hochman proved the following very important assertion in \cite[Theorem~1.1]{hochman2012self}.
\begin{theorem}[Hochman]\label{k79}
	Here we use Notation  \ref{k76}.
Let $\Psi=\left\{\psi_i\right\}_{i=1}^{N}$ be a self-similar IFS  on the real line. Assume that $\Psi$ satisfies the Hochman's exponential separation condition. Let $\mathbf{p}=(p_1, \dots ,p_N)$ be an arbitrary probability vector.

Then
\begin{equation}\label{k80}
  \dim_H\left(\nu_{\Psi,\mathbf{p}}\right)=\min\left\{1,
\frac{h(\mathbb{P_{\mathbf{p}}})}{\chi_{\Psi,\mathbf{p}}}\right\},
\end{equation}
 where $h(\mathbb{P_{\mathbf{p}}})=-\sum_{i=1}^Mp_i\log p_i$.
\end{theorem}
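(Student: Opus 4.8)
The plan is to prove the upper and lower bounds for \eqref{k80} separately, with the lower bound carrying all the weight. For the upper bound I would first record the two trivial-to-state constraints: $\dim_H$ of any measure on $\mathbb{R}$ is at most $1$, and the similarity (Lyapunov) dimension $h(\mathbb{P}_{\mathbf{p}})/\chi_{\Psi,\mathbf{p}}$ is always an upper bound regardless of separation. The latter follows from the strong law of large numbers: for $\mathbb{P}_{\mathbf{p}}$-a.e.\ $\mathbf{i}$ one has $\frac1n\log p_{\mathbf{i}|_n}\to -h(\mathbb{P}_{\mathbf{p}})$ and $\frac1n\log|\psi_{\mathbf{i}|_n}'(0)|\to-\chi_{\Psi,\mathbf{p}}$, so a Billingsley-type estimate of the local dimension of $\nu_{\Psi,\mathbf{p}}$ on the natural cylinder covering gives $\dim_H\nu_{\Psi,\mathbf{p}}\le h/\chi$. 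Hochman's exponential separation condition is needed only to produce the matching lower bound.

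For the lower bound I would work with dyadic-scale entropy. Writing $\mathcal{D}_m$ for the partition of $\mathbb{R}$ into intervals of length $2^{-m}$ and $H(\cdot,\mathcal{D}_m)$ for Shannon entropy in base-$2$ logarithms, the starting point is that self-similar measures are exact-dimensional (Feng--Hu), so $\dim_H\nu_{\Psi,\mathbf{p}}=\lim_m \frac1m H(\nu_{\Psi,\mathbf{p}},\mathcal{D}_m)$. The self-similarity $\nu_{\Psi,\mathbf{p}}=\sum_{\mathbf{i}\in\Sigma_n}p_{\mathbf{i}}\,(\psi_{\mathbf{i}})_*\nu_{\Psi,\mathbf{p}}$ lets me factor $\nu_{\Psi,\mathbf{p}}$, up to a controlled error, as a convolution of the atomic measure $\nu^{(n)}$ assigning mass $p_{\mathbf{i}}$ to $\psi_{\mathbf{i}}(0)$, $\mathbf{i}\in\Sigma_n$, with a rescaled copy of $\nu_{\Psi,\mathbf{p}}$ of diameter $\approx 2^{-m}$, where $m\approx n\chi_{\Psi,\mathbf{p}}/\log 2$ so that a typical contraction $|\psi_{\mathbf{i}}'(0)|\approx e^{-n\chi_{\Psi,\mathbf{p}}}$ matches the scale $2^{-m}$. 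Since the rescaled copy sits inside $O(1)$ cells of $\mathcal{D}_m$, one gets $H(\nu_{\Psi,\mathbf{p}},\mathcal{D}_m)=H(\nu^{(n)},\mathcal{D}_m)+O(1)$; thus the rate at which $\nu_{\Psi,\mathbf{p}}$ accumulates dimension is governed by how the atomic approximations $\nu^{(n)}$ spread across scales, and the atomic entropy $H(\nu^{(n)})$ is at most $n\,h(\mathbb{P}_{\mathbf{p}})$, with a deficit measuring precisely the coincidences among the points $\psi_{\mathbf{i}}(0)$.

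The decisive step is to convert a dimension deficit into super-exponential concentration of cylinders, and this is where the real difficulty lies. Suppose toward a contradiction that $\dim_H\nu_{\Psi,\mathbf{p}}<\min\{1,h/\chi\}$. The engine is Hochman's inverse theorem for the growth of entropy under convolution: if convolving with $\nu^{(n)}$ fails to raise the scale-$m$ entropy of the small rescaled copy by the expected amount, then at most of the scales $1\le i\le m$ the measure $\nu^{(n)}$ must be nearly atomic (highly concentrated) while $\nu_{\Psi,\mathbf{p}}$ must look locally near-uniform. Applying this dichotomy simultaneously across a telescoping family of scales and iterating the self-similar relation, a persistent gap between $\frac1m H(\nu_{\Psi,\mathbf{p}},\mathcal{D}_m)$ and $\min\{1,h/\chi\}$ forces the atoms of $\nu^{(n)}$ to collapse onto one another at finer and finer scales, i.e.\ $\Delta(\il,\jl)$ for distinct $\il,\jl\in\Sigma_n$ must decay faster than every exponential $\varepsilon^n$. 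This contradicts \eqref{k78}, which supplies $\Delta_{n_k}>\varepsilon^{n_k}$ along a subsequence, so no deficit is possible and $\dim_H\nu_{\Psi,\mathbf{p}}\ge\min\{1,h/\chi\}$. In the saturated regime $h/\chi>1$ the same mechanism shows the scale-$m$ entropy keeps growing at the maximal rate until it reaches the ambient bound $1$, which is why the minimum with $1$ appears.

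I expect the main obstacle to be precisely this inverse (entropy-growth) theorem together with the multiscale bookkeeping needed to run it over all scales $1\le i\le m$ at once: proving that near-equality in the convolution entropy inequality enforces the complementary \emph{concentrated-versus-uniform} structure at most scales is the analytic core of the argument, and everything else amounts to feeding the self-similar convolution structure of $\nu_{\Psi,\mathbf{p}}$ into it and translating concentration back into the arithmetic statement about $\Delta_{n}$ that the exponential separation condition forbids.
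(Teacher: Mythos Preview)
The paper does not prove Theorem~\ref{k79} at all: it is quoted verbatim as \cite[Theorem~1.1]{hochman2012self} and used as a black box, so there is no ``paper's own proof'' to compare against. Your proposal is, in outline, a faithful sketch of Hochman's original argument (exact dimensionality via Feng--Hu, the self-convolution decomposition $\nu=\nu^{(n)}*(\text{rescaled }\nu)$ at matched scales, and the inverse theorem for entropy growth under convolution forcing super-exponential concentration of the atoms $\psi_{\mathbf{i}}(0)$, contradicting~\eqref{k78}). That is the right architecture, and you have correctly located the hard core in the inverse theorem and the multiscale bookkeeping; but since the present paper simply imports the result, no comparison is possible beyond noting that your write-up supplies what the paper deliberately omits.
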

The ratio on the right hand side of \eqref{k80} is the similarity dimension of $\nu_{\Psi,\mathbf{p}}$. That is
\begin{equation}\label{k82}
  \dim_{\rm S}\left(\nu_{\Psi,\mathbf{p}}\right):=
 \frac{h(\mathbb{P_{\mathbf{p}}})}{\chi_{\Psi,\mathbf{p}}}.
\end{equation}

\begin{theorem}\cite[Theorem~2.8]{feng2009dimension}\label{k83}
Here we use Notation \ref{k76} and  notation from \bk{Section} \ref{k75}. Let $\Psi$ be a self-similar IFS on the line and $\mathbf{p}$ be a probability vector. Then
\begin{equation}\label{k77}
  \dim_H(\nu_{\Psi,\mathbf{p}})=\frac{h_{\Pi}(\mathbb{P_{\mathbf{p}}})}{\chi_{\Psi,\mathbf{p}}}.
\end{equation}
\end{theorem}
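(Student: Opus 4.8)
The plan is to show that $\nu:=\nu_{\Psi,\mathbf{p}}$ is exact dimensional with $\nu$-almost everywhere constant local dimension equal to $h_{\Pi}(\mathbb{P}_{\mathbf{p}})/\chi_{\Psi,\mathbf{p}}$; since for an exact dimensional measure this common value is its Hausdorff dimension, \eqref{k77} follows. Write $\psi_i(x)=r_ix+a_i$ with $0<|r_i|<1$ and $\psi_{\mathbf{i}|_n}:=\psi_{i_1}\circ\cdots\circ\psi_{i_n}$, so that $\prod_{k=1}^{n}|r_{i_k}|$ is, up to a bounded factor, the diameter of the $n$-th level cylinder and $\chi_{\Psi,\mathbf{p}}=-\sum_i p_i\log|r_i|$. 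For each $x=\Pi(\mathbf{i})$ and each small $r>0$ I would introduce the stopping time $n=n(\mathbf{i},r)$, the least integer with $\prod_{k=1}^{n}|r_{i_k}|\le r$. Applying Birkhoff's ergodic theorem to the function $\mathbf{i}\mapsto-\log|r_{i_1}|$ gives $n(\mathbf{i},r)\sim(-\log r)/\chi_{\Psi,\mathbf{p}}$ as $r\to0$ for $\mathbb{P}_{\mathbf{p}}$-a.e.\ $\mathbf{i}$.

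The core of the argument is the local dimension identity
\[
\lim_{r\to0}\frac{\log\nu(B(x,r))}{\log r}=\frac{h_{\Pi}(\mathbb{P}_{\mathbf{p}})}{\chi_{\Psi,\mathbf{p}}}\qquad\text{for }\nu\text{-a.e. }x.
\]
Were the open set condition to hold, the cylinders would not overlap, $\nu(B(x,r))$ would be comparable to $\mathbb{P}_{\mathbf{p}}([\mathbf{i}|_{n}])=\prod_{k=1}^{n}p_{i_k}$, and dividing by $\log r$ while applying Birkhoff to $\mathbf{i}\mapsto-\log p_{i_1}$ would yield $h(\mathbb{P}_{\mathbf{p}})/\chi_{\Psi,\mathbf{p}}$ with $h$ the ordinary Bernoulli entropy. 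In the genuinely overlapping case many length-$n$ cylinders may accumulate inside $B(x,r)$, so $\mathbb{P}_{\mathbf{p}}([\mathbf{i}|_n])$ systematically undercounts $\nu(B(x,r))$; the correct numerator is the projection entropy $h_{\Pi}(\mathbb{P}_{\mathbf{p}})$, which by its definition in Section~\ref{k75} already subtracts the entropy dissolved into these overlaps via the conditioning on $\Pi^{-1}\mathcal{B}$.

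To make this rigorous I would disintegrate $\mathbb{P}_{\mathbf{p}}$ along the fibers of $\Pi$, i.e.\ pass to the conditional measures with respect to $\Pi^{-1}\mathcal{B}$. In the definition of $h_{\Pi}$ the two conditional entropies $H_{\mathbb{P}_{\mathbf{p}}}(\mathcal{P}\mid\sigma^{-1}\Pi^{-1}\mathcal{B})$ and $H_{\mathbb{P}_{\mathbf{p}}}(\mathcal{P}\mid\Pi^{-1}\mathcal{B})$ measure the residual uncertainty of the first symbol $i_1$ given, respectively, the projected image of $\sigma\mathbf{i}$ and of $\mathbf{i}$ itself; their difference is the entropy genuinely produced at one step after overlaps are quotiented out. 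A generalized Rokhlin-type identity, combined with a martingale / Shannon--McMillan--Breiman argument for conditional information functions, then promotes this one-step quantity to the asymptotic rate
\[
\lim_{n\to\infty}\frac1n\,I_{\mathbb{P}_{\mathbf{p}}}\Big(\bigvee_{k=0}^{n-1}\sigma^{-k}\mathcal{P}\ \Big|\ \Pi^{-1}\mathcal{B}\Big)(\mathbf{i})=h_{\Pi}(\mathbb{P}_{\mathbf{p}})\qquad\text{for }\mathbb{P}_{\mathbf{p}}\text{-a.e. }\mathbf{i},
\]
where $I_{\mathbb{P}_{\mathbf{p}}}(\cdot\mid\cdot)$ denotes the conditional information function. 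Feeding $n=n(\mathbf{i},r)\sim(-\log r)/\chi_{\Psi,\mathbf{p}}$ into this rate and comparing with $\log\nu(B(x,r))$ then yields the local dimension identity.

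The step I expect to be the main obstacle is the geometric comparison between $\nu(B(x,r))$ and the conditional cylinder measures at the stopping time $n(\mathbf{i},r)$: one must control how many and which distinct length-$n$ cylinders of diameter $\asymp r$ overlap the ball $B(x,r)$, and prove that this overlap multiplicity is asymptotically governed exactly by the conditioning on $\Pi^{-1}\mathcal{B}$ rather than by some finer, scale-dependent structure. Establishing this, which is tantamount to proving exact dimensionality of $\nu$, is where Feng and Hu's analysis of the conditional measures along the fibers of $\Pi$ carries the real weight of the proof.
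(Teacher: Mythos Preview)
The paper does not contain a proof of this theorem: it is quoted verbatim from \cite[Theorem~2.8]{feng2009dimension} and used as a black box (together with Hochman's theorem) to derive Corollary~\ref{k84}. So there is no ``paper's own proof'' to compare your proposal against.

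Your sketch is a faithful high-level outline of the Feng--Hu strategy: prove exact dimensionality of $\nu$ by combining (i) a Birkhoff/stopping-time argument to relate scales $r$ to word lengths $n$, (ii) a conditional Shannon--McMillan--Breiman theorem for the information function $I_{\mathbb{P}_{\mathbf{p}}}(\bigvee_{k=0}^{n-1}\sigma^{-k}\mathcal{P}\mid\Pi^{-1}\mathcal{B})$, and (iii) a geometric comparison between $\nu(B(x,r))$ and these conditional cylinder masses. You correctly flag that step (iii) is where the real work lies, and indeed you do not carry it out; what you have written is an accurate road map rather than a proof. If you intend this as a self-contained argument, the gap is exactly the one you name: controlling the overlap multiplicity and showing it is captured by the conditioning on $\Pi^{-1}\mathcal{B}$ requires the density-point and maximal-inequality machinery developed in \cite{feng2009dimension}, none of which appears here.
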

If we put Theorem \ref{k79} together with Theorem \ref{k83} we obtain

\begin{corollary}\label{k84}
  Let $\Psi$ be a self-similar IFS on the line which satisfies the Hochman's exponential separation condition. Then $h_{\Pi}(\mathbb{P_{\mathbf{p}}})=\min\{h(\mathbb{P_{\mathbf{p}}}),\chi_{\Psi,\mathbf{p}}\}$.
\end{corollary}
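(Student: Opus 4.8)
The plan is to combine the two displayed theorems directly. Corollary~\ref{k84} asserts that for a self-similar IFS $\Psi$ on the line satisfying Hochman's exponential separation condition, the projection entropy $h_\Pi(\mathbb{P}_{\mathbf{p}})$ equals $\min\{h(\mathbb{P}_{\mathbf{p}}),\chi_{\Psi,\mathbf{p}}\}$. The strategy is to write down two independent expressions for the Hausdorff dimension $\dim_H(\nu_{\Psi,\mathbf{p}})$ of the same self-similar measure and then equate them.

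First I would invoke Theorem~\ref{k83} (the Feng--Hu formula), which holds for \emph{any} self-similar IFS on the line with no separation hypothesis, to obtain
\[
\dim_H(\nu_{\Psi,\mathbf{p}})=\frac{h_{\Pi}(\mathbb{P}_{\mathbf{p}})}{\chi_{\Psi,\mathbf{p}}}.
\]
Next, since by hypothesis $\Psi$ satisfies Hochman's exponential separation condition, Theorem~\ref{k79} (Hochman's theorem) applies and gives
\[
\dim_H(\nu_{\Psi,\mathbf{p}})=\min\left\{1,\frac{h(\mathbb{P}_{\mathbf{p}})}{\chi_{\Psi,\mathbf{p}}}\right\}.
\]
Both right-hand sides equal the same left-hand side, so I would set them equal and solve for $h_\Pi(\mathbb{P}_{\mathbf{p}})$.

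Equating and multiplying through by the Lyapunov exponent $\chi_{\Psi,\mathbf{p}}$, which is strictly positive because $\Psi$ is strictly contracting, yields
\[
h_\Pi(\mathbb{P}_{\mathbf{p}})=\chi_{\Psi,\mathbf{p}}\cdot\min\left\{1,\frac{h(\mathbb{P}_{\mathbf{p}})}{\chi_{\Psi,\mathbf{p}}}\right\}=\min\left\{\chi_{\Psi,\mathbf{p}},\,h(\mathbb{P}_{\mathbf{p}})\right\},
\]
where the last equality distributes the positive factor $\chi_{\Psi,\mathbf{p}}$ inside the minimum. This is exactly the claimed identity. The whole argument is a one-line algebraic manipulation once the two cited theorems are in hand, so there is essentially no obstacle to overcome; the only point requiring care is the justification that $\chi_{\Psi,\mathbf{p}}>0$ so that division and multiplication by it are legitimate and the minimum distributes correctly. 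I would note that this positivity is immediate from the strict contractivity of the maps $\psi_i$, which forces $|\psi_i'(0)|<1$ and hence $\chi_{\Psi,\mathbf{p}}=-\sum_i p_i\log|\psi_i'(0)|>0$.
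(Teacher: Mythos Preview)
Your argument is correct and is exactly what the paper intends: it simply says ``If we put Theorem~\ref{k79} together with Theorem~\ref{k83} we obtain'' Corollary~\ref{k84}, and your write-up is the straightforward unpacking of that sentence. The extra remark that $\chi_{\Psi,\mathbf{p}}>0$ is a harmless clarification.
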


Now we consider the diagonally self-affine case on the plane.

\begin{theorem}[Feng Hu]\label{k85}
  Given the diagonally self-affine IFS like in \eqref{k53}. We assume that it satisfies the SSP .
Its horizontal and vertical parts (see \eqref{k58}) are denoted by $\mathcal{H}$ and $\mathcal{V}$.

Fix a  probability vector $\mathbf{p}=(p_1, \dots ,p_N)$. We consider the Lyapunov exponents
$\chi_{\mathcal{H},\mathbf{p}}$ and $\chi_{\mathcal{V},\mathbf{p}}$ as in \eqref{k74}.
Without loss of generality we assume that \underline{direction-$x$ dominates}, which means that
$$0<\chi_{\mathcal{H},\mathbf{p}} \leq \chi_{\mathcal{V},\mathbf{p}}$$ holds.
Then
\begin{equation}\label{k88}
  \dim_{\rm H} (\nu_{\mathcal{S}^{\mathrm{diag}}, \mathbf{p}})
=
\frac{h(\mathbb{P}_{\mathbf{p}})}{\chi_{\mathcal{V},\mathbf{p}}}
+
\left(1-\frac{\chi_{\mathcal{H},\mathbf{p}}}{\chi_{\mathcal{V},\mathbf{p}}}\right) \cdot
\dim_{\rm H} \left(\nu_{\mathcal{H},\mathbf{p}}\right).
\end{equation}
\end{theorem}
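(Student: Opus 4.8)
The plan is to read \eqref{k88} as the planar, two-exponent specialisation of the Ledrappier--Young type dimension formula of Feng and Hu. Since direction-$x$ dominates we have $0<\chi_{\mathcal{H},\mathbf{p}}\le\chi_{\mathcal{V},\mathbf{p}}$, so the slower contracting direction is the horizontal one and the natural flag to disintegrate along is $\mathbb{R}^2\to(x\text{-axis})\to\{\text{pt}\}$. Feng and Hu's general machinery then asserts that $\nu_{\mathcal{S}^{\mathrm{diag}},\mathbf{p}}$ is exact dimensional with
\begin{equation}\label{LY}
\dim_H\left(\nu_{\mathcal{S}^{\mathrm{diag}},\mathbf{p}}\right)=\frac{h_{\Pi_{\mathcal{H}}}(\mathbb{P}_{\mathbf{p}})}{\chi_{\mathcal{H},\mathbf{p}}}+\frac{h_{\Pi}(\mathbb{P}_{\mathbf{p}})-h_{\Pi_{\mathcal{H}}}(\mathbb{P}_{\mathbf{p}})}{\chi_{\mathcal{V},\mathbf{p}}},
\end{equation}
where $h_{\Pi}$ is the projection entropy of the full planar system and $h_{\Pi_{\mathcal{H}}}$ that of its horizontal factor $\mathcal{H}$. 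The first term is the dimension of the projected horizontal measure, while the second is the $\nu_x$-average dimension of the conditional measures on the vertical fibres: it is the excess entropy $h_{\Pi}-h_{\Pi_{\mathcal{H}}}$ not seen by the $x$-projection, spread at the faster vertical rate $\chi_{\mathcal{V},\mathbf{p}}$.

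Granting \eqref{LY}, the proof reduces to identifying the two projection entropies. For the horizontal factor I would simply invoke Theorem~\ref{k83}, which holds for every self-similar IFS on the line with no separation hypothesis, and gives $\dim_H\nu_{\mathcal{H},\mathbf{p}}=h_{\Pi_{\mathcal{H}}}(\mathbb{P}_{\mathbf{p}})/\chi_{\mathcal{H},\mathbf{p}}$, equivalently $h_{\Pi_{\mathcal{H}}}(\mathbb{P}_{\mathbf{p}})=\chi_{\mathcal{H},\mathbf{p}}\cdot\dim_H\nu_{\mathcal{H},\mathbf{p}}$. I deliberately do not use Hochman's Theorem~\ref{k79} here, since the SSP of the planar system need not descend to $\mathcal{H}$: the separation may be provided entirely by the vertical direction. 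For the full system I would use the SSP directly: the natural projection $\Pi$ of the diagonal system is then a bijection, so $\Pi^{-1}\mathcal{B}$ generates the whole Borel $\sigma$-algebra of $\Sigma$ and $H_{\mathbb{P}_{\mathbf{p}}}(\mathcal{P}\mid\Pi^{-1}\mathcal{B})=0$; moreover $\sigma^{-1}\Pi^{-1}\mathcal{B}$ is, modulo $\mathbb{P}_{\mathbf{p}}$-null sets, the $\sigma$-algebra generated by the coordinates $i_2,i_3,\dots$, and under the Bernoulli measure the first coordinate is independent of this tail, whence $H_{\mathbb{P}_{\mathbf{p}}}(\mathcal{P}\mid\sigma^{-1}\Pi^{-1}\mathcal{B})=H_{\mathbb{P}_{\mathbf{p}}}(\mathcal{P})=h(\mathbb{P}_{\mathbf{p}})$. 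Together these give $h_{\Pi}(\mathbb{P}_{\mathbf{p}})=h(\mathbb{P}_{\mathbf{p}})$.

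Substituting the two identities into \eqref{LY} yields
\[
\dim_H\left(\nu_{\mathcal{S}^{\mathrm{diag}},\mathbf{p}}\right)=\dim_H\nu_{\mathcal{H},\mathbf{p}}+\frac{h(\mathbb{P}_{\mathbf{p}})-\chi_{\mathcal{H},\mathbf{p}}\,\dim_H\nu_{\mathcal{H},\mathbf{p}}}{\chi_{\mathcal{V},\mathbf{p}}},
\]
and collecting the two terms carrying $\dim_H\nu_{\mathcal{H},\mathbf{p}}$ produces exactly \eqref{k88}. As a consistency check, when $\mathcal{H}$ additionally satisfies Hochman's condition, so that $\dim_H\nu_{\mathcal{H},\mathbf{p}}=\min\{1,h(\mathbb{P}_{\mathbf{p}})/\chi_{\mathcal{H},\mathbf{p}}\}$, the right-hand side collapses to the Lyapunov dimension \eqref{k90}.

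The genuinely hard input is \eqref{LY} itself: the exact-dimensionality of $\nu_{\mathcal{S}^{\mathrm{diag}},\mathbf{p}}$ together with the additive splitting of its dimension into a base part and a fibre part. This is the substance of Feng and Hu's analysis and rests on a disintegration (conditional-measure) argument along the vertical foliation; reproving it from scratch would require controlling the conditional measures $\nu_x^{\perp}$, showing that their dimension is $\mathbb{P}_{\mathbf{p}}$-a.e.\ equal to $(h_{\Pi}-h_{\Pi_{\mathcal{H}}})/\chi_{\mathcal{V},\mathbf{p}}$, and proving an additivity (slicing) theorem for the dimension. If one cites Feng and Hu for \eqref{LY}, the only remaining delicate point is the clean identity $h_{\Pi}=h(\mathbb{P}_{\mathbf{p}})$, and this is exactly where the SSP hypothesis enters essentially, since it is precisely what prevents loss of entropy in the planar coding.
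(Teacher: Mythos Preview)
The paper does not supply its own proof of Theorem~\ref{k85}; it is stated as a result of Feng and Hu \cite{feng2009dimension} and simply quoted. So there is nothing to compare against directly.

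That said, your derivation is correct and is essentially how one extracts \eqref{k88} from Feng and Hu's general machinery. Your formula \eqref{LY} is the two-dimensional case of \cite[Theorem~2.11]{feng2009dimension} for diagonal systems (with the coordinates ordered by increasing Lyapunov exponent, so that the horizontal projection appears first since $\chi_{\mathcal{H},\mathbf{p}}\le\chi_{\mathcal{V},\mathbf{p}}$). Your two entropy identifications are also right: $h_{\Pi_{\mathcal{H}}}(\mathbb{P}_{\mathbf{p}})=\chi_{\mathcal{H},\mathbf{p}}\dim_H\nu_{\mathcal{H},\mathbf{p}}$ is exactly Theorem~\ref{k83}, and $h_{\Pi}(\mathbb{P}_{\mathbf{p}})=h(\mathbb{P}_{\mathbf{p}})$ under SSP is correct for the reason you give (and is in fact \cite[Theorem~2.8, Proposition~2.3]{feng2009dimension} or can be read off directly from the definition since $\Pi$ is a bijection). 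You were right to avoid invoking Hochman on $\mathcal{H}$, since SSP for the planar system says nothing about separation of the horizontal factor. The final algebra is clean, and the consistency check with \eqref{k90} is a nice touch.
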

Observe that \bk{the} Hausdorff dimension of the measure
$\nu_{\mathcal{S}^{\mathrm{diag}}, \mathbf{p}}$ is equal to its Lyaponov dimension if and only if
$\dim_{\rm H} \left(\nu_{\mathcal{H},\mathbf{p}}\right)=1$.

\section{The trivial case and further notation}\label{k60}
\subsection{The trivial case}
\begin{lemma}\label{k64}
  Assume that direction-$x$ dominates and
$
  \sum\limits_{k=1}^{N}|c_i|<1.
$
Further, we assume that the exponential separation condition holds for the IFS  $\mathcal{H}$ defined in \eqref{k58}. Then $\dim_{\rm H} \Lambda_{\mathcal{S}}=s_x$.
\end{lemma}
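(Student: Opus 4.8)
The plan is to sandwich $\dim_{\rm H}\Lambda_{\mathcal{S}}$ between two copies of $s_x$: an upper bound coming from the affinity dimension and a lower bound coming from the horizontal projection. The starting observation is that the hypothesis $\sum_{i=1}^{N}|c_i|<1$ forces $s_x<1$. Indeed, the map $s\mapsto\sum_i|c_i|^{s}$ is strictly decreasing, and its value at $s=1$ is $\sum_i|c_i|<1$, so the unique zero $s_x$ of $\sum_i|c_i|^{s_x}=1$ lies strictly below $1$; in particular $\widehat{s}_x=\min\{s_x,1\}=s_x$.

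For the upper bound I would first compute the affinity dimension. Substituting $\widehat{s}_x=s_x$ into the defining equation \eqref{k56} for $d_x$ and testing the candidate value $d_x=s_x$, the power of $b_i$ becomes $b_i^{0}=1$, so \eqref{k56} reduces precisely to the similarity-dimension equation $\sum_i|c_i|^{s_x}=1$, which holds by the definition of $s_x$. Since $0<b_i<1$ makes the left-hand side of \eqref{k56} strictly decreasing in $d_x$, this solution is unique, hence $d_x=s_x$. Because direction-$x$ dominates we have $\dim_{\rm{aff}}(\mathcal{S})=d_x=s_x$ by \eqref{k57}, and since the affinity dimension is always an upper bound for the Hausdorff dimension of the attractor (see \cite{falconer1988hausdorff}), we conclude $\dim_{\rm H}\Lambda_{\mathcal{S}}\le s_x$.

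For the lower bound I would project to the $x$-axis. Since $\Pi_{\mathcal{H}}=\mathrm{proj}_x\circ\Pi$, the image $\mathrm{proj}_x(\Lambda_{\mathcal{S}})$ equals $\Lambda_{\mathcal{H}}$, and as $\mathrm{proj}_x$ is $1$-Lipschitz it cannot increase Hausdorff dimension; hence $\dim_{\rm H}\Lambda_{\mathcal{S}}\ge\dim_{\rm H}\Lambda_{\mathcal{H}}$. It remains to show $\dim_{\rm H}\Lambda_{\mathcal{H}}\ge s_x$. Here I would use the exponential separation hypothesis for $\mathcal{H}$ together with Hochman's Theorem \ref{k79}, applied to the optimal Bernoulli weights $p_i=|c_i|^{s_x}$ (a probability vector precisely because $\sum_i|c_i|^{s_x}=1$). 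For this choice a direct computation gives $h(\mathbb{P}_{\mathbf{p}})=s_x\,\chi_{\mathcal{H},\mathbf{p}}$, so $h(\mathbb{P}_{\mathbf{p}})/\chi_{\mathcal{H},\mathbf{p}}=s_x<1$, and Theorem \ref{k79} yields $\dim_{\rm H}\nu_{\mathcal{H},\mathbf{p}}=s_x$. Since $\nu_{\mathcal{H},\mathbf{p}}$ is supported on $\Lambda_{\mathcal{H}}$, this gives $\dim_{\rm H}\Lambda_{\mathcal{H}}\ge s_x$, and combined with the upper bound we obtain $\dim_{\rm H}\Lambda_{\mathcal{S}}=s_x$.

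The computations are all routine, so the only genuine content is recognizing that the hypotheses conspire to collapse the affinity dimension to $s_x$: the condition $\sum|c_i|<1$ is exactly what makes $d_x=s_x$ (the vertical contraction rates $b_i$ drop out of \eqref{k56}), and the exponential separation condition is exactly what is needed to promote the similarity dimension of $\mathcal{H}$ to an honest lower bound for $\dim_{\rm H}\Lambda_{\mathcal{H}}$. This is why the case is \emph{trivial}: there is no real interaction between the horizontal and vertical dynamics to analyze, and no Furstenberg-measure or transversality input is required.
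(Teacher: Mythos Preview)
Your proof is correct and follows the same two-step route as the paper: the upper bound comes from identifying $\dim_{\rm{aff}}(\mathcal{S})=d_x=s_x$, and the lower bound comes from projecting to $\Lambda_{\mathcal{H}}$ and applying Hochman's theorem. The only cosmetic difference is that the paper invokes Hochman's result directly for the attractor $\Lambda_{\mathcal{H}}$, whereas you route through the measure $\nu_{\mathcal{H},\mathbf{p}}$ with optimal weights $p_i=|c_i|^{s_x}$; both yield the same conclusion.
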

The proof is immediate since in this case $\dim_{\rm {aff}}(\mathcal{S})=s_x $ which is an upper bound always. The lower estimate comes from Hochman Theorem: the dimension of the attractor of $\mathcal{H}$ is equal to $s_x$ and this attractor is a projection of $\Lambda$. Apart from the trivial case, we can get results only if we assume that
\begin{equation}\label{k65}
  c:=c_i\mbox{ and }b:=b_i \mbox{ holds for all } i.
\end{equation}

\subsection{Further notation}\label{k91}
We use the notation of Section \ref{k92}.

That is, from now on we always assume that the matrices $T_i$ in \eqref{k16} are of the form
\begin{equation}\label{k66}
  T_i=\left(
                                  \begin{array}{cc}
                                    c & 0 \\
                                    d_i & b \\
                                  \end{array}
                                \right).
\end{equation}
Clearly, if $c>b$ then direction-$x$ dominates and if $b>c$ then direction-$y$ dominates.

We use the notation of Section \ref{k92}, where we introduced the uniformly distributed Bernoulli measure $\mu$ on the symbolic space $\Sigma$, the natural projection $\Pi$, the projection of $\mu$ to the attractor $\Lambda\subset \mathbb{R}^2$ was called $\nu$ and the projection of $\nu$ to the $x$-axis was denoted by $\nu_x$.


The measures $\mu$ and $\nu$ can be disintegrated, according to the partitions
$$
\xi_a^s:=\{\mathbf{i}\in\Sigma:\Pi_{\mathcal{H}(\mathbf{i})}=a\}
\mbox{ and }
\xi_a:=\left\{(x,y):x=a, y\in[0,1]\right\}
$$
\begin{equation}\label{k1}
  \mu(A^s)=\int \alpha_a^\mu(A)d\mu_x(a), \quad
  \nu(A)=\int \alpha_a^\nu(A)d\nu_x(a),
\end{equation}
for any sets $A^s\subset \Sigma$ and $A\subset [0,1]^2$.
That is the probability measures
$\alpha_a^\mu$ and $\alpha_a^\nu$ are supported by
$\xi_a^s$ and $\xi_a$ respectively.

 Clearly,
\begin{equation}\label{k10}
  \mu_x=(\mathrm{proj}_x)_*\nu=\nu_x.
\end{equation}
For an arbitrary $a\in [0,1]$ we write $\Pi_a$ for the restriction of the natural projection $\Pi$
to $\xi_a^s$. That is $\Pi_a:\xi_a^s\cap \Sigma\to
\xi_a\cap \Lambda_{\mathcal{S}}$

Then we have and
\begin{equation}\label{k2}
  (\Pi_a)_*\alpha_{a}^{\mu}=\alpha_{a}^{\nu}.
\end{equation}

Now we introduce the vertical distance, vertical ball and and vertical neighborhood of a set:
  $$
  \mathrm{dist}_y((x_0,y_0),(x,y)):=
  \left\{
    \begin{array}{ll}
      |y-y_0|, & \hbox{if $x=x_0$;} \\
      \infty , & \hbox{otherwise},
    \end{array}
  \right.
  $$
  $$
  B_y((x_0,y_0),r):=\left\{(x,y):x=x_0 \mbox{ and }
  |y-y_0|<r
  \right\}.
  $$
  Further, let
  $$
  U_y(H,r):=\bigcup\limits_{(x_0,y_0)\in H}B_y((x_0,y_0),r).
  $$
  We define the vertical distance in the symbolic space as well:

$$
\mathrm{dist}_y^s(\mathbf{i}, \mathbf{j}) :=
\left\{
    \begin{array}{ll}
      b^{|\mathbf{i} \wedge\mathbf{j}|}, & \hbox{if $\Pi_{\mathcal{H}}(\mathbf{i})=\Pi_{\mathcal{H}}(\mathbf{j})$;} \\
      \infty , & \hbox{otherwise},
    \end{array}
  \right.
$$

$B_y^s(\mathbf{i},r)$ and $U_y^s(H^s, r)$ are then defined analogously to $B_y((x,y),r)$ and $U_y(H,r)$.


\section{Direction-$x$ dominates}\label{k54}
In this case by \eqref{k57},
\begin{equation}\label{k67}
  \dim_{\rm {aff}}\mathcal{S}=
  \left\{
    \begin{array}{ll}
      \frac{\log N}{-\log c}, & \hbox{if $Nc<1$;} \\
      1+\frac{\log (Nc)}{-\log b}, & \hbox{if $Nc>1$.}
    \end{array}
  \right.
\end{equation}
As we have seen above the $Nc<1$ case is obvious. So, from now on we may assume that $Nc>1$.

Consider the iterated function system $\mathcal{H}$. If $Nc>1$ then the affinity dimension of this system is larger than 1. For systems like that there are many results proving (under different assumptions) that their natural measure is not only absolutely continuous but also has $L^q$ density for some $q>1$. For example, for Bernoulli convolutions the natural measure has $L^2$ density for almost all parameters (\cite{solomyak1995random}), has $L^q$ density for some $q>1$ for all parameters except a dimension zero subset (\cite{shmerkin2014absolute}), and by the new preprint of Shmerkin it has $L^q$ density for all $q>1$ for all parameters except a dimension zero subset (\cite{shmerkin2016furstenberg}). It turns out that the knowledge of $L^q$ properties of the density of natural measure is quite useful.

We are going to use the following assumptions:

\begin{assumptionA}\label{k15}\ 


   (A1) $c> \frac{1}{N}$,

   (A2) $b<\frac{1}{N}$,

   (A3) $\nu_x\ll\mathcal{L}\mathrm{eb}$  with $L^q$ density, for some $q>1$,

   (A4) $\mathcal{S}$ satisfies transversality. (Definition \ref{k59}.)

\end{assumptionA}


When we replace (A3) with a stronger assumption (B3), we relax  our assumption about $b$:

\begin{assumptionB}\label{k51}\


  (B1) $c>\frac{1}{N}$

 (B2)  $b<c$.

   (B3) $\nu_x\ll\mathcal{L}\mathrm{eb}$  with $L^q$ density, for all $q>1$,

   (B4) $\mathcal{S}$ satisfies transversality.
\end{assumptionB}







If we consider the corresponding diagonal system (the one in
\eqref{k53}) then assumptions (A1, B1) say that the similarity dimension to the $x$-axis is greater  than one in both cases, while (A2) postulates that the projection to the vertical axis of the corresponding diagonal system has similarity dimension less than one.



Our new results when $c>b$ are as follows:
\begin{theorem}\label{k17}
Let $\mathcal{S}$ be a self-affine  IFS of the form \eqref{k16} satisfying  Assumption A. Then
\begin{equation}\label{k18}
  \dim_{\rm H} \left(\Lambda\right)=\dim_{\rm H} \left(\nu\right)=
1+\frac{\log(Nc)}{-\log b}.
\end{equation}
\end{theorem}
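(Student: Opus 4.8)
The plan is to prove the two equalities in \eqref{k18} separately, exploiting that the upper bound $\dim_{\rm H}(\Lambda) \leq 1 + \frac{\log(Nc)}{-\log b}$ is free: since direction-$x$ dominates and $Nc>1$, the affinity dimension equals $1 + \frac{\log(Nc)}{-\log b}$ by \eqref{k67}, and the affinity dimension always bounds the Hausdorff dimension of the attractor from above. Because $\dim_{\rm H}(\nu) \leq \dim_{\rm H}(\Lambda)$ trivially (as $\nu$ is supported on $\Lambda$), it suffices to establish the single lower bound $\dim_{\rm H}(\nu) \geq 1 + \frac{\log(Nc)}{-\log b}$, and all three quantities will be squeezed together. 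So the real content is a lower bound on the dimension of the push-forward measure $\nu$ of the uniform Bernoulli measure $\mu$.

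**The disintegration approach.** To bound $\dim_{\rm H}(\nu)$ from below, I would use the disintegration \eqref{k1} of $\nu$ over its projection $\nu_x$ onto the $x$-axis, writing $\nu(A) = \int \alpha_a^\nu(A)\, d\nu_x(a)$, with conditional measures $\alpha_a^\nu$ supported on the vertical fibers $\xi_a$. The target dimension $1 + \frac{\log(Nc)}{-\log b}$ decomposes naturally: the ``$1$'' should come from the transversal ($x$-axis) direction and the remaining $\frac{\log(Nc)}{-\log b}$ from the vertical conditional measures. For the transversal part, Assumption (A3) gives exactly what is needed — $\nu_x \ll \mathcal{L}\mathrm{eb}$ with $L^q$ density for some $q>1$ — so $\dim_{\rm H}(\nu_x) = 1$, and the $L^q$ property will be the quantitative input controlling how much mass $\nu_x$ can concentrate on small $x$-intervals. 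For the vertical part, I would analyze the conditional measures $\alpha_a^\nu = (\Pi_a)_* \alpha_a^\mu$ via \eqref{k2}; the vertical contraction ratio is uniformly $b$, and the number of distinct vertical cylinder positions at level $n$ behaves like $(Nc)^n$ (roughly $N^n$ symbolic words collapsing to about $(1/c)^n$ horizontal cells, leaving $(Nc)^n$ resolvable vertical translates), giving the exponent $\frac{\log(Nc)}{-\log b}$.

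**Implementing a Ledrappier–Young / local-dimension estimate.** Concretely, I would estimate $\nu(B((x,y),r))$ for a typical point $(x,y) = \Pi(\mathbf{i})$ and small $r$. Fix the scale by choosing $n$ with $b^n \approx r$. Using the vertical-distance machinery of Section~\ref{k91} (the symbolic vertical distance $\mathrm{dist}_y^s$ and vertical balls $B_y^s$), I would split the ball into its horizontal extent, controlled by the $L^q$ density of $\nu_x$ through a Hölder-type inequality over the $x$-interval of width $r$, and its vertical extent on each fiber. The transversality condition (A4) is the key structural hypothesis here: it guarantees via \eqref{k3} that cylinders with distinct first symbols have horizontal overlap bounded by $c_3 b^n$, so that distinct symbolic branches contributing to a single small vertical ball are genuinely separated and do not overcount the measure. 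This lets me control $\alpha_a^\nu(B_y((x,y),r))$ by roughly $(Nc)^{-n} \approx r^{\log(Nc)/(-\log b)}$ and combine it with the transversal estimate to obtain the pointwise bound $\nu(B((x,y),r)) \lesssim r^{1 + \log(Nc)/(-\log b) - \epsilon}$ for $\nu$-a.e.\ point. A Frostman/Billingsley argument then yields the lower bound on $\dim_{\rm H}(\nu)$.

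**The main obstacle.** The hardest step is coupling the transversal and vertical estimates without loss: the $L^q$ density assumption (A3) controls the total horizontal mass in a strip of width $r$, but to convert this into a clean bound on $\nu$ of a two-dimensional ball I must ensure that the vertical conditional measures $\alpha_a^\nu$ are simultaneously well-behaved for $\nu_x$-typical $a$, and that the number of symbolic branches landing in a given small vertical window is correctly accounted for. This is precisely where transversality (A4) interacts with the $L^q$ bound — without transversality, cylinders with different first symbols could align vertically and pile up mass, inflating the local measure and destroying the lower bound. Making the exponent $q$ explicit in the covering estimate (so that the $L^q$ integrability genuinely forces $\dim_{\rm H}(\nu_x) = 1$ at the quantitative level rather than merely $\nu_x \ll \mathcal{L}\mathrm{eb}$) and threading it through the fiber decomposition is the technical crux of the argument.
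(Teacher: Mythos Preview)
Your overall architecture matches the paper's: the upper bound is the affinity dimension, and for the lower bound one disintegrates over $\nu_x$ and aims to show that the vertical conditional measures $\alpha_a^\nu$ have local dimension $\frac{\log(Nc)}{-\log b}$ at $\nu$-typical points. But the passage from your heuristic ``$(Nc)^n$ resolvable vertical translates'' to the rigorous bound $\alpha_a^\nu(B_y(\Pi(\mathbf i),b^n))\approx (Nc)^{-n}$ is where all the work lies, and your proposal does not supply it. The paper obtains this in two steps you have not identified. First, the \emph{symbolic} conditional local dimension is read off from Feng--Hu \cite[Proposition~4.14]{feng2009dimension}: for $\mu$-a.e.\ $\mathbf i$, $-\frac{1}{n}\log\alpha_{\Pi_{\mathcal H}(\mathbf i)}^{\mu}(B_y^s(\mathbf i,b^n))\to h(\mu)-h_x(\mu)$, and Corollary~\ref{k84} (Hochman plus Feng--Hu, using (A3)) gives $h_x(\mu)=-\log c$, whence the limit equals $\log(Nc)$. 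Your counting heuristic is the intuition behind this identity but is not a proof.

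Second, and this is the genuine crux, one must transfer the symbolic statement for $\alpha_a^\mu$ to the projected statement for $\alpha_a^\nu$ (Proposition~\ref{k4}). The obstruction is that distinct cylinders $[\pmb\omega],[\pmb\tau]$ with $\omega_1\ne\tau_1$ may project into the same small vertical ball on a fibre. The paper introduces $L(\mathbf i)=$ vertical distance on the fibre to the nearest point with a different first symbol, defines the set $G$ of $\varepsilon$-\emph{good} points where $L(\sigma^n\mathbf i)\ge Ce^{-\varepsilon n}$ for all $n$, and proves $\mu(G)=1$. That proof (Lemma~\ref{k35}) is exactly where (A3), (A4) and (A2) are used together: transversality bounds $|\mathrm{proj}_x(S_{\pmb\omega}[0,1]^2\cap S_{\pmb\tau}[0,1]^2)|\le c_3 b^\ell$, the $L^q$ density of $\nu_x$ controls, via a Markov argument on the sets $\mathrm{Bad}^1_{\ell,\delta},\mathrm{Bad}^2_{\ell,\delta}$, how much $\mu$-mass sits over places where too many level-$\ell$ cylinders overlap, and (A2) ($Nb<1$) makes the resulting series summable. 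Your sentence ``distinct symbolic branches \ldots\ are genuinely separated and do not overcount the measure'' is the desired conclusion, not the argument. Finally, the paper does not combine horizontal and vertical into a direct bound on $\nu(B((x,y),r))$; once the fibre dimension is known it invokes the slicing inequality \cite[Theorem~5.8]{falconer1986geometry} together with $\nu_x\sim\mathcal L^1$ to get $\dim_H\Lambda\ge 1+\frac{\log(Nc)}{-\log b}$.
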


\begin{theorem}\label{j99}
Let $\mathcal{S}$ be a self-affine  IFS of the form \eqref{k16} satisfying  Assumption B. Then
\begin{equation}\label{k94}
  \dim_{\rm H} \left(\Lambda\right)=\dim_{\rm H} \left(\nu\right)=
\min\left(2, 1+\frac{\log(Nc)}{-\log b}\right).
\end{equation}
\end{theorem}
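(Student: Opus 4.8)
The plan is to reduce Theorem \ref{j99} to Theorem \ref{k17} wherever possible, and to handle the genuinely new regime — where the affinity dimension formula would exceed $2$ and must be truncated — by a separate absolute continuity argument. First I would observe that the upper bound is automatic: the affinity dimension is always an upper bound for $\dim_{\rm H}(\Lambda)$, and since $\Lambda\subset\mathbb{R}^2$ we trivially have $\dim_{\rm H}(\Lambda)\le 2$, so $\dim_{\rm H}(\Lambda)\le\min\left(2,\, 1+\frac{\log(Nc)}{-\log b}\right)$. Thus the whole content is the matching lower bound for $\dim_{\rm H}(\nu)$, and it splits naturally according to whether $1+\frac{\log(Nc)}{-\log b}\le 2$ or $>2$, i.e. whether $Nb\le 1$ or $Nb>1$.

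In the regime $Nb\le 1$ (equivalently $b<\frac1N$, possibly with equality), the target value is $1+\frac{\log(Nc)}{-\log b}$, exactly as in Theorem \ref{k17}. Assumption B differs from Assumption A only in that (B2) permits $b<c$ rather than $b<\frac1N$, and that (B3) strengthens the $L^q$ hypothesis to all $q>1$. Since (B3) implies (A3) and in this sub-regime $b<\frac1N$ gives (A2), the hypotheses of Theorem \ref{k17} are met and the conclusion $\dim_{\rm H}(\nu)=1+\frac{\log(Nc)}{-\log b}$ follows directly. So no new work is needed here beyond checking that Assumption A holds.

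The substantial case is $Nb>1$, where the formula saturates at $2$ and I must prove $\dim_{\rm H}(\nu)=2$; equivalently, that $\nu$ is absolutely continuous (or at least of full dimension). The strategy is to disintegrate $\nu$ along the vertical fibers as in \eqref{k1}, writing $\nu(A)=\int \alpha_a^\nu(A)\,d\nu_x(a)$. By (B3) the base measure $\nu_x$ has $L^q$ density for every $q>1$, so it suffices to show that the conditional measures $\alpha_a^\nu$ on the vertical fibers are typically absolutely continuous with controlled density, and then to combine fiber and base estimates. The vertical fiber dynamics is governed by the Furstenberg-type system $\mathcal{F}$ from \eqref{k69}: the vertical spreading at level $n$ is of order $b^n$ while the number of overlapping cylinders grows like $(Nb)^n>1$, so the fibers should fill out an interval and the conditional measures should be absolutely continuous once $Nb>1$. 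The transversality condition (B4), via Lemma \ref{k73}, ensures that distinct cylinders either separate or cross at a uniformly bounded angle, which is what lets one pass from the base $L^q$ regularity to genuine two-dimensional absolute continuity rather than mere full dimension of a singular measure. Concretely I would estimate the $L^2$ (or $L^q$) norm of the $n$-th approximation to $\nu$ by counting pairs of cylinders that are vertically close, bounding such coincidences using transversality exactly as \eqref{k3} bounds the horizontal overlap, and letting $n\to\infty$ with the uniform $L^q$ bound on $\nu_x$ absorbing the horizontal direction.

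The main obstacle will be this last absolute continuity estimate in the saturated regime. The delicate point is that $b<c$ is now allowed to be large (close to $1$), so the cylinders are not strongly contracted vertically and the naive energy integral $\iint d\nu(x,y)\,d\nu(x',y')/\mathrm{dist}((x,y),(x',y'))^{s}$ need not converge for $s$ close to $2$ by soft arguments alone. One must genuinely exploit that (B3) gives $L^q$ density for all $q>1$ (not merely some $q$, as in (A3)): the extra integrability is what compensates for the weaker vertical contraction and allows the transversality-based overlap count to close. I expect the proof to proceed by establishing a uniform bound on $\|\nu_n\|_{L^q}$ for the finite-level measures, using transversality to control vertical coincidences and (B3) to control the horizontal marginal, and then concluding absolute continuity of $\nu$ by weak-$*$ compactness in $L^q$. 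Verifying that the overlap combinatorics interacts correctly with the precise exponent $q$ dictated by $Nb$ and $Nc$ is where the real care is required.
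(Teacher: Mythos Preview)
Your case split is based on a miscomputation: $1+\dfrac{\log(Nc)}{-\log b}\le 2$ is equivalent to $Ncb\le 1$, not to $Nb\le 1$. Since $c<1$, the interval $\tfrac1N\le b<\tfrac1{Nc}$ is nonempty; there the affinity dimension is still strictly below $2$, so the target is the formula value and not $2$, yet (A2) fails and Theorem \ref{k17} is unavailable. Your plan covers neither branch in this range: you would either try to prove $\dim_{\rm H}\nu=2$ (false) or invoke Theorem \ref{k17} (hypotheses not satisfied). This is a genuine gap, not a cosmetic one.

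Even setting the threshold error aside, the paper's argument is structurally different from yours. It does not split into cases and does not aim at absolute continuity of $\nu$. Instead it bounds the \emph{correlation dimension} directly: for an $r$-mesh with $r\approx b^\ell$ it estimates $\sum_i \nu(B_i^{(r)})^2$. Two ingredients drive this. First, Lemma \ref{j97} shows that the number $B_\ell^L$ of ordered pairs $(\pmb{\omega},\pmb{\tau})\in\Sigma_\ell^{2,\mathrm{diff}}$ with $P_{\pmb{\omega}}^L\cap P_{\pmb{\tau}}^L\ne\emptyset$ satisfies $\limsup \frac1\ell\log B_\ell^L\le \log(N^2c^2)$; the proof is an inductive multi-scale argument along the sequence $n_{k+1}=\lfloor(\log c/\log b)n_k\rfloor$, using transversality to localise the intersection to an interval of length $\approx c^{n_k}$ and then Corollary \ref{cor:add2} (which needs $L^q$ for \emph{every} $q$) to count continuations. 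Second, each mesh cube meets at most $Z_1+Z_2$ level-$\ell$ cylinders, and $\sum_i Z_j^2(x_i,y_i)$ is controlled by summing $B_{\ell-k}^{2L+1}$ over the common prefix length $k$; the resulting geometric series $\sum_k (Ncb)^{-k}$ is what distinguishes the two regimes $Ncb\gtrless 1$ and produces the $\min$ in \eqref{k94} in one stroke. Your sketch (``count vertically close pairs, transversality plus $L^q$'') is in the right spirit but misses this pair-counting lemma, which is the real content of the upgrade from Assumption A to Assumption B; the naive count of intersecting pairs is $N^{2\ell}$, and getting it down to $(Nc)^{2\ell+o(\ell)}$ is exactly where the full strength of (B3) is spent.
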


Observe that Assumption A guarantees that $1+\log(Nc)/-\log b <2$.


\subsection{The proof of Theorem \ref{k17}}
First we give the proof assuming a density assertion (Proposition \ref{k4}).
The proof of Proposition \ref{k4} is given in \bk{Sections} \ref{j80} and \ref{j79}.
\subsubsection{The proof of Theorem \ref{k17} modulo a density lemma}\label{k25}

\begin{proof}[Proof of Theorem \ref{k17}]
  Let $h_x(\mu)$ be the projection entropy which corresponds to the projection $\Pi_{\mathcal{H}}$ (that is, the entropy of $(\mathcal{H}, \Pi_{\mathcal{H}}(\mu))$).
It follows from \cite[Proposition 4.14]{feng2009dimension} that

\begin{equation}\label{k24}
  -\lim\limits_{n\to\infty}
\frac{\log \alpha_{\Pi_{\mathcal{H}}(\mathbf{i})}^{\mu}(B_y^s(\mathbf{i},b^n))}{n}
=
h(\mu)-h_{x}(\mu)  \mbox{ for $\mu$ a.a.} \mathbf{i}.
\end{equation}
Observe that
$h(\mu)=\log N$.
On the other hand,  by  Assumption (A3), it follows from Corollary \ref{k84} that we have
\begin{equation}\label{k23}
 h_{x}(\mu)=-\log c.
\end{equation}
Putting these together we obtain that

  \begin{equation}\label{k95}
  -\lim\limits_{n\to\infty}
\frac{\log \alpha_{\Pi_{\mathcal{H}}(\mathbf{i})}^{\mu}(B_y^s(\mathbf{i},b^n))}{n}
=
  \log (Nc) \mbox{ for $\mu$ a.a.} \mathbf{i}.
\end{equation}
Our aim below is to prove the corresponding statement for
$\alpha_{a_0}^{\nu}$:
 \begin{equation}\label{k26}
  -\lim\limits_{n\to\infty}
\frac{\log \alpha_{a_0}^{\nu}(B_y(\Pi_{a_0}(y_0),b^n))}{n}
=
  \log (Nc) \mbox{ for $\nu$ a.a.} (a_0,y_0).
\end{equation}
Namely, if  Proposition \ref{k4} holds then \eqref{k26} follows and this implies that   for $\mu_x$-a.a. $a_0\in[0,1]$
we have
\begin{equation}\label{k27}
  \dim_{\rm H} (\Lambda_{\mathcal{S}}\cap\xi_{a_0}) \geq
\frac{\log (Nc)}{-\log b} .
\end{equation}
Using that $\mu_x\sim \mathcal{L}_1$ we obtain that \eqref{k27}
holds for a set of Hausdorff dimension $1$ of $a_0$. Then
by \cite[Theorem 5.8]{falconer1986geometry} we obtain that
\begin{equation}\label{k28}
  \dim_{\rm H} \Lambda_\mathcal{S} \geq 1+\frac{\log (Nc)}{-\log b}.
\end{equation}
The opposite inequality is immediate since $1+\frac{\log (Nc)}{-\log b}$
is the affinity dimension of $\mathcal{S}$ which is always an upper bound on the Hausdorff dimension of the attractor. So, to complete the proof of Theorem \ref{k17} it is enough to verify Proposition \ref{k4} below.
\end{proof}

\subsubsection{Density in the symbolic space versus on the attractor}\label{j80}

\begin{proposition}\label{k4}
  For $\mu$-a.a. $\mathbf{i}$ we have
  \begin{equation}
\liminf\limits_{n\to\infty} \frac{1}{n} \log \alpha_{\Pi_{\mathcal{H}}(\mathbf{i})}^{\mu}
\left(B_y^s\left(\mathbf{i},b^m\right)\right)
=
\liminf\limits_{n\to\infty} \frac{1}{n} \log \alpha_{\Pi_{\mathcal{H}}(\mathbf{i})}^{\nu}
\left(B_y\left(\Pi(\mathbf{i}),b^m\right)\right).
  \end{equation}
  The same holds if we replace $\liminf$ with $\limsup$ on both sides.
\end{proposition}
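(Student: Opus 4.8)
The plan is to transfer the scaling exponent between the two fibres through the push-forward identity \eqref{k2}, $(\Pi_{a})_{*}\alpha_{a}^{\mu}=\alpha_{a}^{\nu}$ with $a:=\Pi_{\mathcal H}(\mathbf i)$, by comparing the symbolic vertical ball $B_y^s(\mathbf i,b^n)$ with the $\Pi_a$-preimage of the geometric vertical ball $B_y(\Pi(\mathbf i),b^n)$. Both quantities in the statement are of the form $\liminf$ (respectively $\limsup$) of $\tfrac1n\log(\text{measure of a ball of radius }b^n)$, and replacing the radius $b^n$ by $C\,b^n$ for a fixed constant $C$ only shifts the index by a bounded amount; hence it suffices to establish, for $\mu$-a.e. $\mathbf i$ and all large $n$, the two one-sided comparisons below, which then give both inequalities simultaneously for $\liminf$ and for $\limsup$.

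The first comparison is elementary. If $\mathbf j\in B_y^s(\mathbf i,b^n)$ then $|\mathbf i\wedge\mathbf j|>n$, so $\Pi(\mathbf i)$ and $\Pi(\mathbf j)$ lie in the common cylinder $S_{\mathbf i|_{n+1}}[0,1]^2$, whose vertical extent is at most $C_1 b^{n}$. Thus $\Pi_a\big(B_y^s(\mathbf i,b^n)\big)\subseteq B_y(\Pi(\mathbf i),C_1 b^n)$, and \eqref{k2} gives $\alpha_a^\mu(B_y^s(\mathbf i,b^n))\le\alpha_a^\nu(B_y(\Pi(\mathbf i),C_1 b^n))$. Taking $\tfrac1n\log$ and passing to the $\liminf$ (respectively $\limsup$) yields one of the two inequalities.

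The reverse comparison is the heart of the proof. Write $(\Pi_a)^{-1}B_y(\Pi(\mathbf i),b^n)=B_y^s(\mathbf i,b^n)\sqcup E_n(\mathbf i)$, where the error set $E_n(\mathbf i)$ collects those $\mathbf j$ on the fibre of $a$ whose image is vertically within $b^n$ of $\Pi(\mathbf i)$ although they split from $\mathbf i$ at some level $k:=|\mathbf i\wedge\mathbf j|\le n$. For such a $\mathbf j$ I factor out the common prefix $\mathbf w=\mathbf i|_k$: the shifted sequences $\sigma^{k}\mathbf i,\sigma^{k}\mathbf j$ lie on the single fibre $a'=h_{\mathbf w}^{-1}(a)$, have distinct first symbols, and are at vertical distance at most $b^{\,n-k}$. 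Because a level-$m$ cylinder has vertical cross-section of height $\asymp b^{m}$ along any fibre it meets, taking $m\approx n-k$ forces the two cross-sections to overlap, whence $a'\in\mathrm{proj}_x\big(S_{\sigma^{k}\mathbf i|_m}[0,1]^2\cap S_{\sigma^{k}\mathbf j|_m}[0,1]^2\big)$. By the transversality condition (A4) (Definition \ref{k59}) this confines $a'$, and therefore $a$, to a horizontal set of length $\lesssim c^{k}b^{\,n-k}$.

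It remains to convert this geometric confinement into a measure estimate and to control it uniformly. Here I use the $L^q$ density of $\nu_x$ (Assumption (A3)): a horizontal set of length $L$ has $\nu_x$-measure $\lesssim L^{1/p}$, with $\tfrac1p+\tfrac1q=1$. Estimating $\mathbb E_\mu[\alpha_a^\mu(E_n)]$, equivalently the relative product $\int\alpha_a^\mu\times\alpha_a^\mu\,d\mu_x(a)$ of vertically close pairs with short common prefix, by summing the transversality gain over all split levels $k\le n$ and all prefix words $\mathbf w$ of length $k$, and rescaling each level-$k$ block by self-similarity, the aim is a bound that decays at a strictly faster exponential rate than the main term $\alpha_a^\mu(B_y^s(\mathbf i,b^n))$; a Borel–Cantelli argument then makes $E_n(\mathbf i)$ negligible for $\mu$-a.e. $\mathbf i$ and all large $n$, giving the reverse inequality. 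I expect the main obstacle to be exactly this summation: the number $N^{k}$ of prefix words proliferates exponentially, and one must verify that the transversality factor $b^{\,n-k}$ together with the $L^q$-Hölder gain and the self-similar rescaling genuinely dominate it uniformly in $k$, so that the total overlap error is exponentially smaller than the symbolic ball measure. This is precisely the step where both the full strength of (A3) (an $L^q$ density, not mere absolute continuity) and the transversality (A4) are indispensable.
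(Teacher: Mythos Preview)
Your trivial direction matches the paper's exactly. The hard direction, however, is organized differently from the paper, and the difference matters.

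You try to control the conditional measure $\alpha_a^\mu(E_n(\mathbf i))$ of the error set and compare it with the main term $\alpha_a^\mu(B_y^s(\mathbf i,b^n))$. You correctly identify the obstacle: after summing over split levels $k$ and prefixes $\mathbf w\in\Sigma_k$, one must check that the transversality gain $b^{n-k}$ and the H\"older gain from the $L^q$ density beat the proliferation $N^k$. But you then need the resulting bound to decay \emph{faster} than $(Nc)^{-n}$, the rate of the main term; you have not shown this, and it is not clear the averaged estimate $\mathbb E_\mu[\alpha_a^\mu(E_n)]$ even achieves such a rate. Moreover, passing from an expectation bound to a $\mu$-a.e.\ comparison with the (random) main term via Markov and Borel--Cantelli requires a margin you have not produced.

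The paper sidesteps both difficulties by a pointwise set-inclusion argument rather than a measure comparison. It introduces the scalar $L(\mathbf i)=\min\{\mathrm{dist}_y(\Pi(\mathbf i),\Pi(\mathbf j)):j_1\ne i_1,\ \Pi_{\mathcal H}(\mathbf j)=\Pi_{\mathcal H}(\mathbf i)\}$ and calls $\mathbf i$ \emph{$\varepsilon$-good} if $L(\sigma^k\mathbf i)\ge C e^{-\varepsilon k}$ for all $k$. For such $\mathbf i$ one has the inclusion $\Pi_a^{-1}\big(B_y(\Pi(\mathbf i),\,C b^n e^{-\varepsilon n})\big)\subseteq B_y^s(\mathbf i,b^n)$ (Claim~\ref{k6}); your error set is literally empty after a subexponential shrink of the radius, so no comparison with the main term is needed at all. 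The remaining task is to show $\mu$-a.e.\ $\mathbf i$ is $\varepsilon$-good for every $\varepsilon>0$ (Proposition~\ref{k29}). This reduces, via shift-invariance $\mu(\sigma^{-n}V_{\varepsilon,n})=\mu(V_{\varepsilon,n})$, to a \emph{single} estimate $\mu(\{L<e^{-\varepsilon n}\})\le c_4 r^n$ (Lemma~\ref{k35}), proved with exactly your ingredients (transversality plus the $L^q$ density), followed by Borel--Cantelli. Thus the paper replaces your sum over split levels $k$ by one estimate at $k=0$ plus ergodicity, and replaces your measure-versus-measure comparison by a set inclusion with an $\varepsilon$-loss removable in the limit.
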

We need the following notion:
\begin{definition}[Definition of $L$]
  Let $\mathbf{i}\in \Sigma$ and we write
  \begin{equation}\label{k9}
   Z_n(\mathbf{i}):=
  \left\{\mathbf{j}:\mathbf{j}|_n\ne\mathbf{i}|_n,\ \Pi_{\mathcal{H}}(\mathbf{j})=\Pi_{\mathcal{H}}(\mathbf{i})\right\}=
[\mathbf{i}|_n]^c\cap \Pi_{\mathcal{H}}^{-1}\left(\Pi_{\mathcal{H}}(\mathbf{i})\right),
\end{equation}
where $[\mathbf{i}|_n]:=\left\{\mathbf{j}\in\Sigma:\mathbf{i}|_n=\mathbf{j}|_n\right\}$, and for a set $A$, we write $A^c$ for the complement of  $A$.
  We define the function    $L$ which is the vertical distance from the closest point having a different first cylinder in its symbolic representation: 
  \begin{equation}\label{k33}
	 L(\mathbf{i}):=\min\left\{\mathrm{dist}_y(\mathbf{i}, \mathbf{j}); \mathbf{j} \in Z_1(\mathbf{i})
\right\}.
  \end{equation}

   \end{definition}

  We say that an $\mathbf{i}\in\Sigma$ is
  \textit{$\varepsilon$-good} if $\exists C=C(\mathbf{i},\varepsilon)>0$ such that
  for all $n \geq 0$ we have $L(\sigma^n\mathbf{i}) \geq C \cdot \e{-\varepsilon n}$.
Moreover,
$\mathbf{i}\in\Sigma$ is called  \textit{good} if it is $\varepsilon$-good for all $\varepsilon>0$. That is, we write
\begin{equation}\label{k31}
  G:=\left\{
\mathbf{i}\in\Sigma: \mathbf{i} \mbox{ is } \varepsilon \mbox{-good for all } \varepsilon>0
\right\}.
\end{equation}

The geometric meaning of $G$ is given by following observation.

  \begin{claim}\label{k6}
   Assume that $\mathbf{i}\in\Sigma$ is $\varepsilon$-good for some $C,\varepsilon>0$. Let $\pmb{\omega}\in\left\{1, \dots ,N\right\}^n$ such that $\pmb{\omega}\ne\mathbf{i}|_n$. Then
   \begin{equation}\label{k7}
      \Lambda_{\pmb{\omega}}\cap B_y\left(
   \Pi(\mathbf{i}),C \cdot b^n\e{-\varepsilon n}
   \right)=\emptyset.
   \end{equation}
  \end{claim}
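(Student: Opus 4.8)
The plan is to localize the first index where $\pmb{\omega}$ deviates from $\mathbf{i}$ and then exploit the fact that, in the diagonally homogeneous lower-triangular setting \eqref{k66}, applying a word of length $m$ contracts vertical distances (distances between points sharing an $x$-coordinate) by exactly the factor $b^m$. Concretely, for $T_i$ as in \eqref{k66} one has $S_i(x,y)=(cx+u_i,\,d_ix+by+v_i)$, so $S_i$ sends two points on a common vertical line to two points on a common vertical line with their vertical separation multiplied by $b$; iterating, $S_{\mathbf{v}}$ scales vertical distances by $b^{|\mathbf{v}|}$.

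First I would reduce to points on one vertical line. The ball $B_y(\Pi(\mathbf{i}),r)$ contains only points of the line $\{x=\Pi_{\mathcal{H}}(\mathbf{i})\}$, since $\mathrm{dist}_y$ is $\infty$ off this line. Hence any point of $\Lambda_{\pmb{\omega}}$ that could lie in the ball must have the form $\Pi(\pmb{\omega}\mathbf{j})$ for some $\mathbf{j}\in\Sigma$ with $\Pi_{\mathcal{H}}(\pmb{\omega}\mathbf{j})=\Pi_{\mathcal{H}}(\mathbf{i})$; every other point of $\Lambda_{\pmb{\omega}}$ lies off the line and is automatically outside the ball. So it suffices to bound $\mathrm{dist}_y(\Pi(\mathbf{i}),\Pi(\pmb{\omega}\mathbf{j}))$ from below by $C\cdot b^n\e{-\varepsilon n}$ for every such $\mathbf{j}$.

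Next let $k$ be the first index at which $\pmb{\omega}$ and $\mathbf{i}$ disagree, so $\mathbf{v}:=\mathbf{i}|_{k-1}=\pmb{\omega}|_{k-1}$ and $\omega_k\neq i_k$; such $k$ exists with $1\le k\le n$ because $\pmb{\omega}\neq\mathbf{i}|_n$. Using $\Pi(\mathbf{i})=S_{\mathbf{v}}(\Pi(\sigma^{k-1}\mathbf{i}))$ and $\Pi(\pmb{\omega}\mathbf{j})=S_{\mathbf{v}}(\Pi(\sigma^{k-1}(\pmb{\omega}\mathbf{j})))$ together with the vertical-contraction identity, and noting that the two preimages again share an $x$-coordinate (as $S_{\mathbf{v}}$ acts invertibly on the first coordinate), I obtain
\[
\mathrm{dist}_y(\Pi(\mathbf{i}),\Pi(\pmb{\omega}\mathbf{j})) = b^{k-1}\,\mathrm{dist}_y\big(\Pi(\sigma^{k-1}\mathbf{i}),\,\Pi(\sigma^{k-1}(\pmb{\omega}\mathbf{j}))\big).
\]

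Finally I would identify the shifted sequence with an element of $Z_1(\sigma^{k-1}\mathbf{i})$: it has first symbol $\omega_k\neq i_k$ and, since the two points share an $x$-coordinate, the same horizontal projection as $\sigma^{k-1}\mathbf{i}$. Therefore the last factor is at least $L(\sigma^{k-1}\mathbf{i})$, and the $\varepsilon$-goodness of $\mathbf{i}$ gives $L(\sigma^{k-1}\mathbf{i})\ge C\e{-\varepsilon(k-1)}$, whence
\[
\mathrm{dist}_y(\Pi(\mathbf{i}),\Pi(\pmb{\omega}\mathbf{j})) \ge C\,(b\,\e{-\varepsilon})^{k-1}.
\]
Since $b\,\e{-\varepsilon}<1$ and $k-1\le n$, the right-hand side is at least $C\,(b\,\e{-\varepsilon})^{n}=C\,b^n\e{-\varepsilon n}$, so $\Pi(\pmb{\omega}\mathbf{j})$ lies outside the open ball, proving \eqref{k7}. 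The only step needing genuine care is the exact vertical-contraction identity, which is precisely where the lower-triangular, diagonally homogeneous hypothesis enters; the rest is bookkeeping about common prefixes, shifts, and membership in $Z_1$.
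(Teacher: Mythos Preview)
Your proof is correct and follows essentially the same route as the paper's: locate the common prefix, use that the shifted point lies in $Z_1$ of the shifted $\mathbf{i}$, apply $\varepsilon$-goodness there, and then push back through the prefix using the exact $b^m$ vertical contraction of the lower-triangular, diagonally homogeneous maps. Your indexing differs by one from the paper's (you call the first disagreement position $k$, the paper calls the common-prefix length $k$), and you argue pointwise over $\Pi(\pmb{\omega}\mathbf{j})$ while the paper argues at the level of the cylinder set $\Lambda_{\omega_{k+1}}$, but these are cosmetic differences.
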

\begin{proof}
  Let  $k:=|\pmb{\omega}\wedge\mathbf{i}|<n$. By assumption we have
\begin{equation}\label{k11}
  \mathrm{dist}_y\left(\Pi(\sigma^k\mathbf{i}),
\Lambda_{\pmb{\omega}_{k+1}}\right) \geq C \cdot \e{-\varepsilon k}.
\end{equation}
Using that $S_{\mathbf{i}|_k}=S_{\pmb{\omega}|_k}$,
$\Pi(\mathbf{i})=S_{\mathbf{i}|_k}(\Pi(\sigma^k\mathbf{i}))$ and
$S_{\pmb{\omega}|_k}(\Lambda_{\omega_{k+1}})=\Lambda_{\pmb{\omega}|_{k+1}}\supset \Lambda_{\pmb{\omega}}$
the statement follows from the fact that $S_{\mathbf{i}|_k}$ contracts in vertical direction by factor $b^k$.
\end{proof}

\begin{lemma}\label{k8}
  Let $\mathbf{i}\in G$ and $\Pi(\mathbf{i})=(a_0,y_0)$. Let $\eta^s$ be a measure on $\xi_{a_0}^s$ and we write $\eta:=(\Pi)_*\eta^s$. Then
\begin{equation}
\liminf\limits_{n\to\infty} \frac{1}{n} \log \eta^s
\left(B_y^s\left(\mathbf{i},b^n\right)\right)
=
\liminf\limits_{n\to\infty} \frac{1}{n} \log \eta
\left(B_y\left(\Pi(\mathbf{i}),b^n\right)\right).
  \end{equation}
  The equality stays true if we replace $\liminf$ by $\limsup$ on both sides.
\end{lemma}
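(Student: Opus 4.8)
The plan is to sandwich the symbolic ball mass $\eta^s\bigl(B_y^s(\mathbf{i},b^n)\bigr)$ between two geometric ball masses $\eta\bigl(B_y(\Pi(\mathbf{i}),r)\bigr)$ at comparable radii $r$, then divide by $n$ and pass to the limit; the goodness of $\mathbf{i}$ will enter only through Claim \ref{k6}. First I would record the harmless symbolic bookkeeping: since $\eta^s$ is carried by $\xi_{a_0}^s$ and $b<1$, one has $B_y^s(\mathbf{i},b^n)\cap\xi_{a_0}^s=[\mathbf{i}|_{n+1}]\cap\xi_{a_0}^s$, so throughout the proof symbolic balls may be replaced by cylinders intersected with the fiber.

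For the easy inclusion I would use that $S_{\mathbf{i}|_{n+1}}$ contracts the vertical direction by exactly $b^{n+1}$: any $\mathbf{j}\in[\mathbf{i}|_{n+1}]\cap\xi_{a_0}^s$ lands on the same fiber $x=a_0$ as $\mathbf{i}$, hence $\Pi(\mathbf{j})$ and $\Pi(\mathbf{i})$ differ only vertically and by at most $b^{n+1}<b^n$. Thus $\Pi\bigl([\mathbf{i}|_{n+1}]\cap\xi_{a_0}^s\bigr)\subseteq B_y(\Pi(\mathbf{i}),b^n)$, which gives
\[
\eta^s\bigl(B_y^s(\mathbf{i},b^n)\bigr)\le\eta\bigl(B_y(\Pi(\mathbf{i}),b^n)\bigr).
\]
Applying $\tfrac1n\log$ and then $\liminf$ (respectively $\limsup$) yields the inequality ``$\le$'' in the Lemma for both quantities, with no loss at all.

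The reverse inclusion is where $\mathbf{i}\in G$ is used. I would fix $\varepsilon>0$; by Claim \ref{k6} there is $C>0$ so that every $\Lambda_{\pmb{\omega}}$ with $\pmb{\omega}\in\Sigma_n$, $\pmb{\omega}\ne\mathbf{i}|_n$, is disjoint from $B_y(\Pi(\mathbf{i}),C b^n\e{-\varepsilon n})$. Since $\Lambda=\bigcup_{\pmb{\omega}\in\Sigma_n}\Lambda_{\pmb{\omega}}$, any $\mathbf{j}$ with $\Pi(\mathbf{j})$ in this ball must satisfy $\mathbf{j}|_n=\mathbf{i}|_n$, so that $\Pi^{-1}\bigl(B_y(\Pi(\mathbf{i}),Cb^n\e{-\varepsilon n})\bigr)\subseteq[\mathbf{i}|_n]$ and therefore
\[
\eta\bigl(B_y(\Pi(\mathbf{i}),Cb^n\e{-\varepsilon n})\bigr)\le\eta^s\bigl([\mathbf{i}|_n]\cap\xi_{a_0}^s\bigr)=\eta^s\bigl(B_y^s(\mathbf{i},b^{n-1})\bigr).
\]

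The hard part will be the radius mismatch created by the factor $\e{-\varepsilon n}$: the geometric radii $b^n$ and $Cb^n\e{-\varepsilon n}$ carry different exponential rates, so after dividing by $n$ I cannot match scales directly. I would resolve this by reparametrizing: given the target radius $b^n$, let $k=k(n)$ be maximal with $b^n\le Cb^{k}\e{-\varepsilon k}$, so that $k(n)/n\to\gamma:=\bigl(1+\varepsilon/(-\log b)\bigr)^{-1}<1$, the sequence $\{k(n)\}$ is cofinal in the integers, and the last display upgrades to $\eta\bigl(B_y(\Pi(\mathbf{i}),b^n)\bigr)\le\eta^s\bigl(B_y^s(\mathbf{i},b^{k(n)-1})\bigr)$. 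Dividing by $n$ and evaluating $\liminf$ (resp.\ $\limsup$) along a subsequence realizing the extremal value of $\tfrac1m\log\eta^s\bigl(B_y^s(\mathbf{i},b^m)\bigr)$ produces the factor $\gamma$. Here the sign bookkeeping is essential: since $\eta^s$ is finite, the normalized logarithms $\tfrac1m\log\eta^s\bigl(B_y^s(\mathbf{i},b^m)\bigr)$ are bounded above by a null sequence, so the relevant $\liminf$ and $\limsup$ are nonpositive, and multiplying a nonpositive number by $\gamma<1$ only increases it, which places the resulting inequality on the correct side up to the factor $\gamma$. Finally, because $\mathbf{i}$ is $\varepsilon$-good for \emph{every} $\varepsilon>0$ and $\gamma\to1$ as $\varepsilon\to0$, letting $\varepsilon\to0$ removes the loss and yields equality of the $\liminf$'s; the $\limsup$ case is word for word the same. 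The two points I would check most carefully are the cofinality of $\{k(n)\}$ (needed to realize the extremal subsequence on the symbolic side) and this sign argument that makes the $\varepsilon\to0$ limit fall the right way.
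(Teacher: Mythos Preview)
Your proof is correct and follows exactly the paper's approach: the ``$\le$'' direction via the $1$-Lipschitz property of $\Pi$ on vertical fibers, and the ``$\ge$'' direction via Claim~\ref{k6} applied to an $\varepsilon$-good $\mathbf{i}$, followed by letting $\varepsilon\to 0$. Your write-up is actually more careful than the paper's (which simply records the inequality $\bigl(1-\varepsilon/\log b\bigr)\liminf_n\frac1n\log\eta^s(\cdots)\ge\liminf_n\frac1n\log\eta(\cdots)$ and lets $\varepsilon\to0$), since you spell out the reparametrization $k(n)$, its cofinality, and the sign argument explicitly.
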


\begin{proof}

Clearly, $\eta$ is supported on $\xi_{a_0}$.
 The direction "$ \leq $" follows from the fact that $\Pi$ restricted to $\xi_{a_0}^s$ is a Lipschitz map (with Lipschitz constant 1). This is true for every $a_0$, hence the inequality in this direction is true for all $\mathbf{i}\in\Sigma$.

Now we verify the "$ \geq $" part.
Fix an $\varepsilon>0$ an $n$
and an $\mathbf{i}\in \Sigma$ which is $\varepsilon$-good with a constant $C>0$.

As $\mathbf{i}$ is $\varepsilon$-good, by \eqref{k7},
\[
\eta(B_y(\Pi(\mathbf{i}), C \cdot b^n\e{-\varepsilon n})) \leq \eta^s (\Pi(\mathbf{i}), b^n).
\]
Passing with $n$ to infinity, we get

\[
\left(1 - \frac \varepsilon {\log b}\right)\liminf\limits_{n\to\infty} \frac{1}{n} \log \eta^s
\left(B_y^s\left(\mathbf{i},b^n\right)\right)
\geq
\liminf\limits_{n\to\infty} \frac{1}{n} \log \eta
\left(B_y\left(\Pi(\mathbf{i}),b^n\right)\right).
\]

As $\varepsilon>0$ can be chosen arbitrarily small, the assertion follows.
 \end{proof}

\subsubsection{$\mu$-almost every point is good}\label{j79}

Our goal is to prove
\begin{proposition}\label{k29}
Let $\mu$ be the uniform distribution on $\Sigma$  and $G\subset\Sigma$ be defined by \eqref{k31}. Then
  \begin{equation}\label{k30}
    \mu(G)=1.
  \end{equation}
\end{proposition}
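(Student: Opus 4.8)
The plan is to reduce $\mu(G)=1$ to the integrability of $-\log L$, and to reduce that in turn to a tail estimate for the distribution of $L$. Since $\Lambda\subset[0,1]^2$ we always have $L(\mathbf{i})\le 1$ (set $L=\infty$, equivalently $-\log L=-\infty$, on the set where $Z_1(\mathbf{i})=\emptyset$, which only helps). Unwinding the definitions, $\mathbf{i}$ is good precisely when $\liminf_{n\to\infty}\frac1n\log L(\sigma^n\mathbf{i})\ge0$: indeed, if this holds then for every $\varepsilon>0$ one has $L(\sigma^n\mathbf{i})\ge \e{-\varepsilon n}$ for all large $n$, and the finitely many remaining terms are absorbed into the constant $C$ provided $L(\sigma^n\mathbf{i})>0$ for all $n$; the converse is immediate. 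As the terms are $\le0$, goodness is equivalent to $\frac1n\log L(\sigma^n\mathbf{i})\to0$. So it suffices to prove that for $\mu$-a.e.\ $\mathbf{i}$ one has $L(\sigma^n\mathbf{i})>0$ for all $n$ and $\frac1n\log L(\sigma^n\mathbf{i})\to0$.

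Both of these follow from the single quantitative input
\[
\int_\Sigma \bigl(-\log L(\mathbf{i})\bigr)^+\,d\mu(\mathbf{i})<\infty ,
\]
which by the layer-cake formula equals $\int_0^1\mu(\{L<t\})\,\tfrac{dt}{t}$. Finiteness forces $\mu(\{L=0\})=0$, and then $\sigma$-invariance of $\mu$ gives $L(\sigma^n\mathbf{i})>0$ for all $n$, a.e. Moreover, $\mu$ is Bernoulli, hence ergodic, so Birkhoff's theorem applied to the integrable nonnegative function $\phi=(-\log L)^+$ shows that $\frac1n\sum_{k<n}\phi(\sigma^k\mathbf{i})$ converges a.e.; a convergent Cesàro average forces its increments to vanish, i.e.\ $\frac1n\phi(\sigma^n\mathbf{i})\to0$, hence $\frac1n\log L(\sigma^n\mathbf{i})\to0$ a.e. This yields $\mu(G)=1$.

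It therefore remains to prove the tail bound, which I would establish in the sharp form $\mu(\{L<t\})\le C\,t^{\alpha}$ for some $C,\alpha>0$. Fix $t$ and let $n$ be determined by $b^{n}\asymp t$. If $L(\mathbf{i})<t$ there is $\mathbf{j}$ with $j_1\ne i_1$, $\Pi_{\mathcal H}(\mathbf{j})=\Pi_{\mathcal H}(\mathbf{i})=:a_0$, and $\Pi(\mathbf{j})$ within vertical distance $t$ of $\Pi(\mathbf{i})$; hence the cylinders $S_{\mathbf{i}|_n}[0,1]^2$ and $S_{\mathbf{j}|_n}[0,1]^2$ both meet the line $\xi_{a_0}$ and are vertically $O(b^n)$-close there. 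Since $i_1\ne j_1$, the transversality condition \eqref{k3}, in its geometric form (cylinders with distinct first symbols that are not disjoint meet at a uniformly positive angle), confines the set of $x\in \mathrm{proj}_x S_{\mathbf{i}|_n}[0,1]^2$ at which such a vertical encounter can occur to a union of intervals of total length $O(b^n)$ times the number of competing words. Pulling these back by $h_{\mathbf{i}|_n}^{-1}$, which scales lengths by $c^{-n}$, and using that $\nu_x$ has an $L^q$ density $g$, Hölder's inequality converts small Lebesgue measure into small $\nu_x$-measure; summing over the $N^n$ cylinders via the self-similar identity $\mu\bigl([\mathbf{i}|_n]\cap\Pi_{\mathcal H}^{-1}F\bigr)=N^{-n}\nu_x\bigl(h_{\mathbf{i}|_n}^{-1}F\bigr)$ should produce a bound decaying like a power of $t$.

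I expect this tail estimate to be the main obstacle, not the soft reduction above. The essential tension is that, because $Nc>1$, a typical vertical line meets exponentially many $n$-cylinders, so a naive union bound over the competing words $\mathbf{j}|_n$ is far too lossy. One must play this horizontal overlap multiplicity against the vertical sparsity coming from $Nb<1$ (Assumption (A2)), and it is precisely the $L^q$-regularity of $\nu_x$ (Assumption (A3)) together with the uniform angle separation from transversality (A4) that makes the two balance. Getting the bookkeeping of the contributing cylinders right is where the real work lies.
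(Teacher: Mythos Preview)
Your soft reduction is correct and is a pleasant variant of what the paper does. You deduce $\tfrac1n\log L(\sigma^n\mathbf{i})\to0$ a.e.\ from the integrability of $(-\log L)^+$ via Birkhoff (using that convergence of Ces\`aro means forces $\tfrac1n\phi(\sigma^n\mathbf{i})\to0$). The paper instead feeds the exponential tail bound directly into Borel--Cantelli: from $\mu(V_{\varepsilon,n})\le c_4 r^n$ with $V_{\varepsilon,n}=\{L<e^{-\varepsilon n}\}$ (this is Lemma~\ref{k35}) one gets $\sum_n\mu(\sigma^{-n}V_{\varepsilon,n})<\infty$, hence a.e.\ eventually $L(\sigma^n\mathbf{i})\ge e^{-\varepsilon n}$; the finitely many remaining $n$ are absorbed into the constant once one knows $\mu(\{L=0\})=0$ (Corollary~\ref{k48}). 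Your route is mildly more conceptual; the paper's is slightly more direct. Both rest on the \emph{same} quantitative input, namely the power-law tail $\mu(\{L<t\})\le Ct^\alpha$, which is exactly Lemma~\ref{k35} after the substitution $t=e^{-\varepsilon n}$.

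On the tail bound itself, your diagnosis of the difficulty is accurate, and your sketch identifies precisely the ingredients the paper uses: transversality to get $|I_{\pmb{\omega},\pmb{\tau}}|\lesssim b^\ell$, the pull-back by $h_{\pmb{\omega}}^{-1}$ gaining a factor $(b/c)^\ell$, and the $L^q$-density to convert Lebesgue measure to $\nu_x$-measure. What is missing from your outline --- and what constitutes the actual work in the paper --- is the mechanism for controlling the multiplicity of competing words $\pmb{\tau}$. The paper does this by excising two ``bad'' sets: $\mathrm{Bad}^1_{\ell,\delta}$ where the density $\varphi$ exceeds $(Nc)^{\delta\ell}$, and $\mathrm{Bad}^2_{\ell,\delta}$ where more than $(Nc)^{(1+\delta)\ell}$ of the intervals $h_{\pmb{\tau}}([0,1])$ overlap. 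Both bad sets have $\nu_x$-measure $\lesssim (Nc)^{-\delta\ell(q-1)}$ (Markov for the first, a Besicovitch-covering argument from the Appendix for the second). Off the bad sets the sum over $\pmb{\tau}$ collapses to $(Nc)^{(1+2\delta)\ell}\cdot(b/c)^\ell=(Nb\,(Nc)^{2\delta})^\ell$, which is exponentially small precisely because $Nb<1$ (your (A2)). This is the bookkeeping you anticipated but did not carry out; without the bad-set decomposition, the summation in your sketch does not close.
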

Assuming this, we obtain
\begin{proof}[Proof of Proposition \ref{k4}]
 The proof immediately follows from
Lemma \ref{k8} and
Proposition \ref{k29}.
\end{proof}
Recall that, as we discussed above,  the proof of Theorem \ref{k17} follows from Proposition \ref{k4}. So the only thing left is to prove Proposition \ref{k29}. To do so we need the following Lemma.
First we introduce
$$
V_{\varepsilon,n}:=\left\{\mathbf{j}:
L(\mathbf{j})<\e{-\varepsilon n}
\right\}.
$$
\begin{lemma}\label{k35}
There exists a constant $c_4>0$ an $r\in(0,1)$ such that for all $n \geq 0$.
\begin{equation}\label{k36}
  \mu\left(V_{\varepsilon,n}\right)<c_4 \cdot r^n.
\end{equation}
\end{lemma}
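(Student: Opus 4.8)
The plan is to choose a scale $m\asymp \varepsilon n/(-\log b)$ so that $b^m\approx\e{-\varepsilon n}=:\delta$, and to show that each $\mathbf{i}\in V_{\varepsilon,n}$ forces its level-$m$ cylinder to come vertically $O(b^m)$-close to a level-$m$ cylinder with a \emph{different} first symbol; a transversality count at scale $m$ then yields geometric decay. Concretely, if $L(\mathbf{i})<\delta$ there is $\mathbf{j}$ with $j_1\ne i_1$, $\Pi_{\mathcal{H}}(\mathbf{j})=\Pi_{\mathcal{H}}(\mathbf{i})=:a$ and $|\mathrm{proj}_y\Pi(\mathbf{i})-\mathrm{proj}_y\Pi(\mathbf{j})|<\delta$. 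I would first invoke Lemma~\ref{k73}: transversality gives the SSP for $\widetilde{\mathcal{S}}$, so the lifted points $(\Pi(\mathbf{i}),\Pi_{AF}(\mathbf{i}))$ and $(\Pi(\mathbf{j}),\Pi_{AF}(\mathbf{j}))$ are uniformly separated; since their planar parts lie within $\delta$, the slopes must differ, $|\Pi_{AF}(\mathbf{i})-\Pi_{AF}(\mathbf{j})|\ge\ell/2$ for large $n$. As the slope IFS $\mathcal{F}$ contracts by $b/c$, the cylinders $S_{\mathbf{i}|_m}[0,1]^2$ and $S_{\mathbf{j}|_m}[0,1]^2$ cross with a definite angle, and the vertical $\delta$-proximity of the two points over $x=a$ pins $a$ to within $O(b^m)$ of $\mathrm{proj}_x(S_{\mathbf{i}|_m}[0,1]^2\cap S_{\mathbf{j}|_m}[0,1]^2)$.

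Transversality now enters quantitatively. For any pair of level-$m$ words $(\mathbf{w},\mathbf{w}')$ with $w_1\ne w_1'$ whose parallelograms meet, \eqref{k3} bounds $\mathrm{proj}_x(S_{\mathbf{w}}[0,1]^2\cap S_{\mathbf{w}'}[0,1]^2)$ by $c_3 b^m$, so the set $J(\mathbf{w},\mathbf{w}')$ of admissible base points $a$ has length $\lesssim b^m$. Hence $V_{\varepsilon,n}$ is contained in the union over such pairs of $[\mathbf{w}]\cap\Pi_{\mathcal{H}}^{-1}(J(\mathbf{w},\mathbf{w}'))$. Conditioned on $[\mathbf{w}]$ the law of $\Pi_{\mathcal{H}}$ is $\nu_x$ pushed by the affine map $h_{\mathbf{w}}$ (contraction ratio $c^m$) onto $I_{\mathbf{w}}=\mathrm{proj}_x(S_{\mathbf{w}}[0,1]^2)$, so $\mu([\mathbf{w}]\cap\Pi_{\mathcal{H}}^{-1}J)=N^{-m}\nu_x(h_{\mathbf{w}}^{-1}J)$. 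Since $|h_{\mathbf{w}}^{-1}J|\lesssim b^m/c^m=(b/c)^m$, Assumption (A3) and Hölder's inequality give $\nu_x(h_{\mathbf{w}}^{-1}J)\le\|\rho\|_q\,(b/c)^{m(1-1/q)}$, whence each contributing pair adds at most $N^{-m}(b/c)^{m(1-1/q)}$.

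It remains to sum over the contributing pairs, and this is the step I expect to be the main obstacle. Summing over the $N^{m}$ choices of $\mathbf{w}$ compensates the factor $N^{-m}$, and the base $(b/c)^{1-1/q}$ is strictly less than $1$ precisely because $b<c$ (which holds throughout the direction-$x$ dominating regime); thus the estimate closes \emph{provided} the number of admissible witnesses $\mathbf{w}'$ for a given $\mathbf{w}$ grows only subexponentially in $m$. This is exactly the delicate point: a naive count of the level-$m$ cylinders whose $x$-shadow meets $I_{\mathbf{w}}$ gives $\asymp (Nc)^m$, which is far too lossy and would force $q$ to be large, incompatible with the single $q>1$ granted by (A3). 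The key observation to be made precise is that only cylinders passing through the $O(b^m)$-thick vertical tube around $S_{\mathbf{w}}$ contribute, and by the SSP of $\widetilde{\mathcal{S}}$ any two such overlapping cylinders are separated in the slope coordinate $\Pi_{AF}$; combined with the $L^q$-regularity of $\nu_x$, which forbids horizontal over-accumulation, this should bound the number of distinct witnesses by a subexponential (indeed polynomial) factor $m^{C}$. Granting such a bound, one obtains $\mu(V_{\varepsilon,n})\lesssim m^{C}(b/c)^{m(1-1/q)}$, and with $m\asymp \varepsilon n/(-\log b)$ this is $\le c_4 r^n$ for some $r\in(0,1)$, which is \eqref{k36}.
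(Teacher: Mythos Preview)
Your scale choice $m\asymp \varepsilon n/(-\log b)$ and the reduction via transversality to short intervals $J(\mathbf{w},\mathbf{w}')$ of length $\lesssim b^m$ match the paper's setup exactly, as does the computation $\mu\bigl([\mathbf{w}]\cap\Pi_{\mathcal{H}}^{-1}J\bigr)=N^{-m}\nu_x(h_{\mathbf{w}}^{-1}J)$. The gap is precisely where you locate it, and your proposed fix does not work.

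The claim that SSP for $\widetilde{\mathcal{S}}$ forces only subexponentially many witnesses $\mathbf{w}'$ per $\mathbf{w}$ is unfounded. SSP gives a uniform lower bound on $|\Pi_{AF}(\mathbf{i})-\Pi_{AF}(\mathbf{j})|$ for $i_1\ne j_1$, i.e.\ a separation of \emph{limit} slopes between different first-level slope cylinders; it says nothing about separation \emph{among} the witnesses $\mathbf{w}'$ themselves, whose level-$m$ slopes can cluster within a window of size $(b/c)^m$. A cylinder of slope differing by a fixed amount from that of $S_{\mathbf{w}}$ crosses the $O(b^m)$-tube around $S_{\mathbf{w}}$ over an $x$-interval of length $\asymp b^m$, but there are $(c/b)^m$ such sub-intervals of $I_{\mathbf{w}}$, and over each one the horizontal overlap multiplicity is typically $(Nc)^m$; nothing here is polynomial. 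Indeed, the paper's own count of close-by cylinder pairs (Lemma~\ref{j97}) gives $B_m^L\le (Nc)^{2m(1+o(1))}$, i.e.\ on the order of $(Nc^2)^m$ witnesses per cylinder on average, and that lemma already requires the much stronger hypothesis (B3) that $\nu_x\in L^q$ for \emph{all} $q$.

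The paper's proof under Assumption~A is organised quite differently and does not attempt to bound the number of witnesses. Instead it removes two exceptional sets from the $x$-axis: $\mathrm{Bad}^1_{\ell,\delta}$ where the density $\varphi$ exceeds $(Nc)^{\delta\ell}$, and $\mathrm{Bad}^2_{\ell,\delta}$ where more than $(Nc)^{(1+\delta)\ell}$ level-$\ell$ horizontal cylinders overlap. Both have $\nu_x$-measure $\lesssim (Nc)^{-\delta(q-1)\ell}$ by Markov and the $L^q$ bound. Off these bad sets one has, for every fixed $\mathbf{w}$, the \emph{pointwise} inequality $\sum_{\mathbf{w}'}\ind_{h_{\mathbf{w}}^{-1}J(\mathbf{w},\mathbf{w}')}(t)\le (Nc)^{(1+\delta)\ell}$, so integrating against $\varphi\le(Nc)^{\delta\ell}$ and using $|h_{\mathbf{w}}^{-1}J|\lesssim(b/c)^\ell$ gives
\[
\sum_{\mathbf{w}'}\nu_x\bigl(h_{\mathbf{w}}^{-1}J(\mathbf{w},\mathbf{w}')\bigr)\ \lesssim\ (Nc)^{(1+2\delta)\ell}\,(b/c)^\ell\ =\ \bigl(Nb\,(Nc)^{2\delta}\bigr)^\ell.
\]
This is where Assumption~(A2), $Nb<1$, is essential: it lets one pick $\delta>0$ with $Nb(Nc)^{2\delta}<1$, closing the estimate with a single $q>1$. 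Your final bound $(b/c)^{m(1-1/q)}$ would need only $b<c$, but that is exactly the regime of Theorem~\ref{j99}, which requires (B3) and a genuinely harder inductive count (Lemma~\ref{j97}); under (A3) alone the condition $Nb<1$ is what makes the argument go through.
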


\begin{proof}
Let $\ell :=\frac{\varepsilon}{-\log b} \cdot n$. Then $b^\ell =\e{-\varepsilon n}$. In the rest of this proof we always suppose that $$\pmb{\omega},\pmb{\tau}\in\Sigma_\ell,\quad \omega_1\ne \tau_1.$$
Recall that
 by assumption there exist a $q>1$ such that $M:=\int \varphi^q(t)dt<\infty $, where $\varphi(t)$ is the density function of $\nu_x$. It follows from Assumption (A2) that we can
choose a $\delta\in(0,1)$ such that
\begin{equation}\label{k45}
  Nb \cdot (Nc)^{2\delta}<1
\end{equation}
 and let
$$
\mathrm{Bad}^1_{\ell ,\delta}:=\left\{t\in[0,1]:
\varphi(t)>\left(Nc\right)^{\delta\ell }
\right\}
$$
and
$$
\mathrm{Bad}^2_{\ell ,\delta}:=
\left\{
t\in[0,1]:
\sum\limits_{|\pmb{\tau}|=\ell }\ind_{h_{\pmb{\tau}}([0,1])}
(t)
 >
(Nc)^{(1+\delta)\ell }
\right\}
$$

It follows from Corollary \ref{L11} of the Appendix
that
\begin{equation}\label{k42}
  \nu_x\left(
\mathrm{Bad}^2_{\ell ,\delta}
\right)
 \leq
c_6  \cdot (Nc)^{-\ell \delta(q-1)},
\end{equation}
where $c_6>0$ is a constant. Furthermore by Markov inequality
$$
M=\int\limits_{0}^{1}\varphi^q(t)dt
=\int\limits_{0}^{1}\varphi^{q-1}(t)d\nu_x(t) \geq
\nu_x(\mathrm{Bad}^1_{\ell ,\delta}) \cdot (Nc)^{(q-1)\delta\ell }.
$$
that is
\begin{equation}\label{k44}
  \nu_x\left(\mathrm{Bad}^1_{\ell ,\delta}\right)
 \leq
M \cdot (Nc)^{-\delta\ell (q-1)}
\end{equation}
Now we define


\begin{equation}\label{j91}
I_{\pmb{\omega},\pmb{\tau}}:=
\mathrm{proj}_x
\left(
S_{\pmb{\omega}}([0,1]^2)
\cap
U_y\left(S_{\pmb{\tau}}([0,1]^2),b^\ell \right)
\right)\setminus \mathrm{Bad}_{\ell ,\delta}^{2}.
\end{equation}

We define
\begin{equation}\label{j90}
  R_{\pmb{\omega},\pmb{\tau}}:=\left(I_{\pmb{\omega},\pmb{\tau}}
\times
[0,1]
\right)\cap
S_{\pmb{\omega}}([0,1]^2).
\end{equation}
That is $R_{\pmb{\omega},\pmb{\tau}}$ consist of those
 elements of $S_{\pmb{\omega}}$ which are "bad" because of $S_{\pmb{\tau}}([0,1]^2)$.
It follows from transversality (Definition \ref{k59} ) that
\begin{equation}\label{k37}
  |I_{\pmb{\omega},\pmb{\tau}}|<3c_3 \cdot b^\ell .
\end{equation}
\begin{figure}
  \centering
  \includegraphics[width=10cm]{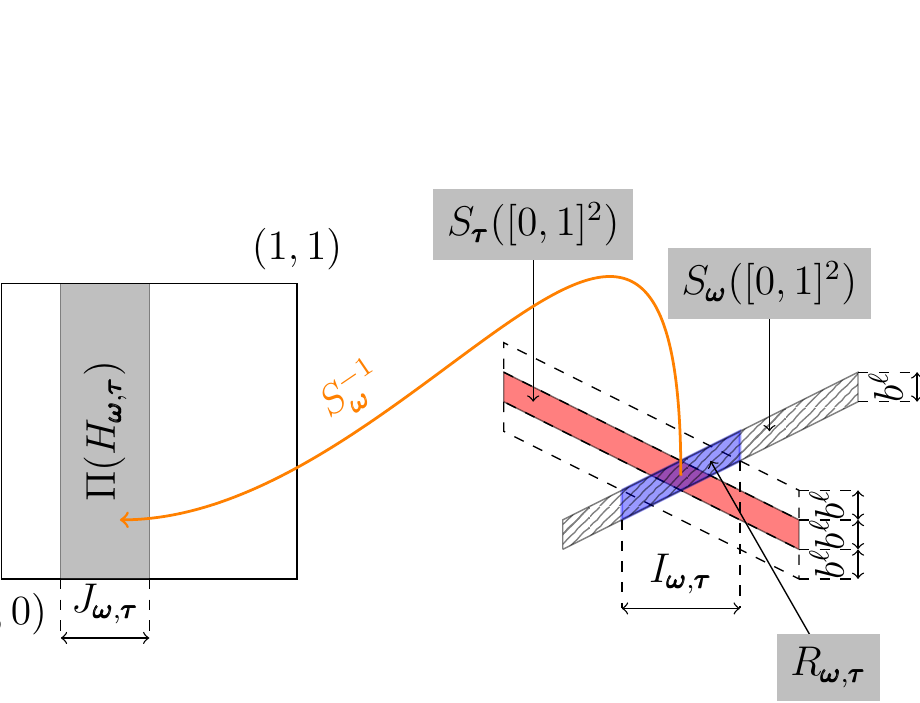}
  \caption{The idea of proof of Lemma \ref{k35}. ($\mathrm{Bad}_{\ell ,\delta}^{1}$, $\mathrm{Bad}_{\ell ,\delta}^{2}$ not included.)}
 \label{k50}
\end{figure}

Let
$$
\widetilde{R}_{\pmb{\omega},\pmb{\tau}}:=
[\pmb{\omega}]\cap\Pi^{-1}\left(
R_{\pmb{\omega},\pmb{\tau}}
\right)
.
$$
The importance of $\widetilde{R}_{\pmb{\omega},\pmb{\tau}}$ is that
it follows from elementary geometry that
\begin{equation}\label{k40}
  \left\{\mathbf{j}\in[\pmb{\omega}]:
\mathrm{proj}_x(\mathbf{j})\not\in \mathrm{Bad}_{\ell ,\delta}^{2},\
\exists \mathbf{i}\in\pmb{\tau},\
\mathrm{dist}_y(\Pi(\mathbf{i}),\Pi(\mathbf{j}))<b^\ell
\right\}\subset\widetilde{R}_{\pmb{\omega},\pmb{\tau}}
\end{equation}

It follows from the construction of $R_{\pmb{\omega},\pmb{\tau}}$ that there is a (possibly empty) interval $J_{\pmb{\omega},\pmb{\tau}}$
such that
$$
S_{\pmb{\omega}}^{-1}(R_{\pmb{\omega},\pmb{\tau}})=
(J_{\pmb{\omega},\pmb{\tau}}\setminus
h_{\pmb{\omega}}^{-1}(\mathrm{Bad}_{\ell ,\delta}^{2})
)\times [0,1],
$$
where the length of the  interval
\begin{equation}\label{k38}
  |J_{\pmb{\omega},\pmb{\tau}}|
 \leq 3c_3\left(\frac{b}{c}\right)^\ell .
\end{equation}
Let
$$
H_{\pmb{\omega},\pmb{\tau}}:=
\Pi^{-1}\left(
(J_{\pmb{\omega},\pmb{\tau}}\setminus
h_{\pmb{\omega}}^{-1}(\mathrm{Bad}_{\ell ,\delta}^{2}))\times [0,1]
\right).
$$
Hence the concatenation of $\pmb{\omega}$ and $H_{\pmb{\omega},\pmb{\tau}}$ is
\begin{equation}\label{k39}
 \widetilde{R}_{\pmb{\omega},\pmb{\tau}}=
\pmb{\omega} H_{\pmb{\omega},\pmb{\tau}}.
\end{equation}
To shorten the notation for an $\pmb{\omega}\in\Sigma_\ell $  we define
\begin{equation}\label{j93}
  D[\pmb{\omega}]:=\left\{\pmb{\tau}\in \Sigma_\ell  , \omega_1\ne\tau_1
\right\}.
\end{equation}

Using \eqref{k40} we can write
\begin{equation}\label{k41}
V_{\varepsilon,n}\subset
\left(\Pi_{\mathcal{H}}^{-1}\left(\mathrm{Bad}_{\ell ,\delta}^{2}\right)\right)\bigcup
\bigcup\limits_{\pmb{\omega}\in\Sigma_\ell  }\bigcup\limits_{\pmb{\tau}
\in D[\pmb{\omega}]}\widetilde{R}_{\pmb{\omega},\pmb{\tau}}.
\end{equation}
Hence, by \eqref{k42} and \eqref{k93} we get
\begin{eqnarray}\label{k43}
  \mu(V_{\varepsilon,n}) &=&
c_6(Nc)^{-\ell \delta(q-1)}
+
\sum\limits_{\pmb{\omega}\in\Sigma_\ell }
\mu\left(
\bigcup\limits_{\pmb{\tau}
\in D[\pmb{\omega}]}
\pmb{\omega}H_{\pmb{\omega},\pmb{\tau}}
\right)
\\
\nonumber&=&
c_6(Nc)^{-\ell \delta(q-1)}
+
   \frac{1}{N^\ell }\sum\limits_{\pmb{\omega}\in\Sigma_\ell }
\mu\left(
\bigcup\limits_{\pmb{\tau}
\in D[\pmb{\omega}]}
H_{\pmb{\omega},\pmb{\tau}}
\right)
\end{eqnarray}

To estimate $ \mu\left(
\bigcup\limits_{\pmb{\tau}
\in D[\pmb{\omega}]}
H_{\pmb{\omega},\pmb{\tau}}
\right)$ we present \bk{ }
$$
\bigcup\limits_{\pmb{\tau}
\in D[\pmb{\omega}]}
H_{\pmb{\omega},\pmb{\tau}}
=\cblue{\Pi^{-1}}\left(
\left(\mathrm{Bad}^1_{\ell ,\delta}
\cup
\bigcup\limits_{\pmb{\tau}
\in D[\pmb{\omega}]}\widetilde{J}_{\pmb{\omega},\pmb{\mathbf{\tau}}}\right)\times [0,1]\right),
$$
where
$$
\widetilde{J}_{\pmb{\omega},\pmb{\tau}}:=
J_{\pmb{\omega},\pmb{\tau}}\setminus
(h_{\pmb{\omega}}^{-1}(\mathrm{Bad}^2_{\ell ,\delta}\cup
\mathrm{Bad}^1_{\ell ,\delta}))
$$
It follows from the definition of the set $\mathrm{Bad}^1_{\ell ,\delta}$ that
\begin{equation}\label{k46}
\mbox{ if }t\in\widetilde{J}_{\pmb{\omega},\pmb{\tau}}\mbox{ then }
\varphi(t)<(Nc)^{\ell \delta}.
\end{equation}
On the other hand, it follows from the definition of $\mathrm{Bad}^2_{\ell ,\delta}$ that
\begin{equation}\label{k47}
  \sum\limits_{\tau \in D[\omega]}
  \ind_{\widetilde{J}_{\pmb{\omega},\pmb{\tau}}}(t) \leq (Nc)^{(1+\delta)\ell },\quad \forall t.
\end{equation}
Putting these together we get \bk{and using \eqref{k44} and \eqref{k38}}

\begin{eqnarray}
  \cblue{\mu}\left(
\bigcup\limits_{\pmb{\tau}
\in D[\pmb{\omega}]}
H_{\pmb{\omega},\pmb{\tau}}
\right)& \leq & \nu_x
\left(
\mathrm{Bad}^1_{\ell ,\delta}\right)
+
\nu_x\left(\bigcup\limits_{\pmb{\tau}\in D[\pmb{\omega}]} \widetilde{J}_{\pmb{\omega},\pmb{\tau}}\right)\\
 \nonumber   & \leq &
\nu_x\left(
\mathrm{Bad}^1_{\ell ,\delta}\right)
+
\int \sum\limits_{\tau \in D[\omega]}
  \ind_{\widetilde{J}_{\omega,\mathbf{\tau}}}(t)\varphi(t)dt
 \\
 \nonumber   & \leq & M \cdot (Nc)^{-\ell \delta(q-1)}
+(Nc)^{(1+\delta)\ell } (Nc)^{\ell \delta}3c_3\left(\frac{b}{c}\right)^\ell\\
 \nonumber   & \leq &
c_7\left(
(Nc)^{-\ell \delta(q-1)}+\left(Nb(Nc)^{2\delta}\right)^\ell
\right)
\end{eqnarray}

Putting this and \eqref{k43} together we obtain that
$$
\mu(V_{\varepsilon,n}) <
(c_6+c_7)
r^n,
$$
where
$$
r:=\max\left\{(Nc)^{-\delta(q-1)},Nb(Nc)^{2\delta}\right\}
^{\frac{\varepsilon}{-\log b}}.
$$

\end{proof}

\begin{corollary}\label{k48}
Let $R_1:=\left\{\mathbf{i}:\exists \mathbf{j}\ne \mathbf{i}, \ \Pi(\mathbf{i})=\Pi(\mathbf{j})\right\}$
Then
  $\mu\left(R_1\right)=0$.
\end{corollary}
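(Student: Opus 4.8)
The plan is to derive this from Proposition \ref{k29} together with the geometric Claim \ref{k6}: I will argue that a \emph{good} sequence can never be one of the two endpoints of a coincidence $\Pi(\mathbf{i})=\Pi(\mathbf{j})$, so that $R_1$ is contained in the complement of the full-measure set $G$ defined in \eqref{k31}, whence $\mu(R_1)=0$.

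Concretely, I would fix $\mathbf{i}\in G$ and suppose for contradiction that there exists $\mathbf{j}\neq\mathbf{i}$ with $\Pi(\mathbf{i})=\Pi(\mathbf{j})$. Since $\mathbf{i}\neq\mathbf{j}$, for all large $n$ the initial words differ, so I may set $\pmb{\omega}:=\mathbf{j}|_n$ with $\pmb{\omega}\neq\mathbf{i}|_n$. Fixing any $\varepsilon>0$ and the corresponding constant $C=C(\mathbf{i},\varepsilon)>0$ coming from $\varepsilon$-goodness, Claim \ref{k6} gives that the vertical ball $B_y(\Pi(\mathbf{i}),C\,b^n\e{-\varepsilon n})$ is disjoint from $\Lambda_{\pmb{\omega}}$. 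The contradiction then comes from two elementary remarks: first, $\Pi(\mathbf{j})=S_{\pmb{\omega}}(\Pi(\sigma^n\mathbf{j}))\in\Lambda_{\pmb{\omega}}$; second, $\Pi(\mathbf{j})=\Pi(\mathbf{i})$ is the centre of that ball and its radius is strictly positive, so $\Pi(\mathbf{j})$ lies in the ball as well. Hence $\Pi(\mathbf{j})$ would belong to $\Lambda_{\pmb{\omega}}\cap B_y(\Pi(\mathbf{i}),C\,b^n\e{-\varepsilon n})=\emptyset$, which is impossible. This shows $G\cap R_1=\emptyset$.

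Finally I would conclude: $R_1\subseteq\Sigma\setminus G$ and $\mu(G)=1$ by Proposition \ref{k29}, so $\mu(R_1)=0$, interpreting $\mu$ on its completion since $R_1$ is a subset of a null set.

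I do not expect a genuine obstacle here: all the substantive work is already carried out in Lemma \ref{k35} and Proposition \ref{k29}. The only points that require a line of care are the measurability issue just mentioned and the verification that the coincidence point simultaneously sits in the cylinder image $\Lambda_{\pmb{\omega}}$ of the attractor and coincides with the centre of the forbidden vertical ball of Claim \ref{k6}; both are immediate once $n$ is chosen large enough that $\mathbf{i}|_n\neq\mathbf{j}|_n$.
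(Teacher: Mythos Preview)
Your observation that $G\cap R_1=\emptyset$ is correct: if $\Pi(\mathbf{i})=\Pi(\mathbf{j})$ with $\mathbf{i}\ne\mathbf{j}$ and $k=|\mathbf{i}\wedge\mathbf{j}|$, then $\sigma^k\mathbf{j}\in Z_1(\sigma^k\mathbf{i})$ and $\Pi(\sigma^k\mathbf{i})=\Pi(\sigma^k\mathbf{j})$, so $L(\sigma^k\mathbf{i})=0$ and $\mathbf{i}$ is not $\varepsilon$-good for any $\varepsilon>0$. The difficulty is that you then invoke Proposition~\ref{k29} to conclude $\mu(G^c)=0$, and this is circular in the paper's logical flow: the proof of Proposition~\ref{k29} \emph{uses} Corollary~\ref{k48}. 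Indeed, the Borel--Cantelli step there produces only a full-measure set $R_2$ on which $L(\sigma^n\mathbf{i})\geq \e{-\varepsilon n}$ for all \emph{sufficiently large} $n$; to upgrade this to $\varepsilon$-goodness one must also exclude $L(\sigma^n\mathbf{i})=0$ for the finitely many remaining $n$, and the paper does precisely that by discarding $R_1$ and citing $\mu(R_1)=0$.

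The paper's proof of Corollary~\ref{k48} breaks the loop by going straight to Lemma~\ref{k35} rather than to Proposition~\ref{k29}: the bound $\mu(V_{\varepsilon,n})<c_4 r^n$ immediately gives $\mu(\{\mathbf{i}:L(\mathbf{i})=0\})=0$, and then the decomposition
\[
R_1=\bigcup_{n=0}^{\infty}\sigma^{-n}\{\mathbf{i}:L(\mathbf{i})=0\}
\]
together with $\sigma$-invariance of $\mu$ yields $\mu(R_1)=0$. You should replace the appeal to Proposition~\ref{k29} by this direct argument from Lemma~\ref{k35}; once that is done, your containment $R_1\subset G^c$ becomes a legitimate (if slightly roundabout) alternative, but as written the dependency on Proposition~\ref{k29} makes the proposal circular.
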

\begin{proof}
It follows from Lemma \ref{k35} that
\begin{equation}\label{k49}
  \mu\left(\mathbf{i}: L(\mathbf{i})=0\right)=0.
\end{equation}
Clearly,
\begin{eqnarray*}
  R_1 &=& \bigcup\limits_{n=0}^{\infty }\left\{\mathbf{i}:
  \exists \mathbf{j},\ |\mathbf{i}\wedge\mathbf{j}|=n,\ \Pi(\mathbf{i})=\Pi(\mathbf{j}).
  \right\}\\
   &=&
   \bigcup\limits_{n=0}^{\infty }\sigma^{-n}\left\{\mathbf{i}:
  \exists \mathbf{j},\ i_1\ne j_1,\ \Pi(\mathbf{i})=\Pi(\mathbf{j}).
  \right\}
   \\
   &=& \bigcup\limits_{n=0}^{\infty }
  \sigma^{-n}\left\{\mathbf{i}:L(\mathbf{i})=0\right\}
\end{eqnarray*}
 This and \eqref{k49} imply that $\mu(R_1)=0$.
\end{proof}
The Proposition \ref{k29} is now immediate:
\begin{proof}[Proof of Proposition \ref{k29}]
$$
\left\{\mathbf{i}:
L(\sigma^{n}\mathbf{i})<\e{-\varepsilon n}
\right\}=\sigma^{-n}\left(V_{\varepsilon n}\right).
$$
From this and Lemma \ref{k35} we get that
$$
\sum\limits_{n}\mu
\left(\left\{\mathbf{i}:
L(\sigma^{n}\mathbf{i})<\e{-\varepsilon n}
\right\}\right)<\infty .
$$
Let $R_2:=\left\{\mathbf{i}:\exists N_0, \forall n>N_0\quad
L(\sigma^{n}\mathbf{i}) \geq \e{-\varepsilon n}
\right\}$. Then $\mu(R_2)=1$. It is immediate to see that every
$\mathbf{i}\in R_2\setminus R_1$ is $\varepsilon$-good. This means that the set $\varepsilon$-good $\mathbf{i}$ form a set of full measure. By taking a countable intersection we get that  $\mu(G)=1$.
\end{proof}

\subsection{$L^q \forall q$ density case}

In this section we will give the proof of Theorem \ref{j99}. In this subsection we assume that $\nu_x$ is absolute continuous with $L^q$ density for all $q>1$, we also assume the transversality condition.

In the previous subsection we proved Lemma \ref{k35}. In the calculation of $\mu(V_{\varepsilon, \ell })$ we were not able to make use of the following
fact: if the intervals
$\mathrm{proj}_x{S_{\pmb{\omega}}([0,1]^2)}$ and $\mathrm{proj}_x{S_{\pmb{\tau}}([0,1]^2)}$ intersect for some $\pmb{\omega},\pmb{\tau}\in\Sigma_\ell $,
it does not necessarily mean that the  parallelograms $S_{\pmb{\omega}}([0,1]^2)$ and $S_{\pmb{\tau}}([0,1]^2)$  intersect as well.
Under the $L^q \forall q$ assumption this distinction can be made.


\subsubsection{Number of pairs of intersecting cylinder parallelograms}\label{j68}
In this subsection we give an upper bound on the number of intersecting (or close-by) level $\ell $  cylinders with distinct first coordinates.
To state the lemma we need some preparation.

We write
\begin{equation}\label{j92}
  \Sigma_{\ell}^{2,\mathrm{diff}}:=\left\{(\pmb{\omega},\pmb{\tau})\in\Sigma_\ell \times\Sigma_\ell :\omega_1\ne\tau_1\right\}.
\end{equation}
For an $\pmb{\omega}\in\Sigma_\ell $ and $L>0$ let
\begin{equation}\label{j85}
  P_{\pmb{\omega}}^L:=
U_{y}\left(S_{\pmb{\omega}}([0,1]^2),Lb^\ell \right) \mbox{ and }
 I_{\pmb{\omega}}:=\mathrm{proj}_x{S_{\pmb{\omega}}([0,1]^2)}=\mathrm{proj}_x{P_{\pmb{\omega}}}
\end{equation}
Further, for $\pmb{\omega},\pmb{\tau}\in\Sigma_{\ell}^{2,\mathrm{diff}} $ we write
\begin{equation}\label{j86}
  P_{\pmb{\omega},\pmb{\tau}}^L:=P_{\pmb{\omega}}^L\cap P_{\pmb{\tau}}^L\supset
R_{\pmb{\omega},\pmb{\tau}},
\end{equation}
where $R_{\pmb{\omega},\pmb{\tau}}$ was defined in \eqref{j90}
and
\begin{equation}\label{j87}
 I^L_{\pmb{\omega,\pmb{\tau}}}:=\mathrm{proj}_x{P^L_{\pmb{\omega},\pmb{\tau}}}.
\end{equation}
Note that by transversality
\begin{equation}\label{j84}
  |I^L_{\pmb{\omega,\pmb{\tau}}}|<(L+2)c_3 \cdot
b^\ell .
\end{equation}

Let \bk{}
\begin{equation}\label{j61}
  B_{\cblue{\ell }}^L:=\#\left\{(\pmb{\omega},\pmb{\tau})\in
\Sigma_{\ell}^{2,\mathrm{diff}} :
P^L_{\pmb{\omega},\pmb{\tau}}\ne \emptyset
\right\}.
\end{equation}



\begin{lemma} \label{j97}Assume that Conditions (B1), (B3) and (B4) hold. 
Then for every $L>0$
\[
\limsup_{n\to\infty} \frac 1n \log B_n^L \leq \log (N^2c^2).
\]
\end{lemma}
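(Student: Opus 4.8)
The plan is to estimate $B_\ell^L$ by controlling, for a fixed $\pmb{\omega}\in\Sigma_\ell$, how many $\pmb{\tau}\in D[\pmb{\omega}]$ can produce a nonempty $P^L_{\pmb{\omega},\pmb{\tau}}$. The key geometric input is that whenever $P^L_{\pmb{\omega},\pmb{\tau}}\neq\emptyset$, the projected interval $I^L_{\pmb{\omega},\pmb{\tau}}=\mathrm{proj}_x P^L_{\pmb{\omega},\pmb{\tau}}$ is nonempty and, by transversality \eqref{j84}, has length at most $(L+2)c_3 b^\ell$. So $P^L_{\pmb{\omega},\pmb{\tau}}\neq\emptyset$ forces $I_{\pmb{\tau}}$ to meet a short interval around $I_{\pmb{\omega}}$ of length comparable to $b^\ell$. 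First I would count, for a typical $x$-coordinate $t$, how many level-$\ell$ cylinders $\mathrm{proj}_x S_{\pmb{\tau}}([0,1]^2)=h_{\pmb{\tau}}([0,1])$ contain $t$; this is exactly the overlap multiplicity controlled by the set $\mathrm{Bad}^2_{\ell,\delta}$ from the previous subsection. Under the $L^q$-for-all-$q$ hypothesis (B3) one expects the multiplicity to be subexponential off a set of $\nu_x$-measure decaying like $(Nc)^{-\ell\delta(q-1)}$ for every $\delta$, which is the crucial improvement over (A3).

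\textbf{Reducing the count to an integral over the density.}

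Next I would write $B_\ell^L$ as a double sum and bound it by integrating the product of two multiplicity functions against the reference measure. Concretely, fixing $\pmb{\omega}$, the number of admissible $\pmb{\tau}$ is bounded by the number of level-$\ell$ horizontal cylinders whose projection meets $I^L_{\pmb{\omega},\pmb{\tau}}$, an interval of length $O(b^\ell)$. Summing over $\pmb{\omega}$ and using that the total number of $\pmb{\omega}$ is $N^\ell$, one gets something of the shape
\[
B_\ell^L \lesssim \int_0^1 \Big(\sum_{|\pmb{\tau}|=\ell}\ind_{h_{\pmb{\tau}}([0,1])}(t)\Big)^2\,dt \;+\;(\text{error from }\mathrm{Bad}\text{ sets}),
\]
where the square reflects that both $\pmb{\omega}$ and $\pmb{\tau}$ must project near $t$. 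Expanding the square, the diagonal-type contribution is controlled by the $L^2$ norm of the empirical counting measure, and off the bad sets the counting function is at most $(Nc)^{(1+\delta)\ell}$ while the density $\varphi$ is at most $(Nc)^{\delta\ell}$; combined with the length bound $O(b^\ell)$ per interval and the total mass $N^\ell$ of symbols, the main term becomes of order $N^\ell\cdot (Nc)^{(1+\delta)\ell}\cdot b^\ell$ times lower-order density factors. The point is that $N^\ell\cdot N^\ell c^\ell\cdot b^\ell = (N^2c)^\ell b^\ell$ must be reorganized, using $Nb<1$ is \emph{not} assumed here—only $b<c$ (B2)—so the arithmetic must be done carefully so that the exponential rate collapses to $N^2c^2$.

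\textbf{Extracting the rate and sending the auxiliary parameters to their limits.}

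Having bounded $B_\ell^L$ by a sum of finitely many exponential terms, I would take $\tfrac1\ell\log$, then $\limsup_{\ell\to\infty}$, obtaining a maximum of the exponents of those terms; the error terms coming from $\mathrm{Bad}^1_{\ell,\delta}$ and $\mathrm{Bad}^2_{\ell,\delta}$ contribute rates that tend to $-\infty$ as $q\to\infty$ (since (B3) gives every $q$) and hence are harmless after letting $\delta\to 0$ and $q\to\infty$. The target bound $\log(N^2c^2)$ should emerge as the rate of the genuine intersection term once $\delta\to0$. The main obstacle, as I see it, is the combinatorial bookkeeping in the double count: one must argue that the $L^2$-type overlap bound really gives rate $N^2c^2$ and not something larger, which requires using transversality \eqref{j84} to get the factor $b^\ell$ per intersecting pair \emph{together with} the density bound to convert horizontal overlap counts into $\nu_x$-measure, all while the weaker hypothesis $b<c$ (rather than $Nb<1$) means the naive estimate does not close and the $L^q$-for-all-$q$ strength of (B3) must be exploited to kill the exponential error in $\delta$ and $q$. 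Verifying that these two effects balance to exactly $N^2c^2$ is where the care lies; the rest is the same transversality-plus-density machinery already developed for Lemma \ref{k35}.
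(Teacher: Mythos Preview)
Your single--scale approach has a genuine gap and will not reach the rate $\log(N^2c^2)$.

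First, there is a circularity in the counting step. You write that, for fixed $\pmb{\omega}$, the admissible $\pmb{\tau}$ are those whose horizontal cylinder meets the short interval $I^L_{\pmb{\omega},\pmb{\tau}}$ of length $O(b^\ell)$. But the \emph{location} of $I^L_{\pmb{\omega},\pmb{\tau}}$ already depends on $\pmb{\tau}$; transversality controls only its length, not where along $I_{\pmb{\omega}}$ it sits. So this interval cannot be used a priori to restrict the sum over $\pmb{\tau}$.

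If you retreat to the only $\pmb{\tau}$-independent constraint available at a single scale, namely $I_{\pmb{\tau}}\cap I_{\pmb{\omega}}\ne\emptyset$, then the $L^2$ overlap integral $\int_0^1\bigl(\sum_{|\pmb{\tau}|=\ell}\ind_{h_{\pmb{\tau}}([0,1])}\bigr)^2$ is $\approx (Nc)^{2\ell}$, giving roughly $(N^2c)^\ell$ pairs with overlapping horizontal projections. This is the best a single-scale density argument can yield, and it is one full factor of $c^\ell$ short of the target. The missing information is the \emph{vertical} proximity encoded in $P^L_{\pmb{\omega},\pmb{\tau}}\ne\emptyset$, which the horizontal $L^2$ count cannot see. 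No amount of sending $q\to\infty$, $\delta\to0$ fixes this; those limits kill subexponential errors, not an $O(c^{-\ell})$ gap.

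The paper's proof supplies precisely the idea you are missing: a multi-scale induction. One chooses scales $n_0=n$ and $n_{k+1}=\lfloor (\log c/\log b)\,n_k\rfloor$, so that $b^{n_{k+1}}\approx c^{n_k}$. If two level-$n_k$ thickened parallelograms intersect, then (a) their level-$n_{k+1}$ prefixes do too, and (b) the level-$n_k$ horizontal cylinders $I_{\pmb{\omega}},I_{\pmb{\tau}}$ must lie inside the \emph{prefix} intersection interval $I^L_{\pmb{\omega}|_{n_{k+1}},\pmb{\tau}|_{n_{k+1}}}$, which by transversality has length $\lesssim b^{n_{k+1}}\approx c^{n_k}$. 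Since this interval depends only on the shorter prefixes, Corollary~\ref{cor:add2} now legitimately bounds the number of level-$n_k$ extensions by $C(q)(Nc)^{n_k-n_{k+1}}c^{-(n_k-n_{k+1})/q}$ each, yielding the recursion
\[
B^L_{n_k}\le B^L_{n_{k+1}}\cdot\bigl(C(q)(Nc)^{n_k-n_{k+1}}c^{-(n_k-n_{k+1})/q}\bigr)^2.
\]
Iterating down to $n_{\mathcal K}\approx\widetilde n$ and letting $q\to\infty$ gives the rate $\log(N^2c^2)$. The two-scale step is exactly what converts the vertical closeness at level $n_{k+1}$ into a horizontal constraint at level $n_k$, and this is where the second factor of $c$ comes from.
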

\begin{proof}


	
Choose some $\widetilde{n}>1$. Let $n\geq \widetilde{n}$. Let us define a finite sequence $n_i$ as follows: $n_0:=n$ and
\begin{equation}\label{j96}
n_k:=\left\lfloor\left(\frac{\log c}{\log b}\right)^kn\right\rfloor.
\end{equation}
Let ${\mathcal{K}}:=\max\{k\geq0:n_k\geq \widetilde{n}\}$. Naturally, \bk{}
\[
B_{n_{\mathcal{K}}}^L \leq N^{{2}n_{\mathcal{K}}}.
\]
We show that for every $q>1$ there exists $K=K(q, L)$ such that for every $0\leq k\leq{\mathcal{K}}$
\begin{equation}\label{eq:ind}
B_{n_{k}}^L \leq B_{n_{k+1}}^L \cdot \left(K (Nc)^{n_{k}-n_{k+1}} c^{-(n_k-n_{k+1})/q}\right)^2.
\end{equation}

Let $\pmb{\omega},\pmb{\tau}\in\Sigma_{n_{k}}$. Assume that
\begin{equation}\label{j95}
  P_{\pmb{\omega}}^L\cap P_{\pmb{\tau}}^L\ne\emptyset.
\end{equation}
Observe that this is possible only if both
\begin{description}
  \item[(a)] $P^L_{\pmb{\omega}|_{n_{k+1}}}\cap P^L_{\pmb{\tau}|_{n_{k+1}}}\ne\emptyset$ and
  \item[(b)] $I_{\pmb{\omega}}\cap I^L_{\pmb{\omega}|_{n_{k+1}},\pmb{\tau}|_{n_{k+1}}}\ne\emptyset$ and
$I_{\pmb{\tau}}\cap I^L_{\pmb{\omega}|_{n_{k+1}},\pmb{\tau}|_{n_{k+1}}}\ne\emptyset$
\end{description}
hold.
By \eqref{j84} and  \eqref{j96}
$$
|I_{\pmb{\omega}|_{n_{k+1}},\pmb{\tau}|_{n_{k+1}}}| \leq
(L+2)c_3 b^{n_{k+1}} \approx c^{n_{k}}
$$
Hence by Corollary \ref{cor:add2}, for every $\pmb{\omega}|_{n_{k+1}}, \pmb{\tau}|_{n_{k+1}}$ there are at most $C(L, q) (Nc)^{n_{k}-n_{k+1}} c^{-(n_k-n_{k+1})/q}$ words $\pmb{\omega}\in\Sigma_{n_k}$
 such that
$I_{\pmb{\omega}}\cap I^L_{\pmb{\omega}|_{n_{k+1}},\pmb{\tau}|_{n_{k+1}}}\ne\emptyset$.
In the same way there are at most $C(L, q) (Nc)^{n_{k}-n_{k+1}} c^{-(n_k-n_{k+1})/q}$ words $\pmb{\tau}\in\Sigma_{n_k}$
 such that
$I_{\pmb{\tau}}\cap I^L_{\pmb{\omega}|_{n_{k+1}},\pmb{\tau}|_{n_{k+1}}}\ne\emptyset$.
 So, taking into consideration condition \textbf{(a)} above,
we obtain \eqref{eq:ind}.

Thus, by induction \bk{}
\[
B_{n_{0}}^L \leq (K)^{2\mathcal{K}} N^{\cblue{2}n_{\mathcal{K}}} (Nc)^{2(n_0-n_{\mathcal{K}})} c^{-2(n_0-n_{\mathcal K})/q}.
\]
But by definition of the sequence $\{n_k\}$, $n_0=n$, $n_{ \mathcal{K}}\approx \widetilde{n}$, and there exist constants $c_1,c_2\in\R$ such that $ \mathcal{K}\leq c_1\log n+c_2$. Therefore, 
$$
B_n^L\leq C(q,L) \cdot K(q,L)^{2c_1\log n} \cdot c^{-n/q} \cdot (Nc)^{2n} ,
$$
and passing with $q$ to infinity proves the assertion.
\end{proof}

\subsubsection{The corellation dimension}

 First we recall the definition of the correlation dimension (see \cite{Lau}).

\begin{definition}\label{j77}
  Let $\mathrm{m}$ be a positive bounded regular Borel measure on $\mathbb{R}^d$ with bounded support $\mathrm{spt}(\mathfrak{m})$. For every $r>0$ let $\left\{B_{i}^{(r)}\right\}_i$ be the $r$-mesh cubes that intersect $\mathrm{spt}(\mathfrak{m})$. For a $q>0$, $q\ne 1$ we define
  \begin{equation}\label{j76}
    \tau(q):=
    \liminf\limits_{r\to 0^+}
    \frac{\log \sum\limits_{i}\mathfrak{m}(B_{i}^{(r)})^q}{\log r}.
  \end{equation}
  Equivalently we could define $\tau(q)$ (see \cite{Lau}) in either of the following two ways:
  let
  \begin{equation}\label{j75}
    I_r(q):=\int\limits_{\mathbb{R}^d}
    \mathfrak{m}\left(B(x,r)\right)^qdx
    \mbox{ and }
    \widetilde{I}_r(q):=\int\limits_{\mathbb{R}^d}
    \mathfrak{m}\left(B(x,r)\right)^{q-1}d\mathfrak{m}(x).
  \end{equation}
  Then
  \begin{equation}\label{j74}
    \tau(q)
    =
  \liminf\limits_{r\to 0^+} \frac{\log I_r(q)}{\log r}
-d
    =
  \liminf\limits_{r\to 0^+} \frac{\log \widetilde{I}_r(q)}{\log r}
  -d.
  \end{equation}
  For $q>1$ the $L^q$-dimension of $\mathfrak{m}$ is defined as
  \begin{equation}\label{j73}
    \underline{\dim}_{\rm q}\left(\mathfrak{m}\right): =\frac{\tau(q)}{q-1}.
  \end{equation}
 \end{definition}
It was proved by Hunt and Kaloshin \cite[Proposition 2.1]{Hunt} that
\begin{align}
 \label{j72}  \underline{\dim}_{\rm q}\left(\mathfrak{m}\right) & =\sup\left\{t \geq 0:
 \int\left(\int
 \frac{d\mathfrak{\mathfrak{m}}(y)}{|x-y|^t}
 \right)^{q-1} d\mathfrak{m}(x)<\infty
 \right\} \\
 \nonumber  &= \inf\left\{t \geq 0:
 \int\left(\int
 \frac{d\mathfrak{\mathfrak{m}}(y)}{|x-y|^t}
 \right)^{q-1} d\mathfrak{m}(x)=\infty
 \right\}
 \\
 \nonumber  &=
 \sup\left\{s:
 \int\limits_{0}^{\infty }r^{-s(q-1)-1}\widetilde{I}_r(q)dr<\infty
 \right\}.
\end{align}
If we apply this for $q=2$ we get the correlation dimension of the measure $\mathfrak{m}$
$$
\dim_{\rm C}(\mathfrak{\mathfrak{m}}):= \underline{\dim}_{\rm 2}\left(\mathfrak{m}\right).
$$
It follows from \eqref{j72} that
\begin{equation}\label{j71}
  \dim_{\rm C}(\mathfrak{\mathfrak{m}}) \leq \dim_{\rm H} (\mathfrak{m}).
\end{equation}


\begin{proof}[Proof of Theorem \ref{j99}]
We will prove that under Assumption B
\[
\dim_C \nu \geq \min\left(2, 1 + \frac {\log (Nc)} {-\log b}\right).
\]

Let $\ell > 0$ and consider the grid of size $r=2b^\ell$. Let $B=[x-b^\ell, x+b^\ell]\times [y-b^\ell, y+b^\ell]$ be one of the $r$-mesh cubes. Let $|\pmb{\tau}|=\ell$ be such that $B$ intersects $S_{\pmb{\tau}}([0,1]^2)$. Then $S_{\pmb{\tau}}^{-1}(B\cap S_{\pmb{\tau}}([0,1]^2))$ is contained in some vertical strip of width $b^\ell/c^\ell$, hence by Corollary \ref{cor:add2}
\[
\nu(S_{\pmb{\tau}}^{-1}(B\cap S_{\pmb{\tau}}([0,1]^2))) \leq C(q) (b/c)^{\ell(1-1/q)}.
\]

Let $L:= (c-b)^{-1}\cdot \max\limits_{i \leq m}|d_i|$ be \bk{an upper bound on the } maximal slope of (the principal axis of) our cylinders. Then if the cylinder $S_{\pmb{\tau}}([0,1]^2), |\pmb{\tau}|=\ell$ intersects $B$, it must also intersect at least one of vertical intervals: either $\{x-b^\ell\} \times [y-(2L+1)b^\ell, y+(2L+1)b^\ell]$ or $\{x+b^\ell\} \times [y-(2L+1)b^\ell, y+(2L+1)b^\ell]$. Let us denote    the number of cylinders $S_{\pmb{\tau}}([0,1]^2)$ intersecting $\{x-b^\ell\} \times [y-(L+1)b^\ell, y+(L+1)b^\ell]$ by $Z_1(x,y)$
and the number of cylinders  intersecting $\{x+b^\ell\} \times [y-(L+1)b^\ell, y+(L+1)b^\ell]$ by $Z_2(x,y)$.
Using a similar estimate as  in the proof of Lemma \ref{k35}, in equation \eqref{k43}
we can write

\[
\nu(B) \leq C(q) N^{-\ell} \cdot (b/c)^{\ell(1-1/q)} \cdot (Z_1(x,y)+Z_2(x,y)),
\]
hence \bk{}
\[
(\nu(B))^2 \leq C(q)^{\cblue{2}} N^{-2\ell} \cdot (b/c)^{2\ell(1-1/q)} \cdot (2Z_1^2(x,y) + 2Z_2^2(x,y)).
\]
Thus, \bk{}
\[
\sum \nu(B_i^{(r)})^2 \leq C(q)^{\cblue{2}} N^{-2\ell} (b/c)^{2\ell(1-1/q)} \cdot \left(\sum Z_1^2(x_i, y_i) + \sum Z_2^2(x_i, y_i)\right),
\]
\cblue{where $B_i^{(r)}$ is the $i$-th $r$-mesh cube}. \bk{}
It is enough to estimate the first sum, the second is analogous. The interval $\{x-b^\ell\} \times [y-(2L+1)b^\ell, y+(2L+1)b^\ell]$ intersects $S_{\pmb{\tau}}([0,1]^2)$ if and only if the point $(x-b^\ell,y)$ is contained in $S_{\pmb{\tau}}([0,1]\times [-2L-1,2L+2])$. Hence, $Z_1^2(x, y)$ is equal to the number of pairs $\pmb{\omega},\pmb{\tau}\in\Sigma_\ell\times \Sigma_\ell$ such that $P_{\pmb{\omega},\pmb{\tau}}^{2L+1}$ contains $(x-b^\ell,y)$.

For every $\pmb{\omega},\pmb{\tau}\in\Sigma_\ell$ we write $k:=|\pmb{\omega}\wedge\pmb{\tau}|$. Then  for $\pmb{\alpha}:=\pmb{\omega}\wedge\pmb{\tau}\in \Sigma_k$, there exists
$\pmb{\beta},\pmb{\gamma}\in\Sigma_{\ell-k}$ such that
$$
\pmb{\omega}=\pmb{\alpha}\pmb{\beta}
\mbox{ and }
\pmb{\tau}=\pmb{\alpha}\pmb{\gamma}.
$$

\begin{figure}[H]
\vspace{-0.5cm}
  \centering
  \includegraphics[width=9cm]{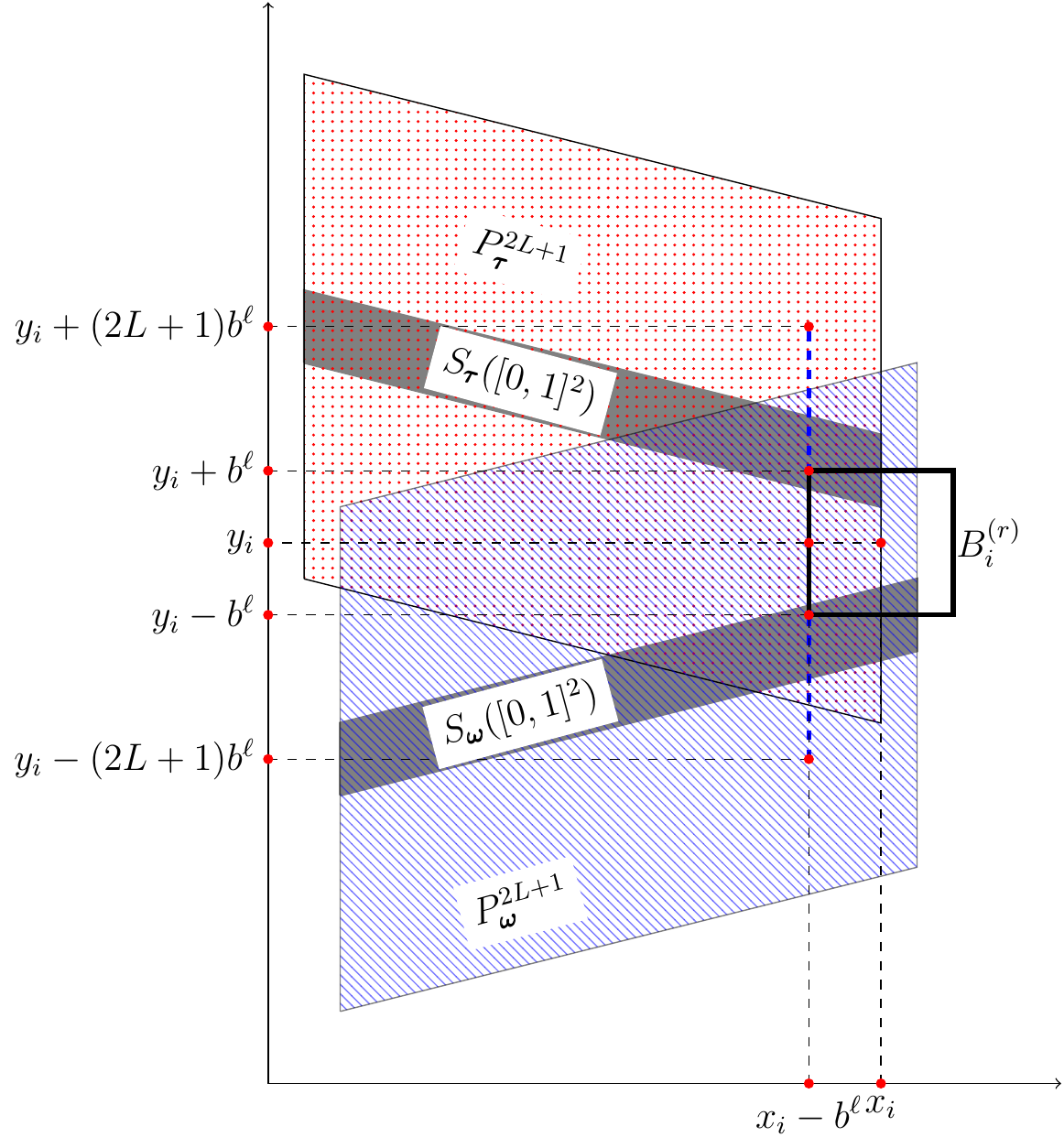}
  \caption{The region where (D4) holds.}\label{j35}
\end{figure}
With this notation we present this sum as

\[
\sum Z_1^2(x_i, y_i) = \sum_{k=0}^\ell  \sum_{\pmb{\alpha}\in\Sigma_k}
   \sum_{(\pmb{\beta},\pmb{\gamma})\in\Sigma_{\ell-k}^{2,\mathrm{diff}} } |\{i; (x_i-b^\ell, y_i) \in P_{\pmb{\omega},\pmb{\tau}}^{2L+1}\}|.
\]
Given $\pmb{\alpha}$, the last sum can be estimated by
\begin{equation}\label{k96}
  \sum_{(\pmb{\beta},\pmb{\gamma})\in\Sigma_{\ell-k}^{2,\mathrm{diff}} } |\{i; (x_i-b^\ell, y_i) \in P_{\pmb{\omega},\pmb{\tau}}^{2L+1}\}| \leq C B^{2L+1}_{\ell-k} \left(\frac cb \right)^k.
\end{equation}
Namely, by the Transversality Condition we can apply
\eqref{j84} which yields that \newline
 $\cblue{|I_{\pmb{\beta},\pmb{\gamma}}^{2L+1}| \leq (2L+5)c_3b^{\ell -k}}$ \cblue{and}
 $\cblue{|I_{\pmb{\omega},\pmb{\tau}}^{2L+1}|=c^k \cdot |I_{\pmb{\beta},\pmb{\gamma}}^{2L+1}|}$. Thus we have only $C \cdot (c/b)^k$ different $(x_i-b^\ell)$'s in \bk{} $\cblue{I}_{\pmb{\omega},\pmb{\tau}}^{2L+1}$ and for each of them we have at most $2L+2$ different $y_i$' in $P_{\pmb{\omega},\pmb{\tau}}^{L+1}$. Then \eqref{k96} follows from
 \eqref{j61}.

Applying Lemma \ref{j97} and noting that $\pmb{\alpha}$ can take $N^k$ values, we get

\[
\sum Z_1^2(x_i, y_i) \leq C N^{2\ell} c^{2\ell} o(N^{\ell \varepsilon}) \cdot \sum_{k=0}^\ell (Ncb)^{-k}.
\]
The last sum is a geometric series, hence it is bounded by a constant when $Ncb>1$ and by $(Ncb)^{-\ell}$ when $Ncb<1$. When $Ncb=1$ this sum equals $\ell +1 = o(N^{\ell \varepsilon})$. Thus,\bk{}

\[
\sum \nu(B_i^{(r)})^2 \leq C b^{\ell \cdot \min(2, 1+\log (Nc)/-\log b)} (b/c)^{-\cblue{2}\ell/q} o(N^{2\ell \varepsilon}).
\]
Passing with $q$ to infinity and $\varepsilon$ to 0 we get

\[
\tau(2) \geq \min \left(2, 1+\frac {\log (Nc)} {-\log b} \right).
\]
\end{proof}

\section{Direction-$y$ dominates}\label{sec5}

Finally, in this section we turn to the case when the direction-$y$ dominates. In this direction, we have only a mild development on the way of understanding the overlapping self-affine systems. The result can be considered as an extension of \cite[Theorem~B]{barany2016dimension} and \cite[Theorem~4.8, Theorem~4.11]{barany2014ledrappier}.

Similarly to the Section~\ref{k68}, we define the backward Furstenberg measure and IFS. This measure is supported on the directions, associated to the strong-stable directions. We note that in the case, when direction-$x$ dominates, the backward Furstenberg measure is supported on the singleton $\{(0,1)\}$.

Consider again the vertical line  $\xi:=\left\{(1,z)\in\mathbb{R}^2:z\in \mathbb{R}\right\}$ on the plane and  identify $(1,z)\in\xi$ with $\widetilde{z}\in \mathbb{R}$. Moreover, let $\mathcal{B}$ be the self-similar IFS on $\xi$ defined by
\begin{equation}\label{eq:backfurst}
\mathcal{B}:=\left\{g_i(\widetilde{z}):=\frac{c}{b}\widetilde{z}-\frac{d_i}{b}\right\}_{i=1}^{N},
\end{equation}
Let us define the natural projection by
$\Pi_{BF}:\Sigma\to\xi$ in the usual way:
\begin{equation}
\Pi_{BF}(\mathbf{i}):=-\frac{d_{i_1}}{b}-
\sum\limits_{k=2}^{\infty }\frac{d_{i_k}}{b} \cdot
\left(\frac{c}{b}\right)^{k-1}.
\end{equation}

Similarly, to \eqref{k72}, we have that the action of $\left\{T_i^{-1}\right\}_{i=1}^{N}$ on the projective line is described by the maps
$\widehat{T}_i:\xi\to\xi$
$$
\widehat{T}_i(\widetilde{z}):=c \cdot T_i^{-1} \cdot\left(
\begin{array}{c}
1 \\
z \\
\end{array}
\right),
$$
where $\widetilde{z}\in\xi$ is $\widetilde{z}=(1,z)$.

\begin{assumptionC}\label{assD}\ We assume that
	
	
	(C1) $c<\frac{1}{N}$,
	
	(C2) $b>c$
	
	(C3) The backward Furstenberg IFS $\mathcal{B}$ satisfies Hochman's exponential separation condition,

   (C4) $\mathcal{H}$ satisfies Hochman's exponential separation condition,
	\begin{flalign*}
	\phantom{=}\text{(C5) }\frac{\log N}{\log(b/c)}\geq\min\left\{1,\frac{\log N}{-\log b},2\left(1-\frac{\log N}{-\log c}\right)\right\}.&&
	\end{flalign*}
\end{assumptionC}

\begin{figure}[H]
\vspace{-0.5cm}
  \centering
  \includegraphics[width=9cm]{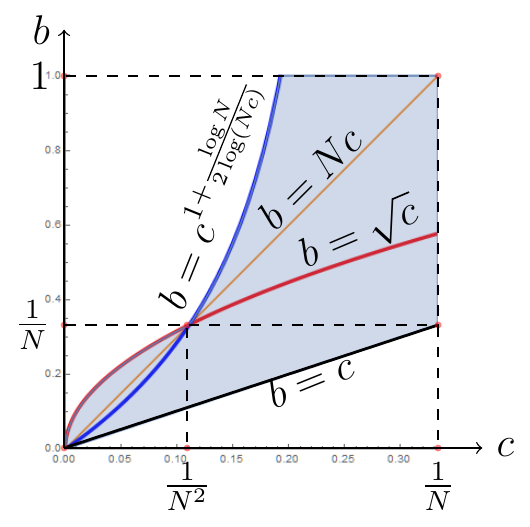}
  \caption{The region where (C5) holds.}\label{j35}
\end{figure}

Conditions (C1) and (C4) are devoted to be able to handle the projection entropy (defined in Section~\ref{k75}). Condition (C3) allows us to calculate the dimension of the backward Furstenberg measure and condition (C5) ensures that its dimension is larger than some possible exceptional set of orthogonal projections, for which the dimension drops.

\begin{theorem}\label{thm:c<b}
	Let $\mathcal{S}$ be a self-affine  IFS of the form \eqref{k16} satisfying  Assumption C. Then
	\begin{equation}\label{eq:thmy}
	\dim_{\rm H} \left(\Lambda\right)=\dim_{\rm H} \left(\nu\right)=
	\min\left\{\frac{\log N}{-\log b},1+\frac{\log(Nb)}{-\log c}\right\}.
	\end{equation}
\end{theorem}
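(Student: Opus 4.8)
The plan is to establish the two inequalities separately; the upper bound is routine and the lower bound carries all the difficulty.

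For the upper bound I would first identify the right hand side of \eqref{eq:thmy} with the affinity dimension of $\mathcal{S}$. From \eqref{k56}--\eqref{k57}, in the direction-$y$ dominated case one gets $\dim_{\rm aff}(\mathcal{S})=\frac{\log N}{-\log b}$ when $Nb\le 1$ and $\dim_{\rm aff}(\mathcal{S})=1+\frac{\log(Nb)}{-\log c}$ when $Nb\ge 1$; since $b>c$ forces $-\log b<-\log c$, a one-line comparison shows that these two values are exactly $\min\{\frac{\log N}{-\log b},\,1+\frac{\log(Nb)}{-\log c}\}$. Condition (C1) gives $Nc<1$, hence $Nbc<Nc<1$, so the second branch never exceeds $2$ and the expression is a legitimate dimension. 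As $\dim_{\rm aff}(\mathcal{S})$ is always an upper bound for $\dim_{\rm H}\Lambda\ge\dim_{\rm H}\nu$, the upper bound follows.

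For the lower bound I would use a Ledrappier--Young type decomposition of $\nu$, as developed in \cite{barany2014ledrappier}, which also yields exact dimensionality. Because the $T_i$ are lower triangular with $b>c$, the weakly contracting (strong--unstable) direction $(0,1)$ is preserved by every $T_i$, while the strongly contracting (strong--stable) direction is sheared and its distribution is recorded by the backward Furstenberg IFS $\mathcal{B}$ of \eqref{eq:backfurst}. The strategy is to slice $\nu$ along the strong--stable leaves and to feed two dimension inputs into the formula $\dim_{\rm H}\nu=\frac{\log N}{-\log c}+\bigl(1-\frac{-\log b}{-\log c}\bigr)\cdot\dim(\pi\nu)$, where $\pi$ is the orthogonal projection transverse to those leaves. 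The first term $\frac{\log N}{-\log c}=s_x$ is the dimension of the induced horizontal system $\mathcal{H}$, whose ratio is $c$ and whose entropy $\log N$ is $<-\log c$ by (C1); here (C4) together with Hochman's Theorem~\ref{k79} and Corollary~\ref{k84} pins it down as $\min\{1,s_x\}=s_x$. The second ingredient is that the transverse projection satisfies $\dim(\pi\nu)=\min\{1,s_y\}$, and substituting $\min\{1,s_y\}$ reproduces exactly the two branches computed in the upper bound (yielding $s_y$ when $Nb\le 1$ and $1+\frac{\log(Nb)}{-\log c}$ when $Nb\ge 1$).

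The hard part is the second ingredient: the relevant projection is taken in the backward Furstenberg (strong--stable) direction, not in a generic one, so I must rule out that this direction lies in the exceptional set of orthogonal projections along which the projected dimension drops below $\min\{1,s_y\}$. This is precisely what (C3) and (C5) are for. Under (C3) the backward Furstenberg measure $(\Pi_{BF})_*\mu$ satisfies Hochman's hypotheses, so by Theorem~\ref{k79} its dimension equals $\min\{1,\frac{\log N}{\log(b/c)}\}$, the similarity dimension of $\mathcal{B}$; and (C5) guarantees that this dimension exceeds the size $\min\{1,\,s_y,\,2(1-s_x)\}$ of the set of bad directions. The threshold $2(1-s_x)$ is the bound on the exceptional set produced by a second--moment (correlation--dimension) estimate of the kind carried out in the proof of Theorem~\ref{j99}; combining this estimate with the projection theory for self--affine measures shows that $(\Pi_{BF})_*\mu$--almost every direction is non-exceptional, so the backward-Furstenberg-typical transverse projection does attain the full value $\min\{1,s_y\}$. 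Assembling the two inputs in the Ledrappier--Young formula gives $\dim_{\rm H}\nu\ge\min\{\frac{\log N}{-\log b},\,1+\frac{\log(Nb)}{-\log c}\}$, matching the upper bound and completing the proof.
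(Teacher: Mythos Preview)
Your overall architecture (Ledrappier--Young formula plus Hochman for the backward Furstenberg measure plus a projection theorem to rule out the bad directions) is the one used in the paper, but two of the three main ingredients are not justified and the gaps are genuine.

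First, the numerator in \eqref{eq:lyformula} is the projection entropy $h_{\Pi}(\mu)$ of the \emph{two-dimensional} system $\mathcal{S}$, not of the horizontal self-similar system $\mathcal{H}$. Your appeal to (C4) and Corollary~\ref{k84} only yields $h_{\Pi_{\mathcal{H}}}(\mu)=\log N$; it says nothing about $h_{\Pi}(\mu)$. Establishing $h_{\Pi}(\mu)=\log N$ is the content of Lemma~\ref{lem:zerocondentropy}, and its proof is not a one-liner: one lifts $\mathcal{S}$ to a three-dimensional IFS with SSP, compares the conditional measures of the lifted and horizontal systems on suitable partitions, and deduces that the fibre dimension vanishes. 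Without this step the first term of the formula is unavailable.

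Second, your account of the exceptional-set bound is off. The threshold $2(1-s_x)$ does not come from a correlation-dimension estimate ``of the kind carried out in the proof of Theorem~\ref{j99}''; that theorem treats the opposite regime $c>b$ under transversality and $L^q$-density hypotheses that are absent here. In the paper the bound arises from Peres--Schlag: the exceptional set of directions has dimension at most $\min\{\dim\nu,\,2-\dim\nu\}$, and one first needs an a priori lower bound on $\dim\nu$ to make this usable. That lower bound is Lemma~\ref{lem:lowerbound}, obtained by iterating the Ledrappier--Young formula together with \eqref{eq:projection} (a bootstrap giving $\dim\nu\ge\min\{2s_x,\,s_y,\,1+\frac{\log(Nb)}{-\log c}\}$); only then does (C5) imply the Peres--Schlag hypothesis $\dim(\Pi_{BF})_*\mu>\min\{\dim\nu,\,2-\dim\nu\}$. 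Your proposal skips this bootstrap entirely.

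A smaller point: the projection term in \eqref{eq:lyformula} is $\min\{1,\dim\nu\}$, not $\min\{1,s_y\}$. One then solves the resulting self-referential equation for $\dim\nu$, which recovers the two branches in \eqref{eq:thmy}.
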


As previously, let $\mu$ be the uniform Bernoulli measure on the symbolic space and let $\nu$ be its projection by the mapping $\Pi$, defined in Section~\ref{k92}.

For $\theta$ proper subspace of $\R^2$, let us denote the orthogonal projection from $\mathbb{R}^2$ to the subspace $\theta^{\bot}$ (orthogonal subspace to $\theta$) by $\mathrm{Proj}_{\theta}$. By \cite[Theorem~2.2]{barany2015ledrappier},
\begin{equation}\label{eq:lyformula}
\dim\nu=\frac{h_{\Pi}(\mu)}{-\log c}+\left(1-\frac{-\log  b}{-\log c}\right)\dim(\mathrm{Proj}_{\theta})_*\nu\text{ for }(\Pi_{BF})_*\mu\text{-a.e. }\theta,
\end{equation}
where $h_{\Pi}(\mu)$ is the projection entropy, defined in Section~\ref{k75}.

\begin{lemma}\label{lem:zerocondentropy}
	If (C1), (C2) and (C4) in Assumption C hold then $h_{\Pi}(\mu)=h(\mu)=\log N$.
\end{lemma}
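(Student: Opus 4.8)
The plan is to sandwich $h_\Pi(\mu)$ between $\log N$ from above and from below, exploiting that the horizontal projection already carries the full entropy. Since $\mu$ is the uniform Bernoulli measure, its Kolmogorov--Sinai entropy is $h(\mu)=\log N$, and the general Feng--Hu inequality $h_\Pi(\mu)\le h(\mu)$ (see \cite{feng2009dimension}) gives the upper bound $h_\Pi(\mu)\le\log N$ for free. Thus everything reduces to the lower bound $h_\Pi(\mu)\ge\log N$.

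For the lower bound I would compare the two--dimensional projection $\Pi$ with its horizontal projection $\Pi_{\mathcal H}=\proj_x\circ\Pi$. The orthogonal projection $\proj_x$ intertwines the two systems: by \eqref{k16} and \eqref{k66} the first coordinate of $S_i(x,y)$ is $cx+u_i=h_i(x)$, so $\proj_x\circ S_i=h_i\circ\proj_x$ for every $i$, i.e. $\mathcal H$ is a factor of $\mathcal S$ through $\proj_x$. Projection entropy is monotone under such intertwining factor maps, which gives $h_\Pi(\mu)\ge h_{\Pi_{\mathcal H}}(\mu)$. Writing $\mathcal A:=\Pi^{-1}\mathcal B$ and letting $\mathcal A_x\subseteq\mathcal A$ be the sub-$\sigma$-algebra generated by the first coordinate of $\Pi$ (equivalently by $\Pi_{\mathcal H}$), one has the identity $h_\Pi(\mu)-h_{\Pi_{\mathcal H}}(\mu)=I_\mu(\mathcal P;\mathcal A\mid\mathcal A_x)-I_\mu(\mathcal P;\sigma^{-1}\mathcal A\mid\sigma^{-1}\mathcal A_x)$, and the two conditional mutual informations can be compared using that, modulo $\mathcal P\vee\mathcal A_x=\mathcal P\vee\sigma^{-1}\mathcal A_x$, the vertical coordinates $\proj_y\Pi(\mathbf i)$ and $\proj_y\Pi(\sigma\mathbf i)$ determine one another through the invertible affine relation $\proj_y\Pi(\mathbf i)=b\,\proj_y\Pi(\sigma\mathbf i)+d_{i_1}\proj_x\Pi(\sigma\mathbf i)+v_{i_1}$ read off from \eqref{k66} (invertibility being exactly $b\neq0$).

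It then remains to evaluate $h_{\Pi_{\mathcal H}}(\mu)$. The IFS $\mathcal H$ from \eqref{k58} is self-similar on the line with common contraction ratio $c$, so $\chi_{\mathcal H,\mathbf p}=-\log c$, and by (C4) it satisfies Hochman's exponential separation condition. Corollary \ref{k84} therefore yields $h_{\Pi_{\mathcal H}}(\mu)=\min\{h(\mu),\chi_{\mathcal H,\mathbf p}\}=\min\{\log N,-\log c\}$. Finally (C1), namely $c<1/N$, gives $-\log c>\log N$, so the minimum equals $\log N$. Chaining the three steps, $\log N=h_{\Pi_{\mathcal H}}(\mu)\le h_\Pi(\mu)\le h(\mu)=\log N$, whence $h_\Pi(\mu)=\log N=h(\mu)$.

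The role of the hypotheses is then transparent: (C1) is what forces the horizontal factor to retain the \emph{entire} entropy (without it the minimum above would be $-\log c<\log N$), (C4) is what licenses the application of Corollary \ref{k84}, and (C2) places us in the direction-$y$ dominating regime in which this entropy value feeds into \eqref{eq:lyformula}. The only genuinely non-formal point is the monotonicity step $h_\Pi(\mu)\ge h_{\Pi_{\mathcal H}}(\mu)$; I expect this to be the main obstacle, and I would either invoke the projection (data-processing) property of Feng--Hu projection entropy directly, or establish it via the conditional mutual information identity above, the crux being the bijectivity of the vertical coordinate change.
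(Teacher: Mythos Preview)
Your overall strategy is correct and is considerably more direct than the paper's proof. The paper constructs two auxiliary systems (a three-dimensional lift $\widehat{\Phi}$ satisfying SSP and a two-dimensional diagonal system $\widetilde{\Phi}$), applies Ledrappier--Young type dimension formulas from \cite{barany2015ledrappier} and \cite{feng2009dimension} to each, disintegrates along several measurable partitions, and finally deduces $h_\Pi(\mu)=h(\mu)$ from the vanishing of certain fibre dimensions. Your route---sandwich $h_\Pi(\mu)$ between $h(\mu)$ and $h_{\Pi_{\mathcal H}}(\mu)$ and use (C1)+(C4) via Corollary~\ref{k84} to force $h_{\Pi_{\mathcal H}}(\mu)=\log N$---bypasses all of this machinery. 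A further bonus is that your argument never uses (C2); the paper's proof needs the ordering $b>c$ to invoke the correct form of the Ledrappier--Young formula.

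One point deserves sharpening. Your justification of the monotonicity $h_\Pi(\mu)\ge h_{\Pi_{\mathcal H}}(\mu)$ via the mutual-information identity and the ``bijectivity of the vertical coordinate change'' is more delicate than necessary, and as written does not quite close: the relation $\mathcal P\vee\mathcal A=\mathcal P\vee\sigma^{-1}\mathcal A$ does \emph{not} by itself imply $H_\mu(\mathcal P\mid\mathcal A)=H_\mu(\mathcal P\mid\sigma^{-1}\mathcal A)$, since the bijection between $(X,Y)$ and $(X',Y')$ depends on the first coordinate $i_1$. The clean argument here uses that $\mu$ is Bernoulli: the partition $\mathcal P$ (which depends only on $i_1$) is independent of $\sigma^{-1}\mathcal F$ for \emph{any} sub-$\sigma$-algebra $\mathcal F$, so $H_\mu(\mathcal P\mid\sigma^{-1}\Pi^{-1}\mathcal B)=H_\mu(\mathcal P)=\log N$ and likewise for $\Pi_{\mathcal H}$. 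This collapses both projection entropies to
\[
h_\Pi(\mu)=\log N-H_\mu(\mathcal P\mid\Pi^{-1}\mathcal B_{\mathbb R^2}),\qquad
h_{\Pi_{\mathcal H}}(\mu)=\log N-H_\mu(\mathcal P\mid\Pi_{\mathcal H}^{-1}\mathcal B_{\mathbb R}),
\]
and then $\Pi_{\mathcal H}^{-1}\mathcal B_{\mathbb R}\subseteq\Pi^{-1}\mathcal B_{\mathbb R^2}$ together with monotonicity of conditional entropy gives $h_{\Pi_{\mathcal H}}(\mu)\le h_\Pi(\mu)$ immediately. With this correction your proof is complete and is the more economical of the two.
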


\begin{proof}
	Let us define a lifted IFS on $[0,1]^3$ and a derived IFS on $\{0\}\times[0,1]^2$, as follows
	\[
	\widehat{\Phi}:=\left\{\widehat{S}_i(x,y,z)=(cx+u_{i},by+d_ix+v_{i},\rho z+w_i)\right\}_{i=1}^m\text{ and }
	\]
	\[
	\widetilde{\Phi}:=\left\{\widetilde{S}_i(y,z)=(cx+u_{i},\rho z+w_i)\right\}_{i=1}^m,
	\]
	where $0<\rho<\min\left\{|c|,|b|\right\}$ and $w_i\in\R$ are chosen such that
	\begin{equation}\label{eseplift}
	\widehat{S}_i([0,1]^3)\cap\widehat{S}_j([0,1]^3)=\emptyset\text{ and }\widetilde{S}_i([0,1]^2)\cap\widetilde{S}_j([0,1]^2)=\emptyset\text{ for every }i\neq j.
	\end{equation}
	Denote the natural projections of $\widehat{\Phi}$ and $\widetilde{\Phi}$ by $\widehat{\Pi}$ and $\widetilde{\Pi}$ respectively. Let us define $\widehat{\nu}=\widehat{\Pi}_*\mu$ and $\widetilde{\nu}=\widetilde{\Pi}_*\mu$ the push-down measures.
	
	We note that the Lyapunov exponents coincide for every measure $\widehat{\nu}, \widetilde{\nu}$, and $\nu$ for the appropriate directions. Applying \cite[Corollary~2.9]{barany2015ledrappier} and \cite[Theorem~2.11]{feng2009dimension}, we have
	\begin{eqnarray}
	\label{eq:ly2}\dim_H\widehat{\nu}&=&\frac{h(\mu)}{-\log\rho}+\left(\frac{-\log\rho+\log c}{-\log\rho}\right)\dim\nu+\\
	& &\left(\frac{-\log c+\log b}{-\log\rho}\right)\dim(\mathrm{Proj}_{\theta})_*\nu\text{ for }(\Pi_{BF})_*\mu\text{-a.e. }\theta,\\
	\dim_H\widetilde{\nu}&=&\frac{h_{\Pi_{\mathcal{H}}}(\mu)}{-\log c}+\frac{h(\mu)-h_{\Pi_{\mathcal{H}}}(\mu)}{-\log\rho}.
	\end{eqnarray}
	By \eqref{eq:lyformula} and \eqref{eq:ly2} we have
	\begin{equation}\label{eq1}
	\dim_H\widehat{\nu} = \frac{h(\mu)-h_{\Pi}(\mu)}{-\log\rho}+\dim\nu.
	\end{equation}
	
	Let us introduce measurable partitions of $[0,1]^3$ by $\xi(a,y):=\{a\}\times\{y\}\times[0,1]$ and $\tau(a):=\left\{a\right\}\times[0,1]\times[0,1]$. Moreover, define a measurable partition of $[0,1]\times\{0\}\times[0,1]$ by $\zeta(a)=\{a\}\times \{0\}\times[0,1]$ and a measurable partition of $[0,1]^2\times\{0\}$ by $\eta(a)=\{a\}\times[0,1]\times\{0\}$.

	By Rokhlin's Theorem there are families of conditional measures $\widehat{\nu}^{\xi}_{a,y}$, $\widehat{\nu}^{\tau}_a$, $\widetilde{\nu}^{\zeta}_a$ and $\nu^{\eta}_a$ on the partitions respectively, uniquely defined up to zero measure sets.
	
	By definition of conditional measures and the partition $\tau$, $\widehat{\nu}=\int\widehat{\nu}^{\tau}_ad\nu_x(a)$, where $\nu_x=(\Pi_{\mathcal{H}})_*\mu$, see \eqref{k10}. On the other hand,
	$\widehat{\nu}=\int\widehat{\nu}^{\xi}_{a,y}d\nu(a,y)=\iint\widehat{\nu}^{\xi}_{a,y}d\nu^{\eta}_a(y)d\nu_x(a)$. Thus, $$\widehat{\nu}^{\tau}_a=\int\widehat{\nu}^{\xi}_{a,y}d\nu^{\eta}_a(y)\text{ for $\nu_x$-a.e. $a$.}$$
	
	Let $\proj:[0,1]^3\mapsto\{0\}\times[0,1]^2$ be the orthogonal projection to the $y,z$-coordinate plane. Since $(\proj)_*\widehat{\nu}^{\tau}_a=\widetilde{\nu}_a^{\zeta}$ for $\nu_x$-a.e. $a$, we get that
	\begin{equation}\label{econdmeasures}
	\widetilde{\nu}_a^{\zeta}=\int(\proj)_*\widehat{\nu}^{\xi}_{a,y}d\nu^{\eta}_a(y)\text{ for $\nu_x$-a.e. $a$.}
	\end{equation}

	
	Applying \cite[Theorem~7.1]{barany2015ledrappier} we have
	\begin{eqnarray*}
		&&\dim\widehat{\nu}^{\xi}_{a,y}=\frac{h(\mu)-h_{\Pi}(\mu)}{-\log\rho}\text{ for $\nu$-a.e. $(a,y)$}\\
		&&\dim\widetilde{\nu}^{\zeta}_a=\frac{h(\mu)-h_{\Pi_{\mathcal{H}}}(\mu)}{-\log\rho}\text{ for $\nu$-a.e. $a$.}
	\end{eqnarray*}
	
	By assumptions (C1) and (C4), we may apply \cite[Theorem~2.8]{feng2009dimension} and \cite[Theorem~1.1]{hochman2012self}, and therefore
	\[
	\dim\nu_x=\frac{h_{\Pi_{\mathcal{H}}}(\mu)}{-\log c}=\frac{h(\mu)}{-\log c}.
	\]
	Therefore $\dim\widetilde{\nu}^{\zeta}_a=0$ for $\nu_x$-a.e. $a$.
	
	By \eqref{econdmeasures}, if $\widetilde{\nu}_a^{\zeta}(R)=0$ for a Borel set $R\subseteq\{a\}\times\{0\}\times[0,1]$ then $(\proj)_*\widehat{\nu}^{\xi}_{a,y}(R)=0$ for $\nu^{\eta}_a$-a.e $y$. Thus, by the definition of the Hausdorff dimension $\dim_H\widetilde{\nu}_a^{\zeta}\geq\dim_H(\proj)_*\widehat{\nu}^{\xi}_{a,y}=\dim_H\widehat{\nu}^{\xi}_{a,y}$ for $\nu$-a.e $(a,y)$. Hence $\dim_H\widehat{\nu}^{\xi}_{a,y}=0$ for $\nu$-a.e. $(a,y)$, which implies that $h(\mu)=h_{\Pi}(\mu)$.
\end{proof}

\begin{lemma}\label{lem:lowerbound}
	Let $\mathcal{S}$ be a self-affine  IFS of the form \eqref{k16} satisfying the assumptions of  Assumption C. Then
	
\begin{equation}\label{j32}
  \dim\nu\geq\min\left\{2\dfrac{h(\mu)}{-\log c},\dfrac{h(\mu)}{-\log b},1+\dfrac{h(\mu)+\log b}{-\log c}\right\}.
\end{equation}

\end{lemma}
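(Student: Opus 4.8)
The plan is to feed the Ledrappier--Young type formula \eqref{eq:lyformula} into a projection theorem and then read off the three terms of \eqref{j32} from a short case analysis. First I would combine \eqref{eq:lyformula} with Lemma \ref{lem:zerocondentropy} (which gives $h_{\Pi}(\mu)=h(\mu)=\log N$) to obtain, for $(\Pi_{BF})_*\mu$-a.e. $\theta$,
\[
\dim\nu=\frac{\log N}{-\log c}+\frac{\log(b/c)}{-\log c}\cdot\dim(\mathrm{Proj}_{\theta})_*\nu .
\]
Since the left-hand side and both coefficients are constants and the coefficient of the projection term is strictly positive (as $c<b$), this identity forces $\dim(\mathrm{Proj}_{\theta})_*\nu$ to take one and the same value, say $t_0$, for $(\Pi_{BF})_*\mu$-a.e. $\theta$. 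Thus the whole problem reduces to producing a good lower bound for the single number $t_0$.

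Next I would compute the dimension of the direction distribution. The backward Furstenberg measure $(\Pi_{BF})_*\mu$ is the self-similar measure of the IFS $\mathcal{B}$ in \eqref{eq:backfurst}, which has $N$ maps all of ratio $c/b$; under the uniform Bernoulli weights and assumption (C3), Theorem \ref{k79} yields
\[
\dim(\Pi_{BF})_*\mu=\min\left\{1,\frac{\log N}{\log(b/c)}\right\}=:d_F .
\]
The identification of a direction with the corresponding point of the line $\xi$ is bi-Lipschitz on the compact support of $\mathcal{B}$, so dimensions transfer between the two pictures.

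The core step is the projection lower bound $t_0\ge\min\{1,\dim\nu,d_F\}$. For this I would invoke the standard exceptional-set estimate for orthogonal projections of a measure: for every $u\le\min\{1,\dim\nu\}$ the set of directions $\theta$ with $\dim(\mathrm{Proj}_{\theta})_*\nu<u$ has Hausdorff dimension at most $u$ (via the finiteness of $u$-energies of almost every projection). Whenever $u<\min\{1,\dim\nu,d_F\}$ this exceptional set has dimension strictly below $d_F=\dim(\Pi_{BF})_*\mu$, so it is null for the Furstenberg measure; hence $t_0\ge u$ for all such $u$, and letting $u$ increase to the threshold gives the bound. I expect this to be the main obstacle, since one must make sure the relevant projection/energy estimate is available in exactly the form needed and that it may legitimately be played against the (possibly singular) Furstenberg measure rather than against Lebesgue measure on the set of directions.

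Finally I would substitute $t_0\ge\min\{1,\dim\nu,d_F\}$ back into the displayed identity, using that it is increasing in $t_0$, and split into three cases according to which of $1$, $\dim\nu$, $d_F$ realises the minimum. Writing $s=\dim\nu$, the case $\min=1$ gives $s\ge 1+\frac{\log(Nb)}{-\log c}$; the case $\min=s$ gives, after solving $s\ge\frac{\log N}{-\log c}+\frac{\log(b/c)}{-\log c}\,s$, the bound $s\ge\frac{\log N}{-\log b}$; and the case $\min=d_F=\frac{\log N}{\log(b/c)}$ gives $s\ge 2\frac{\log N}{-\log c}$. Taking the minimum over the three cases yields exactly \eqref{j32}. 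I note that condition (C5) is not needed for this lower bound itself; it enters only afterwards, to discard the term $2\frac{\log N}{-\log c}$ when matching the present estimate with the upper bound in the proof of Theorem \ref{thm:c<b}.
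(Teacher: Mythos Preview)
Your argument is correct and matches the paper's proof in substance. The paper packages the same ingredients slightly differently: it cites \cite[Lemma~4.3]{barany2014ledrappier} for the projection bound $\dim(\mathrm{Proj}_\theta)_*\nu \geq \min\{\dim(\Pi_{BF})_*\mu,\dim\nu\}$ (which is precisely the Kaufman-type exceptional-set estimate you invoke) and then reaches the minimum in \eqref{j32} via the iteration $x_0=\tfrac{h(\mu)}{-\log c}$, $x_n=r(x_{n-1})$ with $r(x)=\tfrac{h(\mu)}{-\log c}+\bigl(1-\tfrac{\log b}{\log c}\bigr)\min\{1,\tfrac{h(\mu)}{\log(b/c)},x\}$, rather than by your direct case analysis of the inequality $s\geq r(s)$.
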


\begin{figure}\label{j31}
  \centering
  \includegraphics[width=7cm]{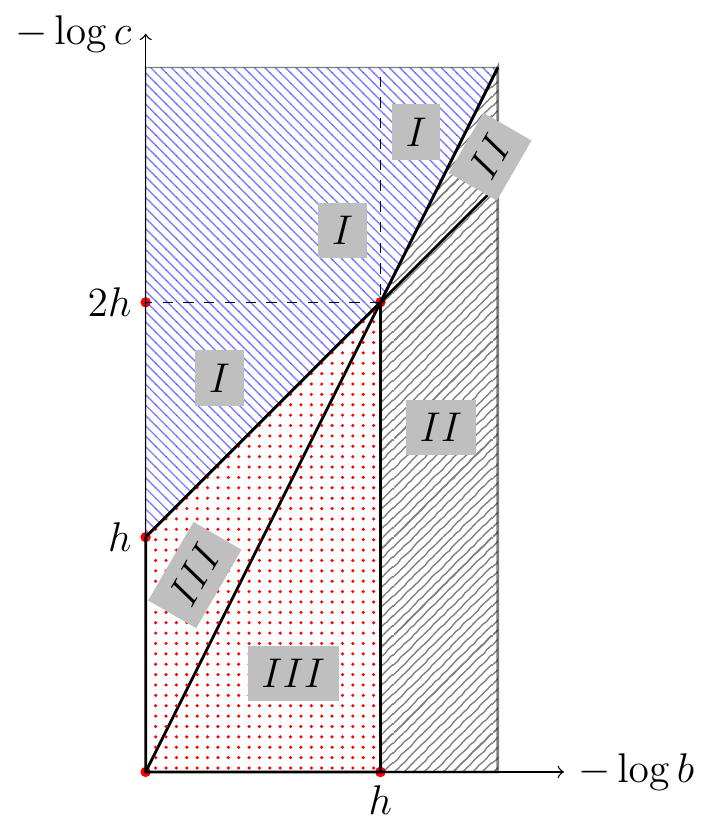}
  \caption{Region I,II and III corresponds to the  area where the minimum in \eqref{j32} is attained at the  first, second or third expression respectively.
  }\label{j30}
\end{figure}

\begin{proof}
	By \cite[Theorem~1.1]{hochman2012self},
	$$
	\dim(\Pi_{BF})_*\mu=\min\left\{1,\frac{h(\mu)}{\log b-\log c}\right\}.
	$$
	By \cite[Lemma~4.3]{barany2014ledrappier},
	\begin{equation}\label{eq:projection}
	\dim(\mathrm{Proj}_{\theta})_*\nu\geq\min\left\{\dim(\Pi_{BF})_*\mu,\dim\nu\right\}\text{ for }(\Pi_{BF})_*\mu\text{-a.e. }\theta.
	\end{equation}
	Let us define a sequence $\left\{x_n\right\}_{n=0}^{\infty}$ inductively as follows. Let $x_0=\dfrac{h(\mu)}{-\log c}$ and $x_n=r(x_{n-1})$ for $n\geq1$, where
	$$
	r(x)=\dfrac{h(\mu)}{-\log c}+\left(1-\dfrac{\log b}{\log c}\right)\min\left\{1,\dfrac{h(\mu)}{\log(b/c)},x\right\}.
	$$
	It is easy to see that
	$$
	\lim_{n\to\infty}x_n=\min\left\{2\dfrac{h(\mu)}{-\log c},\dfrac{h(\mu)}{-\log b},1+\dfrac{h(\mu)+\log b}{-\log c}\right\},
	$$
	which is the fixed point of $x\mapsto r(x)$. By applying \eqref{eq:lyformula} and Lemma~\ref{lem:zerocondentropy}, one can show by induction that $\dim\mu\geq x_n$ for every $n\geq0$, as required.
\end{proof}

\begin{proof}[Proof of Theorem~\ref{thm:c<b}]
	By \cite[Proposition~6.1]{PeresSchlag} and \eqref{eq:projection}, if
	\begin{equation}\label{eq:pseq}
	\dim(\Pi_{BF})_*\mu>\min\left\{\dim\nu,2-\dim\nu\right\}
	\end{equation}
	then
	\begin{equation}\label{eq:dimcons}
	\dim(\mathrm{Proj}_{\theta})_*\nu=\min\{1,\dim\nu\}\text{ for }(\Pi_{BF})_*\mu\text{-a.e. }\theta.
	\end{equation}
	But by \cite[Theorem~1.1]{hochman2012self}, $$
	\dim(\Pi_{BF})_*\mu=\min\left\{1,\frac{h(\mu)}{\log b-\log c}\right\}.
	$$ and by Lemma~\ref{lem:lowerbound},
	$$
	\min\left\{2\dfrac{h(\mu)}{-\log c},\dfrac{h(\mu)}{-\log b},1+\dfrac{h(\mu)+\log b}{-\log c}\right\}\leq\dim\nu.
	$$
	which together with assumtion (C5) implies \eqref{eq:pseq}. Thus, \eqref{eq:lyformula}, Lemma~\ref{lem:zerocondentropy} and \eqref{eq:dimcons} verify the statement.
\end{proof}

\section{Examples}

\subsection{Examples for the direction-$x$ dominates case}

\begin{figure}[H]\label{cd32}
    \centering
    \begin{subfigure}[b]{0.5\textwidth}
        \centering
        \includegraphics[width=\textwidth]{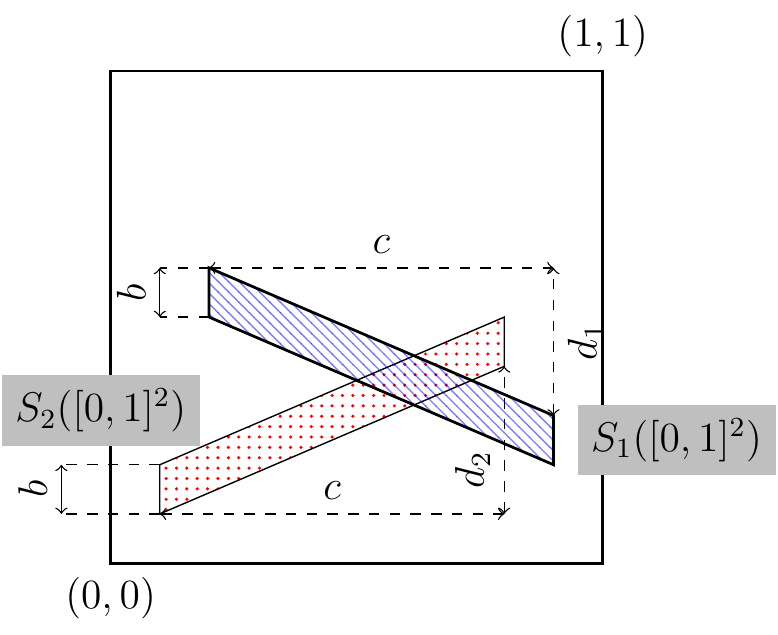}
        \caption{For Example \ref{j49}}\label{j47}
    \end{subfigure}%
    ~
    \begin{subfigure}[b]{0.5\textwidth}
        \centering
        \includegraphics[width=\textwidth]{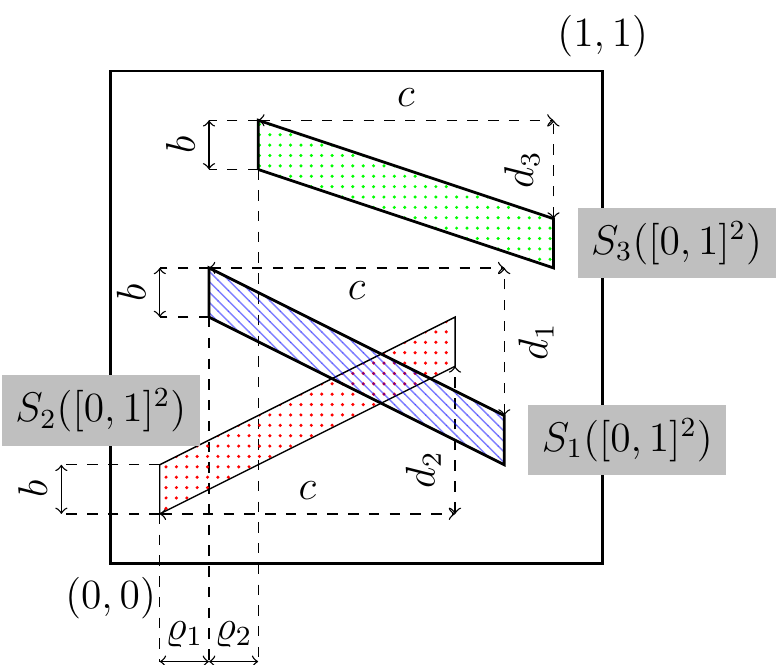}
        \caption{Example \ref{j29} and  \ref{j48}, in the Example \ref{j29}, $\rho_1=\rho_2$}\label{kep}
    \end{subfigure}
    \caption{ Negative values of $d_i$ correspond to "decreasing" parallelograms. In }
\end{figure}

We present three examples. In the first and second case we can apply Theorem \ref{k17} and in the third example we can apply Theorem~\ref{j99}. In all examples $c$ can be chosen as an arbitrary element of a parameter interval except a small exceptional set which is going to be $E_1\subset \left(0.5,1\right)$, $E_2\subset(1/3,1)$ and $E_3\subset \left(\frac{1}{\sqrt{3}},1\right)$ in the examples respectively with the properties:
\begin{equation}\label{j43}
  \dim_{\rm H} (E_1)=\dim_{\rm H} (E_2)=0\mbox{ and } \mathcal{L}\mathrm{eb}\left(E_3\right)=0.
\end{equation}
The precise definition of these exceptional sets are given in Section \ref{j39}.

\begin{example}\label{j49}
Let $\mathcal{S}$ be an IFS of the form \eqref{k16}, where
\begin{itemize}
  \item $N=2$.
  \item We choose an arbitrary $c\in\left(\frac{1}{2},1\right)\setminus E_1$ (see \eqref{j38}) and $0<b<c/2$.
  \item We assume that
$u_1\ne u_2$,
$d_1\ne d_2$ and $v_1,v_2$ are selected in such a way  that the IFS $\mathcal{S}:=\left\{S_1,S_2\right\}$
satisfies $S_i\left([0,1]^2\right)\subset [0,1]^2$.  Then by Theorem \ref{k17} we have
\begin{equation}\label{j41}
  \dim_{\rm H} (\Lambda)=
1+\frac{\log(2c)}{-\log b}.
\end{equation}
\end{itemize}
(See  Figure \ref{j47}.)

\end{example}
We can apply Theorem \ref{k17} in Example \ref{j49} because Assumption A holds. Namely, it is obvious that assumptions (A1)-(A2) hold. Assumption (A3)
follows from choice of $E_1$ (see \eqref{j38}).
Assumption (A4), (the transversality condition)  follows from the fact that the
associated $3$-dimensional IFS $\widetilde{\mathcal{S}}$, defined in \eqref{j36}, satisfies SSP.
 Namely, the third coordinate of $\widetilde{\mathcal{S}}$ is an IFS  $\mathcal{F}$ on the line, defined in  \eqref{k69}. This consists of two maps with distinct fixed points and the sum of their contraction ratios is less than $1$. This means that the SSP holds for $\mathcal{F}$. Consequently, the SSP holds for $\widetilde{\mathcal{S}}$. Hence the transversality condition (A4) also holds.
 \begin{example}\label{j29}
\begin{description}
  \item[(a)]  $N=3$.
  \item[(b)] We fix an arbitrary $c\in\left(\frac{1}{3},1\right)\setminus E_2$. (See \eqref{j37}.)
  \item[(c)] Let $b\in\left(0,\frac{c}{2}\right)\cap\left(0,\frac{1}{3}\right)$.
  \item[(d)] $u_1,u_2,u_3$  are pairwise different.
  \item[(e)] We assume that $d_1<d_2$ and we choose $d_3$ from the interval
$$
d_3\in \left(
d_2\left(2-\frac{c}{b}\right)+d_1\left(\frac{c}{b}-1\right)
,
d_1\left(2-\frac{c}{b}\right)+d_2\left(\frac{c}{b}-1\right)
\right).
$$
  Note that this holds for example if $d_1<d_3<d_2$.
  \item[(f)] We choose the vertical translation parameters (like on the right hand side of Figure \ref{kep}) so that
  \begin{equation}\label{j42}
    S_3\left([0,1]^2\right)\cap\left(S_1\left([0,1]^2\right)\cup S_2\left([0,1]^2\right)\right)=\emptyset,
  \end{equation}
  and
  $S_i\left([0,1]^2\right)\subset [0,1]^2$ for $i=1,2,3$.
\end{description}
  Then we have
\begin{equation}\label{j40}
\dim_{\rm H} (\Lambda)=1+\frac{\log(3c)}{-\log b}.
\end{equation}
(See  Figure \ref{kep}.)
\end{example}

\begin{example}\label{j48}
\begin{description}
  \item[(a)] $N=3$.
  \item[(b)] We fix an arbitrary $c\in\left(\frac{1}{\sqrt{3}},1\right)\setminus E_3$. (See \eqref{j37}.)
  \item[(c)] Let $b\in\left(0,\frac{c}{2}\right)$.
  \item[(d)] $u_1,u_2,u_3$  are consecutive elements of an arithmetic progression  (not necessarily in this order).
  \item[(e)] We assume that $d_1<d_2$ and we choose $d_3$ from the interval
$$
d_3\in \left(
d_2\left(2-\frac{c}{b}\right)+d_1\left(\frac{c}{b}-1\right)
,
d_1\left(2-\frac{c}{b}\right)+d_2\left(\frac{c}{b}-1\right)
\right).
$$
  Note that this holds for example if $d_1<d_3<d_2$.
  \item[(f)] We choose the vertical translation parameters (like on the right hand side of Figure \ref{kep}) so that
  \begin{equation}\label{j42}
    S_3\left([0,1]^2\right)\cap\left(S_1\left([0,1]^2\right)\cup S_2\left([0,1]^2\right)\right)=\emptyset,
  \end{equation}
  and
  $S_i\left([0,1]^2\right)\subset [0,1]^2$ for $i=1,2,3$.
\end{description}
  Then we have
\begin{equation}\label{j40}
\dim_{\rm H} (\Lambda)=\min\left\{
2,1+\frac{\log(3c)}{-\log b}
\right\}.
\end{equation}
(See  Figure \ref{kep}.)
\end{example}
Combining (c), (e) and (f) (note that these assumptions appear both in Examples \ref{j29} and \ref{j48}) we get that the transversality condition holds.
Namely, with the open set
$$
V:=
(0,1)^2\times
\left(\frac{\max\left(d_2,d_3\right)}{b-c},\frac{\min(d_1,d_3)}{b-c}\right)
$$
the IFS $\widetilde{\mathcal{S}}$ satisfies the SSP. Hence by Lemma \ref{k73} the transversality condition holds.
Hence, we can apply Theorem \ref{k17} for Example \ref{j29} in the same way we applied it for the Example \ref{j49}.

For the example \ref{j48} we are going to apply Theorem \ref{j99} here.
The combination of (b)
and Corollary \ref{j45} implies that $\nu_x$ has bounded density.
Hence,  Theorem \ref{j99} applies.

\subsubsection{Phase transition in Example \ref{j48}}
Note that in Example \ref{j48} the parameter interval $I=\left(\frac{1}{\sqrt{3}},1\right)$ can be partitioned naturally into
$$
I_1:=\left(\frac{1}{\sqrt{3}},\sqrt{\frac{2}{3}}\right]
\mbox{ and }
I_2:=\left(\sqrt{\frac{2}{3}},1\right].
$$
Namely, the affinity dimension
$$
A(b,c):=1+\frac{\log(3c)}{-\log b}
$$ is  monotone increasing in both $b$ and $c$. So, for a fixed $c$ it takes its biggest value for   $b=c/2$.
Clearly $c=\sqrt{2/3}$ is the solution of
$
A\left(\frac{c}{2},c\right)=2.
$
That is for $c\in I_1\setminus E_3$ we have $\dim_{\rm H} (\Lambda)=A(b,c)$ for every $0<b \leq c/2$. However, for $c\in I_2\setminus E_3$ it depends on the choice of $b$
if $\dim_{\rm H} (\Lambda)=A(b,c)$ or $\dim_{\rm H} (\Lambda)=2$. Now we fix  a
$c\in I_2\setminus E_3$, $u_1,u_2,u_3$ and $d_1,d_2,d_3$ satisfying the conditions of Example \ref{j48}. We vary only $b$. So, the attractor will be denoted by $\Lambda_b$.
To study the function
\begin{equation}\label{c36}
b\mapsto \dim_{\rm H} (\Lambda_b), \qquad b\in \left[0,\frac{c}{2}\right],
\end{equation}
first observe that
 for a $c\in I_2$ the function $ A\left(\cdot,c\right)$ is monotone increasing and
\begin{equation}\label{c35}
A\left(\frac{1}{3c},c\right)=2.
\end{equation}
That by Theorems \ref{j99}, for a $c\in I_2\setminus E_3$ we have:
\begin{equation}\label{c34}
  \dim_{\rm H} \Lambda_b=
  \left\{
    \begin{array}{ll}
      1+\frac{\log(3c)}{-\log b}, & \hbox{if $0<b \leq \frac{1}{3c}$;} \\
      2, & \hbox{if $\frac{1}{3c} \leq b \leq \frac{c}{2}$.}
    \end{array}
  \right.
\end{equation}

\begin{figure}[H]\label{c32}
    \centering
    \begin{subfigure}[b]{0.5\textwidth}
        \centering
        \includegraphics[width=\textwidth]{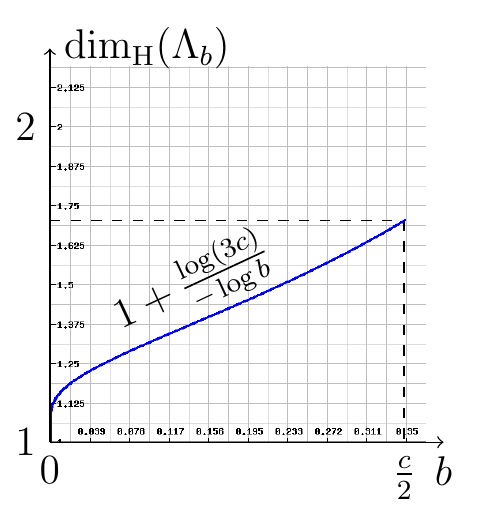}
        \caption{ $c=0.7 < \sqrt{2/3}$.}
    \end{subfigure}%
    ~
    \begin{subfigure}[b]{0.5\textwidth}
        \centering
        \includegraphics[width=\textwidth]{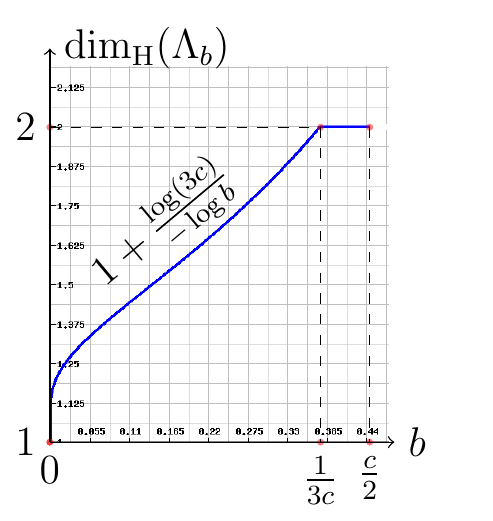}
        \caption{$c=8/9 > \sqrt{2/3}$.}
    \end{subfigure}
    \caption{In Example \ref{j48} there is no phase transition if $c<\sqrt{2/3}$ and there is a phase transition at $b=c/3$ for $\sqrt{2/3}<c<1$.}
\end{figure}
%

Which means that there is a phase transition at $b=c/3$ , where the graph of
$b\mapsto \dim_{\rm H} \Lambda_b$ is non-differentiable, although it is continuous on the interval
$b\in \left(0,\frac{c}{2}\right]$.

 \subsubsection{The definition of exceptional parameter sets $E_1,E_2, E_3$.}\label{j39}
Let $E_1$ be the set of exceptional parameters
\begin{equation}\label{j38}
  E_1:=\left\{
  c\in\left(\frac{1}{2},1\right):\mbox{ (A3)  does not hold for the IFS }
  \left\{cx,cx+1\right\}
  \right\}.
\end{equation}

Let

\begin{equation}\label{j38}
  E_2:=\left\{
  c\in\left(\frac{1}{3},1\right):\mbox{ (A3)  does not hold for the IFS }
  \left\{cx+u_1,cx+u_2,cx+u_3\right\}
  \right\}.
\end{equation}

It follows from \cite[Theorem A]{shmerkin2014absolute} that $E_1$ and $E_2$ are  sets of Hausdorff dimension zero.
 About $E_1$ note that as long as $u_1\ne u_2$ the exceptional set is the same for all IFS
 $\left\{cx+u_1,cx+u_2\right\}$.

To define the third exceptional set first we need to state the following theorem \cite[Theorem 2]{Simon_Toth}

\begin{theorem}[Simon, Tóth]\label{j44}
For a natural number $N>1$
let $\mathfrak{m}_{\lambda}^{N}$ be the self-similar measure corresponding to the  IFS $\left\{\lambda x+i\right\}_{i=0}^{N-1}$ and the uniform distribution of weights $\left\{\frac{1}{N}, \dots ,\frac{1}{N}\right\}$. Then there exists an exceptional set $R_N\subset \left(\frac{1}{N},1\right)$ having Lebesgue measure zero such that $\mathfrak{m}_{\lambda}^{N}$ is absolute continuous with $L^2$ density whenever $\lambda\in\left(\frac{1}{N},1\right)\setminus R_N$.
\end{theorem}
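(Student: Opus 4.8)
The plan is to follow the transversality/energy method of Solomyak \cite{solomyak1995random}, adapted to the $N$-ary uniform digit set. Write $\mathfrak{m}_\lambda^N$ as the law of $X=\sum_{k\ge 0}a_k\lambda^k$, where the digits $a_k$ are i.i.d.\ uniform on $\{0,1,\dots,N-1\}$. The first step is the standard $L^2$ criterion via mollification: if $\mathfrak{m}_r:=\mathfrak{m}_\lambda^N*\tfrac{1}{2r}\ind_{[-r,r]}$, a direct computation gives
\[
\|\mathfrak{m}_r\|_2^2\le \frac{1}{2r}(\mathfrak{m}_\lambda^N\times\mathfrak{m}_\lambda^N)\big(\{(x,y):|x-y|<2r\}\big).
\]
Hence $\liminf_{r\to0}\tfrac1r(\mathfrak{m}_\lambda^N\times\mathfrak{m}_\lambda^N)(\{|x-y|<r\})<\infty$ forces, by weak-$*$ compactness in $L^2$ (the mollifications converge weakly to $\mathfrak{m}_\lambda^N$, and a bounded subsequence has a weak $L^2$ limit which must coincide), that $\mathfrak{m}_\lambda^N$ is absolutely continuous with $L^2$ density. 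So it suffices to bound this correlation integral for a.e.\ $\lambda$.

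The second step moves the parameter-integral inside by Fatou: for a compact interval $I=[\lambda_0,\lambda_1]\subset(1/N,1)$,
\[
\int_I\liminf_{r\to0}\frac1r(\mathfrak{m}_\lambda^N\times\mathfrak{m}_\lambda^N)(\{|x-y|<r\})\,d\lambda
\le\liminf_{r\to0}\frac1r\int_I(\mathfrak{m}_\lambda^N\times\mathfrak{m}_\lambda^N)(\{|x-y|<r\})\,d\lambda,
\]
so it is enough to prove $\int_I(\mathfrak{m}_\lambda^N\times\mathfrak{m}_\lambda^N)(\{|x-y|<r\})\,d\lambda\le Cr$. Taking two independent copies $X,Y$, the correlation equals $\mathbb{P}(|X-Y|<r)$, and $X-Y=g_\omega(\lambda):=\sum_{k\ge0}\omega_k\lambda^k$ with $\omega_k=a_k-b_k\in\{-(N-1),\dots,N-1\}$ i.i.d. Conditioning on the difference sequence $\omega$ gives
\[
\int_I\mathbb{P}(|X-Y|<r)\,d\lambda=\sum_\omega p(\omega)\,\big|\{\lambda\in I:|g_\omega(\lambda)|<r\}\big|.
\]
Writing $m=m(\omega)$ for the index of the first nonzero $\omega_k$ and factoring $g_\omega(\lambda)=\lambda^m\tilde g_\omega(\lambda)$ with $\tilde g_\omega(0)\ne0$ reduces matters to a \emph{transversality estimate} $|\{\lambda\in I:|\tilde g_\omega(\lambda)|<\rho\}|\le C\rho$, uniform over the admissible power series $\tilde g_\omega$ (normalizing the leading coefficient and using the symmetry $\omega\leftrightarrow-\omega$).

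Granting such an estimate, and using $\lambda^m\ge\lambda_0^m$ on $I$ together with $\sum_{m(\omega)=m}p(\omega)=(1-\tfrac1N)N^{-m}$ (since $\mathbb{P}(\omega_k=0)=1/N$), we obtain
\[
\int_I\mathbb{P}(|X-Y|<r)\,d\lambda\le C r\sum_{m\ge0}(\lambda_0 N)^{-m},
\]
which converges \emph{precisely because} $\lambda_0>1/N$; this is exactly where the hypothesis $\lambda\in(1/N,1)$ is used. The transversality estimate itself would be proved by exhibiting a Solomyak-type $*$-function, i.e.\ a threshold $\lambda_*(N)$ and $\delta>0$ so that on $[0,\lambda_*]$ every admissible series obeys $|\tilde g(\lambda)|<\delta\Rightarrow \tilde g'(\lambda)<-\delta$. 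The main obstacle is that the naive domination $|\omega_k|\le N-1$ yields a $*$-function whose smallest root is exactly $1/N$; one must construct a sharper $*$-function that pushes $\lambda_*$ strictly above $1/N$, in fact up to $\lambda_*\ge N^{-1/2}$. Together with the convergent series this gives $L^2$ density for a.e.\ $\lambda\in(1/N,\lambda_*)$.

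Finally I would bootstrap from $(1/N,\lambda_*)$ to all of $(1/N,1)$. Splitting the digits of $X$ by residue modulo $t$ exhibits $\mathfrak{m}_\lambda^N$ as a convolution of $t$ rescaled independent copies of $\mathfrak{m}_{\lambda^t}^N$; since convolution of one $L^2$ factor with finitely many probability measures stays in $L^2$ (Young's inequality $\|f*\mu\|_2\le\|f\|_2$), we get: if $\mathfrak{m}_{\lambda^t}^N\in L^2$ then $\mathfrak{m}_\lambda^N\in L^2$. As $\lambda\mapsto\lambda^t$ is a $C^1$ diffeomorphism with nonvanishing derivative on compacts, it pulls Lebesgue-null sets back to null sets, so the base case transfers. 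A short computation shows that when $\lambda_*\ge N^{-1/2}$ the intervals $\big((1/N)^{1/t},\lambda_*^{1/t}\big)$ chain-cover $(1/N,1)$, so every $\lambda\in(1/N,1)$ admits an integer $t$ with $\lambda^t\in(1/N,\lambda_*)$. The exceptional set $R_N$ is then the countable, hence Lebesgue-null, union of the base exceptional set with its preimages under the maps $\lambda\mapsto\lambda^t$, which proves $L^2$ density for a.e.\ $\lambda\in(1/N,1)$.
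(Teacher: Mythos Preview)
The paper does not prove this theorem; it is quoted as \cite[Theorem~2]{Simon_Toth} and used only as a black box to obtain the corollary that follows. There is therefore no proof in the paper to compare your attempt against.

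As for the attempt itself: the architecture you lay out --- the $L^2$ correlation criterion via mollification, Fatou to bring the parameter integral inside, factoring the difference series $g_\omega$ by the index of the first nonzero digit, a transversality estimate for the normalized series, and the extension to all of $(1/N,1)$ through the convolution identity expressing $\mathfrak{m}_\lambda^N$ as a $t$-fold convolution of scaled copies of $\mathfrak{m}_{\lambda^t}^N$ --- is exactly the Peres--Solomyak scheme on which the cited result is built, and every step but one is correctly sketched. The exception is the transversality step itself. You correctly observe that the naive $*$-function dominated by $1-(N-1)\lambda/(1-\lambda)$ has its root at $1/N$ and so gives nothing, and you write that ``one must construct a sharper $*$-function that pushes $\lambda_*$ \ldots\ up to $\lambda_*\ge N^{-1/2}$''; but you do not construct it, and this construction is precisely the technical content of the theorem. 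Without it the bootstrap cannot start: the chain of intervals $\big((1/N)^{1/t},\lambda_*^{1/t}\big)$ covers $(1/N,1)$ only when $\lambda_*\ge N^{-1/2}$, and establishing that threshold is nontrivial --- already for $N=2$ the standard double-zero bound for the class of power series with coefficients in $\{-1,0,1\}$ falls short of $2^{-1/2}$, and closing the gap requires additional work of exactly the kind the cited paper supplies. So what you have written is a correct and well-organized plan with the decisive lemma asserted rather than proved.
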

From this is is immediate to see that the following Corollary holds:
\begin{corollary}\label{j45}Here we use the notation of Theorem \ref{j44}. Let
  \begin{equation}\label{j37}
    E_3:=\left\{c\in \left(\frac{1}{\sqrt{3}},1\right):
  c^2\in R_3\right\}.
  \end{equation}

Then the Lebesgue measure of $E_3$ is zero and  $\mathfrak{m}_{c}^{N}$
  is absolutely continuous with continuous (consequently bounded) density for each
  $c\in\left(\frac{1}{\sqrt{3}},1\right)\setminus E_3$.
\end{corollary}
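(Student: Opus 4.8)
The plan is to deduce the statement directly from Theorem~\ref{j44} by relating $\mathfrak{m}_c^N$ to $\mathfrak{m}_{c^2}^N$ through a convolution identity, which is exactly what lets me upgrade the $L^2$ conclusion of Theorem~\ref{j44} to a \emph{continuous} (hence bounded) density. (The argument works for any $N$; here $N=3$, and $c\in(1/\sqrt{3},1)$ is equivalent to $c^2\in(1/3,1)$.) Realizing $\mathfrak{m}_c^N$ as the distribution of the random series $X=\sum_{k\ge 0}c^kX_k$, where the $X_k$ are i.i.d.\ uniform on $\{0,1,\dots,N-1\}$, I would split the sum along even and odd indices:
\[
X=\sum_{k\ge0}c^{2k}X_{2k}+c\sum_{k\ge0}c^{2k}X_{2k+1}=Y+cZ ,
\]
where $Y$ and $Z$ are independent and each distributed according to $\mathfrak{m}_{c^2}^N$. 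This yields the identity $\mathfrak{m}_c^N=\mathfrak{m}_{c^2}^N*(T_c)_*\mathfrak{m}_{c^2}^N$, with $T_c(z)=cz$.

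Next, for $c\in\left(\tfrac{1}{\sqrt3},1\right)\setminus E_3$ we have $c^2\in(1/3,1)\setminus R_3$, so Theorem~\ref{j44} applies and $\mathfrak{m}_{c^2}^N$ is absolutely continuous with an $L^2$ density $\rho$. The rescaled copy $(T_c)_*\mathfrak{m}_{c^2}^N$ then has density $c^{-1}\rho(\cdot/c)$, which again lies in $L^2$. Since the convolution of two $L^2$ functions on $\mathbb{R}$ is bounded (by Cauchy--Schwarz) and continuous (because translation acts continuously on $L^2$), the convolution identity above shows that $\mathfrak{m}_c^N$ possesses a continuous and bounded density, as claimed.

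Finally, to see that $E_3$ is Lebesgue null I would observe that $\phi(c)=c^2$ is a $C^1$ diffeomorphism of $\left(\tfrac{1}{\sqrt3},1\right)$ onto $(1/3,1)$ whose derivative $\phi'(c)=2c$ is bounded away from $0$ and from $\infty$; hence $\phi$ is bi-Lipschitz, and $E_3=\phi^{-1}(R_3)$ is the preimage of the Lebesgue-null set $R_3$ under a bi-Lipschitz map, so it is itself Lebesgue null.

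The genuinely routine parts are the null-set preservation and the standard fact that the convolution of two $L^2$ functions is continuous and bounded. The one step deserving care is the even/odd decomposition of the infinite series: I must verify that $Y$ and $Z$ are genuinely independent and that each has \emph{exactly} the law $\mathfrak{m}_{c^2}^N$, since the entire argument hinges on producing two independent $L^2$ factors out of a single $L^2$ measure.
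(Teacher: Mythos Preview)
Your proof is correct and supplies precisely the standard argument that the paper leaves implicit (the paper only says ``From this it is immediate to see that the following Corollary holds''). The even/odd splitting giving $\mathfrak{m}_c^N=\mathfrak{m}_{c^2}^N*(T_c)_*\mathfrak{m}_{c^2}^N$, together with the fact that a convolution of two $L^2$ functions is continuous and bounded, is exactly the intended route, and your verification that $E_3$ is null via the bi-Lipschitz map $c\mapsto c^2$ is fine.
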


\subsection{Example for the direction-$y$ dominates case} Before we show an example for Theorem
\ref{thm:c<b}, we show a family of IFS of similarities, which satisfies Hochman's exponential separation condition.

\begin{lemma}\label{lem:Hoch}
	Let $\Psi:=\{x\mapsto\alpha x+\tau_1,x\mapsto\alpha x+\tau_2,x\mapsto\alpha x+\tau_3\}$ be an IFS on the real line such that $\alpha, \tau_1,\tau_2\in\Q$, and $\tau_3\notin\Q$. Then $\Psi$ satisfies Hochman's exponential separation condition.
\end{lemma}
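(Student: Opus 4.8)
The plan is to reduce to a normalized form, identify the only obstruction to separation with rational approximation of $\tau_3$, and then extract a good subsequence of scales from the continued fraction expansion of $\tau_3$.

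First I would conjugate $\Psi$ by a suitable rational affine map to arrange $\tau_1=0$, $\tau_2=1$, writing $\theta:=\tau_3\notin\Q$; this rescales $\Delta_n$ by a fixed multiplicative constant and leaves $\alpha$ and the irrationality of $\tau_3$ intact, so it preserves Hochman's condition. Since every word of length $n$ has the same derivative $\alpha^n$, we have $\Delta_n=\min_{\il\neq\jl}|\psi_{\il}(0)-\psi_{\jl}(0)|$, and a direct computation gives
\[
\psi_{\il}(0)-\psi_{\jl}(0)=\sum_{k=1}^n(\tau_{i_k}-\tau_{j_k})\alpha^{k-1}=A+B\theta,
\]
where $A=\sum_k\delta_k\alpha^{k-1}$, $B=\sum_k\epsilon_k\alpha^{k-1}$, and $\delta_k,\epsilon_k\in\{-1,0,1\}$ record, at each coordinate, the rational part of $\tau_{i_k}-\tau_{j_k}$ and whether exactly one of $i_k,j_k$ equals $3$.

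Next I would rule out exact overlaps. Writing $\alpha=a/b$ in lowest terms (so $b\geq2$, as $0<|\alpha|<1$), the rational root theorem shows $\alpha$ is not a zero of any nonzero polynomial with coefficients in $\{-1,0,1\}$: after factoring out the lowest power, such a polynomial would have constant and leading coefficients $\pm1$, forcing $b\mid1$. Hence $B\neq0$ whenever some $\epsilon_k\neq0$, and $A\neq0$ when all $\epsilon_k=0$ but the words differ. This splits $D:=A+B\theta$ into two cases. If $B=0$ then $D=A\in\Q$ and, since $b^{n-1}A\in\mathbb{Z}\setminus\{0\}$, we get $|D|\geq c\,b^{-n}$. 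If $B\neq0$ then $b^{n-1}B$ is a nonzero integer, so $|B|\geq b^{-(n-1)}$, while $-A/B$ is a rational whose reduced denominator is bounded by $C'b^{n}$ for a constant $C'$.

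The main obstacle is the case $B\neq0$ with $\theta$ a Liouville number, where $|D|=|B|\cdot|\theta+A/B|$ can be super-exponentially small at individual scales; the point is that Hochman's condition only demands a subsequence, and I would build it from the convergents $p_k/q_k$ of $\theta$. For each $k$ choose $n_k$ maximal with $C'b^{n_k}<q_{k+1}$. Then every achievable $-A/B$ is a reduced fraction $p/q$ with $q<q_{k+1}$, so the best-approximation property of convergents (for such $p/q$ one has $|\theta-p/q|\geq 1/(2qq_{k+1})$) yields $|\theta+A/B|\geq c''\,b^{-2n_k}$ uniformly over the case-$B$ pairs; with $|B|\geq b^{-(n_k-1)}$ this gives $|D|\geq c\,b^{-3n_k}$, and combined with the $B=0$ bound we obtain $\Delta_{n_k}\geq c\,b^{-3n_k}$. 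Since $q_{k+1}\to\infty$ forces $n_k\to\infty$, passing to a strictly increasing subsequence and taking any $\varepsilon<b^{-4}$ gives $\Delta_{n_k}>\varepsilon^{n_k}$ for large $k$, which is exactly Hochman's exponential separation condition. The only genuinely delicate step is this synchronization of the scales $n_k$ with the denominator jumps of the continued fraction; the algebraic (rational root) input and the case $B=0$ are routine.
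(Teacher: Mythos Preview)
Your argument is correct. The normalization, the decomposition $D=A+B\theta$ with $\{-1,0,1\}$-coefficient polynomials, the rational root input ruling out $A=B=0$, and the continued-fraction synchronization all go through as you describe; the bound $|\theta-p/q|\geq 1/(2qq_{k+1})$ for $q<q_{k+1}$ is exactly the law of best approximation, and the maximality of $n_k$ gives $q_{k+1}\leq C'b^{n_k+1}$, which closes the estimate $\Delta_{n_k}\geq c\,b^{-3n_k}$.

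The paper's proof shares your setup (same normalization, same decomposition $\pi(\il)-\pi(\jl)=p_1/q^k+\tau_3\,p_2/q^k$, same rational-root handling of $p_2=0$) but diverges in the $B\neq0$ case. Instead of constructing a good subsequence via convergents, it argues by contradiction: assuming the condition fails with $\varepsilon=1/(16q^3)$, one gets rational approximants $p_1(k)/p_2(k)$ to $\tau_3$ with error $\leq(16q^2)^{-k}$, and since two \emph{distinct} rationals with denominators bounded by $(2q)^k,(2q)^{k+1}$ are at least $(2q)^{-(2k+1)}$ apart, consecutive approximants must eventually coincide, forcing $\tau_3$ rational. So the paper uses only the elementary gap principle for rationals, avoiding continued fractions entirely; your route is a bit more machinery-heavy but has the advantage of being constructive and of making transparent exactly which scales $n_k$ work.
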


The proof is similar to the proof of \cite[Theorem~1.6]{hochman2012self}.

\begin{proof}
	Without loss of generality, we may assume that $\tau_1=0, \tau_2=1$, and $0<\tau_3<1$ irrational. For any finite length word $\il\in\left\{1,2,3\right\}^{*}$, let $\pi(\il)=\sum_{k=0}^{|\il|}\tau_{i_k}\alpha^k.$
	
	By \eqref{k78}, it is enough to show that there exists $\varepsilon>0$ and a sequence $n_k$ such that for any $\il\neq\jl$ with $|\il|=|\jl|=n_k$, $|\pi(\il)-\pi(\jl)|>\varepsilon^{n_k}$. Suppose that this is not the case. That is, for every $\varepsilon>0$ there exists a $K$ such that for every $k\geq K$ there exist finite words $\il_k,\jl_k$ with $|\il_k|=|\jl_k|=k$ such that
	\begin{equation}\label{eq:contra}|\pi(\il_k)-\pi(\jl_k)|<\varepsilon^{k}.\end{equation}
	
	By definition, $\alpha=p/q$, where $p,q\in\N$ relative primes and $q\neq0$. Thus, for any $\il\neq\jl$ with $|\il|=|\jl|=k$,
	$$
	\pi(\il)-\pi(\jl)=\frac{p_1(\il,\jl)}{q^k}+\tau_3\frac{p_2(\il,\jl)}{q^k},
	$$
	where $p_1(\il,\jl),p_2(\il,\jl)\in\N$. Observe that
	$$
	|p_i(\il,\jl)|\leq(2q)^k
	$$
	for every $k\geq1$ and $\il,\jl$ with $|\il|=|\jl|=k$. On the other hand, if $p_2(\il,\jl)=0$ and $\il\neq\jl$ then $p_1(\il,\jl)\neq0$, because of the rational root test of polynomials with integer coefficients. Thus,
	$|\pi(\il)-\pi(\jl)|\geq1/q^k$.
	
	Hence, by taking $\varepsilon=1/(16q^3)$ and the sequence of words $\il_k,\jl_k$ with $|\il_k|=|\jl_k|=k$, given in the equation \eqref{eq:contra}, we get
	\begin{equation}
	\label{eq:contra2}
	\left|\frac{p_1(\il_k,\jl_k)}{p_2(\il_k,\jl_k)}-\tau_3\right|=	 \left|\pi(\il_k)-\pi(\jl_k)\right|\frac{q^{k}}{|p_2(\il_k,\jl_k)|}\leq\frac{1}{(16q^2)^{k}}.
	\end{equation}
	Therefore
	$$
	\left|\frac{p_1(\il_{k+1},\jl_{k+1})}{p_2(\il_{k+1},\jl_{k+1})}-\frac{p_1(\il_k,\jl_k)}{p_2(\il_k,\jl_k)}\right|\leq\frac{2}{(16q^2)^{k}}.
	$$
	But since the left hand side is a rational number then
	$$
	\text{either }\left|\frac{p_1(\il_{k+1},\jl_{k+1})}{p_2(\il_{k+1},\jl_{k+1})}-\frac{p_1(\il_k,\jl_k)}{p_2(\il_k,\jl_k)}\right|>\frac{1}{(2q)^{2k+1}}\text{ or }\left|\frac{p_1(\il_{k+1},\jl_{k+1})}{p_2(\il_{k+1},\jl_{k+1})}-\frac{p_1(\il_k,\jl_k)}{p_2(\il_k,\jl_k)}\right|=0.
	$$
	Thus, for every $k$ large enough $p_1(\il_k,\jl_k)/p_2(\il_k,\jl_k)\equiv P/Q$, and by \eqref{eq:contra2}, $\tau_3=P/Q$, which contradicts to the assumption that $\tau_3$ is irrational.
\end{proof}

\begin{example}\label{j33}
	Let $\mathcal{S}$ be an IFS of the form \eqref{k16}, where
	\begin{itemize}
		\item $N=3$;
		\item $c,b\in\Q$ such that $c<\min\{1/3,b\}$, $b<1$ and
		$$
		b<\min\left\{\sqrt{c},c^{1+\frac{\log3}{2\log(3c)}}\right\};
		$$
		\item $d_{i\mod3},d_{i+1\mod3}\in\Q$ and $d_{i+2\mod3}\notin\Q$ for an $i\in\N$;
		\item $u_{j\mod3},u_{j+1\mod3}\in\Q$ and $u_{j+2\mod3}\notin\Q$ for an $j\in\N$.
	\end{itemize}
	(See  Figure \ref{j34}.)
\end{example}

\begin{figure}[H]
  \centering
  \includegraphics[width=7cm]{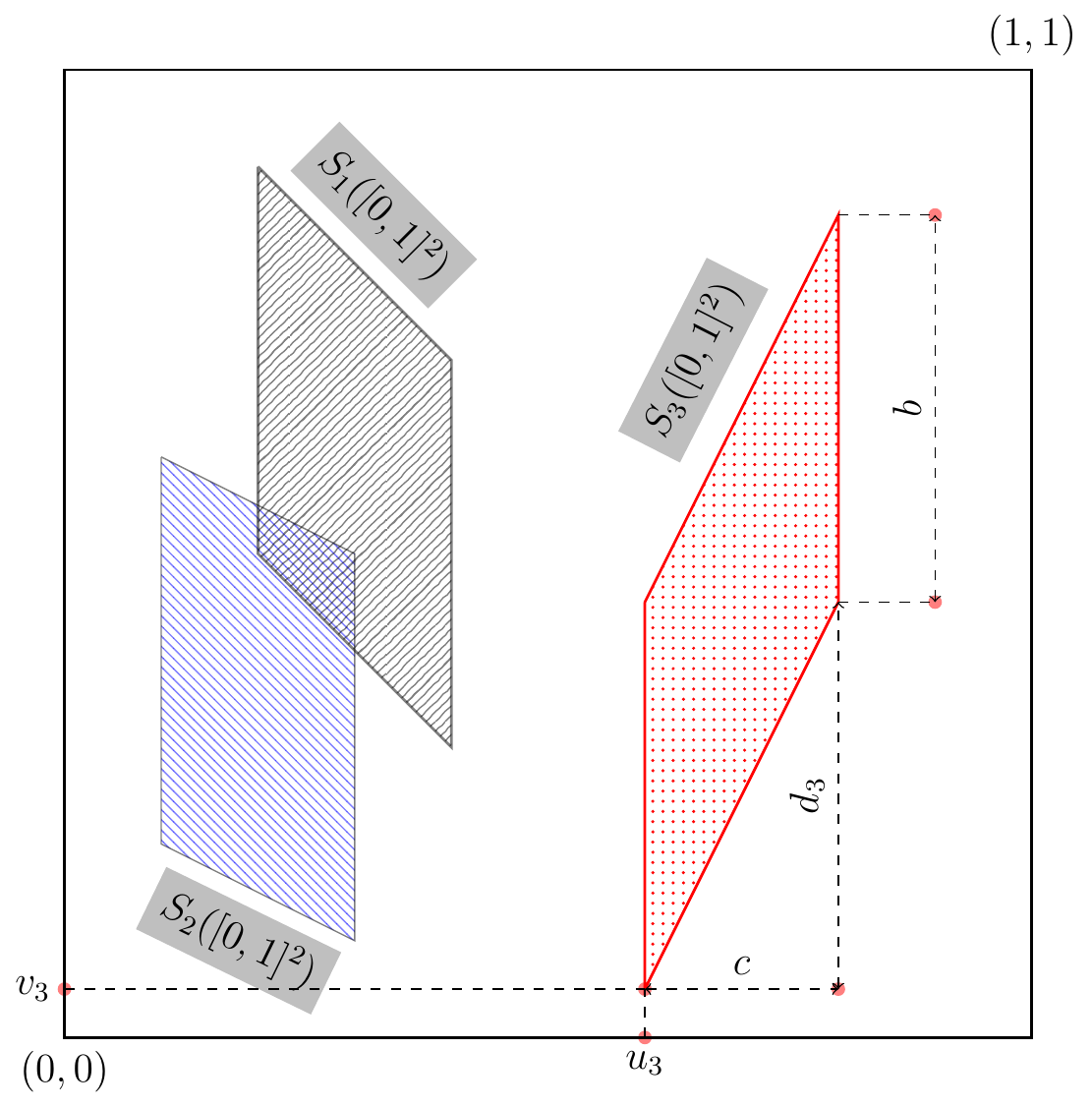}
\caption{Figure for Example \ref{j33}}\label{j34}
\end{figure}

Lemma~\ref{lem:Hoch} implies that $\mathcal{H}$ (defined in \eqref{k58}) and the backward Furstenberg IFS $\mathcal{B}$ (defined in \eqref{eq:backfurst}) satisfy Hochman's exponential separation condition. Moreover, the assumptions on $c,b$ verify that (C1), (C2) and (C5) hold in Assumption~C.

\section{Appendix}

Here we collect some consequences of the fact that a measure on the line is absolute continuous with $L^q$ density for a $q>1$.

Given a measure $\mu\ll\mathcal{L}_1$ supported by $[0,1]$ and we assume that there exists a $q>1$ such that the density $\varphi$ of $\mu$ satisfies:
  \begin{equation}\label{L1}
    C_q:=\int\limits_{0}^{1}\varphi(t)^qdt<\infty .
  \end{equation}

  \begin{lemma} \label{lem:add1}
Let $\varphi(t)$ be the density of $\nu_x$. Assume that \eqref{L1} holds.
Then for any interval $I$
\[
\mu(I) \leq C_q^{1/q} |I|^{1-1/q}.
\]
\end{lemma}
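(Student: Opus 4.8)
The plan is to recognize this as a direct application of H\"older's inequality, since the hypothesis \eqref{L1} is precisely the statement that the density $\varphi$ belongs to $L^q([0,1])$. First I would use absolute continuity to write the measure of an interval as an integral of the density against the indicator of the interval, namely
\[
\mu(I)=\int_0^1 \varphi(t)\,\ind_{I}(t)\,dt,
\]
where without loss of generality $I\subseteq[0,1]$ (otherwise replace $I$ by $I\cap[0,1]$, which only decreases the left-hand side while $|I\cap[0,1]|\le|I|$).

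Next I would apply H\"older's inequality to the two factors $\varphi$ and $\ind_I$ with conjugate exponents $q$ and $q':=q/(q-1)$. This yields
\[
\int_0^1 \varphi(t)\,\ind_I(t)\,dt
\le
\left(\int_0^1 \varphi(t)^q\,dt\right)^{1/q}
\left(\int_0^1 \ind_I(t)^{q'}\,dt\right)^{1/q'}.
\]
The first factor is exactly $C_q^{1/q}$ by definition \eqref{L1}. For the second factor I would observe that $\ind_I^{q'}=\ind_I$, so $\int_0^1 \ind_I(t)^{q'}\,dt=|I|$, and that the conjugacy relation gives $1/q'=1-1/q$. Substituting both evaluations produces the claimed bound $\mu(I)\le C_q^{1/q}\,|I|^{1-1/q}$.

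There is essentially no genuine obstacle here; the only point requiring a moment of care is the bookkeeping of the conjugate exponent, ensuring $1/q+1/q'=1$ so that the power of $|I|$ comes out as $1-1/q$, together with the trivial but convenient simplification $\ind_I^{q'}=\ind_I$ valid because $\ind_I$ takes only the values $0$ and $1$. The hypothesis $q>1$ is exactly what guarantees $q'$ is finite and the exponent $1-1/q$ is strictly positive, so the estimate is nontrivial (it degenerates to $\mu(I)\le C_q^{1/q}$ as $q\to\infty$ and to the trivial $\mu(I)\le 1$ as $q\to 1$).
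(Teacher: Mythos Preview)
Your proof is correct and essentially the same as the paper's. The only cosmetic difference is that the paper phrases the key step as Jensen's inequality applied to $x\mapsto x^q$ on the interval $I$, obtaining $\int_I\varphi^q\ge(\mu(I))^q|I|^{1-q}$, while you phrase it as H\"older's inequality with exponents $q$ and $q'=q/(q-1)$; after rearranging, the two inequalities are identical.
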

\begin{proof}
By Jensen's inequality  we have
  \begin{equation}\label{L5}
    \int\limits_{I}\varphi^q(t)dt \geq
    \Bigg(\underbrace{\int\limits_{I}^{}\varphi(t)dt}_{\mu(I)} \Bigg)^q
     \cdot |I|^{1-q}.
  \end{equation}
  This completes the proof after some obvious algebraic manipulations.
\end{proof}

\begin{definition}Given a real number $R>1$.
  We say that a not necessarily countable family $\mathcal{I}:=\left\{I_i\right\}_{i\in\Gamma}$, $I_i\subset [0,1]$ of closed  intervals is an \textbf{$R$-bad family}  if
  \begin{equation}\label{L2}
    \frac{\mu(I_i)}{|I_i|}>R,\quad \forall i\in\Gamma.
  \end{equation}
  We say that $\mathcal{I}$ is a \textbf{disjoint $R$-bad family} of intervals if $\mathcal{I}$ is an $R$-bad family and the intervals in $\mathcal{I}$ are pairwise disjoint.
\end{definition}

\begin{lemma}There exists a constant $c_1>0$ independent of everything such that for every  $R>1$ if
  $\mathcal{I}$ is an $R$-bad family of intervals
  then
  \begin{equation}\label{L3}
    \mu\left\{\bigcup\limits_{i\in\Gamma} I_i\right\}<c_1 \cdot R^{-(q-1)}.
  \end{equation}
\end{lemma}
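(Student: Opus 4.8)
The plan is to realize the union $U:=\bigcup_{i\in\Gamma}I_i$ as a superlevel set of the Hardy--Littlewood maximal function of $\varphi$, and then to extract the sharp exponent $q-1$ by using the \emph{full} $L^q$-strength of the maximal operator rather than merely its weak $(1,1)$ bound. Let $M\varphi$ denote the uncentered maximal function, $M\varphi(x):=\sup_{I\ni x}\frac{1}{|I|}\int_I\varphi\,d\mathcal{L}_1$, the supremum being over all intervals $I$ containing $x$. The $R$-bad hypothesis says exactly that for each $x\in U$ there is an interval $I_i\ni x$ with $\frac{1}{|I_i|}\int_{I_i}\varphi=\frac{\mu(I_i)}{|I_i|}>R$; hence $M\varphi(x)>R$, and so $U\subseteq E_R:=\{x:M\varphi(x)>R\}$. (Each connected component of $U$ is an interval and contains a rational, so $U$ is a countable union of intervals, in particular measurable, and $\mu(U)\le\mu(E_R)=\int_{E_R}\varphi\,d\mathcal{L}_1$.)

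Next I would estimate the Lebesgue size of $E_R$. Since $q>1$, the Hardy--Littlewood maximal theorem gives a constant $A_q$ with $\|M\varphi\|_q\le A_q\|\varphi\|_q=A_q C_q^{1/q}$. Chebyshev's inequality then upgrades the usual $R^{-1}$ decay of the distribution function to a $q$-th power decay:
\[
\mathcal{L}_1(E_R)\le R^{-q}\,\|M\varphi\|_q^{\,q}\le A_q^{q}\,C_q\,R^{-q}.
\]

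Finally I would combine this with Hölder's inequality applied to $\int_{E_R}\varphi\,d\mathcal{L}_1=\int_{E_R}\varphi\cdot\mathbf 1\,d\mathcal{L}_1$, using conjugate exponents $q$ and $q/(q-1)$:
\[
\mu(U)\le\int_{E_R}\varphi\,d\mathcal{L}_1\le\Big(\int_{E_R}\varphi^q\Big)^{1/q}\mathcal{L}_1(E_R)^{(q-1)/q}\le C_q^{1/q}\big(A_q^{q}C_q R^{-q}\big)^{(q-1)/q}=A_q^{\,q-1}C_q\,R^{-(q-1)}.
\]
Thus $c_1:=A_q^{\,q-1}C_q$ works; it depends only on $q$ and on $C_q$ (through the universal maximal constant $A_q$), hence is independent of $R$ and of the family $\mathcal{I}$, as required.

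The main obstacle is precisely obtaining the exponent $q-1$ and not the weaker $(q-1)/q$. Indeed, the crude bound $\mathcal{L}_1(E_R)\le C/R$ coming from the weak $(1,1)$ inequality, fed into the same Hölder step, only yields $R^{-(q-1)/q}$; and a disjointification of $\mathcal{I}$ via a Vitali covering followed by Lemma~\ref{lem:add1} runs into the same loss, because the total mass on $E_R$ is concentrated on the very small Lebesgue set where $\varphi$ is large. The decisive point is therefore that $\varphi\in L^q$ forces $M\varphi\in L^q$, giving the $q$-th power decay $\mathcal{L}_1(E_R)\lesssim R^{-q}$, which Hölder then converts into the optimal $R^{-(q-1)}$. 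A small point to verify carefully is that the \emph{uncentered} maximal function must be used so that the inclusion $U\subseteq E_R$ holds with the original intervals $I_i$ rather than dilated ones.
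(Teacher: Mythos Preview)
Your argument is correct. The inclusion $U\subseteq\{M\varphi>R\}$ is immediate from the definition of the uncentered maximal function, the strong $(q,q)$ bound for $M$ gives $\mathcal{L}_1(\{M\varphi>R\})\le A_q^q C_q R^{-q}$, and H\"older converts this into $\mu(U)\le A_q^{q-1}C_q\,R^{-(q-1)}$. The exponent is exactly the one claimed.

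The paper takes a more hands-on route: it first treats the case of a \emph{disjoint} $R$-bad family, where Jensen's inequality on each $I_i$ gives $\int_{I_i}\varphi^q\ge\mu(I_i)\,(\mu(I_i)/|I_i|)^{q-1}>R^{q-1}\mu(I_i)$, and summing yields $\mu(\bigcup I_i)\le C_q R^{-(q-1)}$ directly. For the general case it invokes Besicovitch's covering theorem to extract a bounded number $Q$ of disjoint subfamilies of doubled intervals, each of which is $(R/2)$-bad, and sums the disjoint-case estimates, arriving at $c_1=Q\cdot C_q\cdot 2^{q-1}$. Your proof packages precisely the same ingredients (a covering argument plus the $L^q$-integral of $\varphi$) into the single black box of the Hardy--Littlewood maximal theorem; the gain is brevity and a cleaner bookkeeping of constants, while the paper's version has the minor advantage of being self-contained (one does not need to quote the strong maximal inequality, only Besicovitch). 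Your closing remark about the uncentered maximal function is apt, though even the centered version would work at the cost of a factor $2^{q-1}$ in $c_1$, exactly the factor the paper also picks up when passing from $I_i$ to the doubled intervals $\widetilde{I}_i(x)$.
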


 \begin{proof}[Proof for disjoint $R$-bad families]
  Fix an $R>1$. By Lemma \ref{lem:add1} and equation \eqref{L5}
   for every $i\in\Gamma$ we have
  \begin{equation}\label{L5}
  C_q \geq
     \mu(I_i) \cdot \left(\frac{\mu(I_i)}{|I_i|}\right)^{q-1}.
  \end{equation}
  Using \eqref{L2} and the fact that temporarily we assumed that $\mathcal{I}$ is a disjoint family
we obtain that
\begin{equation}\label{L6}
  \mu\left(\bigcup\limits_{i\in\Gamma} I_i\right) \leq  C_q \cdot R^{-(q-1)}.
\end{equation}
 \end{proof}

\begin{proof}[Proof for non-disjoint $R$-bad families]
Fix an $R>1$. For every $i\in\Gamma$ and for every $x\in I_i$ let
$$
\widetilde{I}_i(x):=\left[x-|I_i|,x+|I_i|\right]
$$

Let $A:=\bigcup\limits_i I_i$ and we form the family $\mathcal{B}$ of  closed (one dimensional) balls $\left\{\widetilde{I}_i(x)\right\}_{i,x\in I_i}$. Then by definition we can apply Besicovitch's covering theorem (using the notation of \cite[p. 30, Theorem 2.7 part (2)]{mattila1999geometry}) for $A$ and $\mathcal{B}$.  From this theorem there is a constant $Q$ independent of everything such that we can find $Q$ sub-families
$\mathcal{B}_1, \dots ,\mathcal{B}_Q$ of $\mathcal{B}$ such that

\begin{description}
  \item[(a)] for every $k$, $\mathcal{B}_k$ is a disjoint family. That is for any two distinct intervals $I,J\in B_k$ we have $I\cap J=\emptyset$.
      \item[(b)] If $J\in \mathcal{B}_k$ then
      $J=\left[x-|I_i|,x+|I_i|\right]$ for some $I_i\in\mathcal{I}$ and $x\in I_i$. Then
      \begin{equation}\label{L7}
        \frac{\mu(J)}{|J|} \geq \frac{\mu(I_i)}{2|I_i|} \geq
        \frac{1}{2}\frac{\mu(I_i)}{|I_i|} >\frac{1}{2} \cdot R
      \end{equation}
  \item[(c)] $A\subset \bigcup\limits_{k=1}^Q
  \bigcup\limits_{J\in \mathcal{B}_k}J$.

\end{description}

Using (a) and (b) for every $k$, $\mathcal{B}_k$ is a disjoint $R/2$-bad family.
Since we have already verified the assertion of the lemma for disjoint families we can apply \eqref{L6}
for $\mathcal{B}_k$ (with writing  $R/2$ instead of $R$) to get  that for all $k=1, \dots ,Q$:
\begin{equation}\label{L8}
  \mu\left\{\bigcup\limits_{J\in\mathcal{B}_k}J\right\}<  C_q \cdot 2^{q-1} \cdot R^{-(q-1)}.
\end{equation}
Using this and (c) above we obtain that
\begin{equation}\label{L9}
  \mu\left(A\right) \leq \underbrace{Q  C_q 2^{q-1}}_{c_1} \cdot R^{-(q-1)}.
\end{equation}

 \end{proof}
 From we get the following two corollaries:
\begin{corollary}\label{L10}
  Let $\mathcal{I}_n^\delta=\left\{I_i^{n,\delta}\right\}$ be a family of intervals of $|I_i^{(n,\delta)}|=c^n$ and $\mu(I_i^{(n,\delta)})>c^{n(1-\delta)}$. Then $R=c^{-n\delta}$. So,
  \begin{equation}\label{k52}
    \mu\left(\bigcup\limits_i I_i^{(n,\delta)}\right)<c_1 \cdot c^{n\delta(q-1)}.
  \end{equation}
\end{corollary}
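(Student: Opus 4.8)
The plan is to recognize that $\mathcal{I}_n^\delta$ is nothing but an $R$-bad family (in the sense of \eqref{L2}) for the explicit value $R=c^{-n\delta}$, and then to invoke the lemma giving \eqref{L3} verbatim. First I would compute, for each interval $I_i^{(n,\delta)}\in\mathcal{I}_n^\delta$, the density ratio
\[
\frac{\mu(I_i^{(n,\delta)})}{|I_i^{(n,\delta)}|}>\frac{c^{n(1-\delta)}}{c^{n}}=c^{-n\delta}=:R.
\]
Since $0<c<1$ and $\delta>0$, we have $R=c^{-n\delta}>1$, so that $R$ is a legitimate parameter for the lemma, and the displayed inequality is precisely the defining condition \eqref{L2} of an $R$-bad family with this $R$. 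No disjointness is needed, as the lemma has been established for arbitrary (not necessarily countable or disjoint) $R$-bad families.

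Next I would apply the lemma giving \eqref{L3} to the family $\mathcal{I}_n^\delta$ with this $R$. The standing $L^q$ hypothesis \eqref{L1} of the Appendix is in force (it is exactly what the lemma requires), so no additional assumption is needed. The lemma then yields
\[
\mu\Big(\bigcup_i I_i^{(n,\delta)}\Big)<c_1\cdot R^{-(q-1)}=c_1\cdot\big(c^{-n\delta}\big)^{-(q-1)}=c_1\cdot c^{n\delta(q-1)},
\]
which is exactly \eqref{k52}, with $c_1$ the same universal constant produced by the lemma.

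There is essentially no obstacle here: the corollary is a direct specialization of the lemma, the only points to verify being that the assumed lower bound on $\mu(I_i^{(n,\delta)})/|I_i^{(n,\delta)}|$ translates into the $R$-bad condition and that $R>1$. The single subtlety worth flagging is the direction of the exponent: because $c<1$, raising $R=c^{-n\delta}$ to the power $-(q-1)$ converts the negative exponent into the positive power $c^{n\delta(q-1)}$, so the bound indeed decays to zero as $n\to\infty$, as it must for the corollary to be useful in the applications (e.g.\ the estimate \eqref{k42} of $\nu_x(\mathrm{Bad}^2_{\ell,\delta})$ in the proof of Lemma~\ref{k35}).
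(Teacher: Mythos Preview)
Your proof is correct and follows exactly the approach the paper intends: the corollary is stated immediately after the lemma with no separate proof, as a direct specialization obtained by computing $R=\mu(I_i^{(n,\delta)})/|I_i^{(n,\delta)}|>c^{-n\delta}$ and plugging into \eqref{L3}. One tangential slip: the application you cite, the bound \eqref{k42} on $\nu_x(\mathrm{Bad}^2_{\ell,\delta})$, actually invokes Corollary~\ref{L11} rather than this corollary.
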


\begin{corollary}\label{L11}Let $n \geq 1$ be arbitrary.
  Let $\pmb{\omega}\in\left\{1, \dots ,N\right\}^n$ and we write $I_{\pmb{\omega}}$ for the interval which supports
   $\Pi_x(\pmb{\omega})$ on the $x$-axis. We assume that $Nc>1$.
   Let
   $$
   L_{n,\delta}:=\left\{t\in[0,1]:
   \sum\limits_{|\pmb{\omega}|=n}\ind_{I_{\pmb{\omega}}}(t)>
   \left(Nc\right)^{n(1+\delta)}
   \right\}
   $$
   For every $t\in L_{n,\delta}$ let
   $$
   J_{t}:=\left[t-c^n,t+c^n\right].
   $$
   Then
   $$
   \frac{\mu(J_t)}{|J_t|}>\frac{1}{2}(Nc)^{n\delta}.
   $$
   That is we may apply \eqref{L3} for $\left\{J_t\right\}_{t\in{L_{n,\delta}}}$ to get that
   \begin{equation}\label{L12}
     \mu\left(L_{n,\delta}\right) \leq \mu  \left(\bigcup\limits_{t\in{L_{n,\delta}}}J_t\right)
      \leq c_12^{q-1}(Nc)^{-n\delta(q-1)}.
   \end{equation}
   recall that the constant $c_1$ was defined in \eqref{L9}, so it is independent of $\delta$.
\end{corollary}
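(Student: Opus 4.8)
The plan is to transfer the overlap count that defines $L_{n,\delta}$ from the line to the symbolic space $\Sigma$, where the level-$n$ cylinders are genuinely disjoint, and then feed the resulting density estimate into the bad-family bound \eqref{L3}. Throughout I use that, in the notation of this appendix, $\mu=\nu_x=(\Pi_{\mathcal{H}})_*P$, where $P$ denotes the uniform Bernoulli measure on $\Sigma$, so that $P([\pmb{\omega}])=N^{-n}$ for every word $\pmb{\omega}$ of length $n$.

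First I would record the elementary geometric fact that each cylinder interval $I_{\pmb{\omega}}$ with $|\pmb{\omega}|=n$ has length $c^n$; hence if $t\in I_{\pmb{\omega}}$, then $I_{\pmb{\omega}}\subset[t-c^n,t+c^n]=J_t$. Consequently, for every such $\pmb{\omega}$ the symbolic cylinder $[\pmb{\omega}]$ lies in $\Pi_{\mathcal{H}}^{-1}(I_{\pmb{\omega}})\subset\Pi_{\mathcal{H}}^{-1}(J_t)$.

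The key step is the lower bound on $\mu(J_t)$. Since the cylinders $[\pmb{\omega}]$, $\pmb{\omega}\in\Sigma_n$, are pairwise disjoint in $\Sigma$, summing their $P$-measures is legitimate even though the projected intervals $I_{\pmb{\omega}}$ overlap heavily — indeed all of them contain $t$. Thus
\[
\mu(J_t)=P\big(\Pi_{\mathcal{H}}^{-1}(J_t)\big)\geq\sum_{\pmb{\omega}:\,t\in I_{\pmb{\omega}}}P([\pmb{\omega}])=N^{-n}\,\#\{\pmb{\omega}\in\Sigma_n:t\in I_{\pmb{\omega}}\}=N^{-n}\sum_{|\pmb{\omega}|=n}\ind_{I_{\pmb{\omega}}}(t).
\]
For $t\in L_{n,\delta}$ the last sum exceeds $(Nc)^{n(1+\delta)}$, so $\mu(J_t)>N^{-n}(Nc)^{n(1+\delta)}=c^n(Nc)^{n\delta}$. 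Dividing by $|J_t|=2c^n$ yields $\mu(J_t)/|J_t|>\tfrac12(Nc)^{n\delta}$, which is the asserted density bound; note it holds for every $n$.

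It then remains to invoke \eqref{L3} with $R:=\tfrac12(Nc)^{n\delta}$. The family $\{J_t\}_{t\in L_{n,\delta}}$ is an $R$-bad family by the density bound just proved, and since $t\in J_t$ we have $L_{n,\delta}\subset\bigcup_t J_t$, so $\mu(L_{n,\delta})\leq\mu\big(\bigcup_t J_t\big)\leq c_1 R^{-(q-1)}=c_1 2^{q-1}(Nc)^{-n\delta(q-1)}$, which is \eqref{L12}; in the finitely many cases where $R\leq1$ the stated bound is trivial, since $\mu$ is a probability measure and the right-hand side is then at least $c_1\geq1$. The only genuine subtlety — and the reason the estimate survives the uncontrolled overlaps of the $I_{\pmb{\omega}}$ — is the passage to $\Sigma$ in the displayed inequality, where disjointness of symbolic cylinders converts overlap multiplicity into honest additive mass; everything else is bookkeeping.
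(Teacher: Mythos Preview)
Your proof is correct and is essentially the argument the paper has in mind: the corollary is stated in a self-proving way, and the only step left implicit there is precisely the density bound $\mu(J_t)/|J_t|>\tfrac12(Nc)^{n\delta}$, which you justify by the natural pull-back to $\Sigma$ and disjointness of level-$n$ symbolic cylinders. Your handling of the small-$n$ case $R\leq 1$ is a nice extra touch the paper does not make explicit.
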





\begin{corollary} \label{cor:add2}
Assume $\nu_x$ has $L^q$ density for all $q>1$. Then for every $K>0$ and $q>1$ there exists $C=C(K,q)$ such that for every $\ell >0$ every interval $I$ of length $Kc^\ell$ intersects at most $C (Nc)^\ell c^{-\ell/q}$ intervals $\cblue{h}_{\pmb{\tau}}([0,1]); |\pmb{\tau}| = \ell$.
\end{corollary}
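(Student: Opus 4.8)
The plan is to count the intersecting cylinders by squeezing $\nu_x(I')$ between a lower bound coming from self-similarity and the upper bound furnished by the $L^q$ density, where $I'$ is a mild enlargement of $I$.

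First I would fix an interval $I$ of length $Kc^\ell$ and let $I'$ be the concentric interval of length $(K+2)c^\ell$, that is, the $c^\ell$-neighborhood of $I$. Since each cylinder $h_{\pmb{\tau}}([0,1])$ with $|\pmb{\tau}|=\ell$ is an interval of length exactly $c^\ell$ (the map $h_{\pmb{\tau}}$ contracts by $c^\ell$), every such cylinder that meets $I$ is automatically contained in $I'$. Write $M$ for the number of these cylinders; this is the quantity to be bounded.

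The key step is the lower bound $\nu_x(I')\ge M N^{-\ell}$. For this I would use that $\nu_x$ is the self-similar measure of $\mathcal{H}$ with uniform weights, so iterating the self-similarity relation $\ell$ times gives
\[
\nu_x=N^{-\ell}\sum_{|\pmb{\tau}|=\ell}(h_{\pmb{\tau}})_*\nu_x,
\]
where each $(h_{\pmb{\tau}})_*\nu_x$ is a probability measure supported on $h_{\pmb{\tau}}([0,1])$. Evaluating at $I'$ and keeping only the $M$ terms whose cylinders meet $I$ (each of which lies in $I'$, so the corresponding push-forward assigns full mass $1$ to $I'$) yields $\nu_x(I')\ge M N^{-\ell}$. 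This is the crucial point and the place where the heavy overlapping of the cylinders could be an obstacle: a naive attempt to add up the individual masses $\nu_x(h_{\pmb{\tau}}([0,1]))\ge N^{-\ell}$ would be defeated by the overlaps, whereas the self-similar decomposition counts each symbolic cylinder $[\pmb{\tau}]$ exactly once and thereby sidesteps any inclusion--exclusion bookkeeping.

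Finally I would combine this with Lemma~\ref{lem:add1}, which gives $\nu_x(I')\le C_q^{1/q}|I'|^{1-1/q}$ with $C_q=\int_0^1\varphi^q$. Hence
\[
M\le N^\ell\,\nu_x(I')\le C_q^{1/q}\big((K+2)c^\ell\big)^{1-1/q}N^\ell=C_q^{1/q}(K+2)^{1-1/q}\,(Nc)^\ell c^{-\ell/q},
\]
using $N^\ell c^{\ell(1-1/q)}=(Nc)^\ell c^{-\ell/q}$, and taking $C(K,q):=C_q^{1/q}(K+2)^{1-1/q}$ gives the assertion. No step beyond the self-similar lower bound is delicate; the main conceptual hurdle is precisely recognizing that self-similarity, rather than direct summation, is what controls the overlaps.
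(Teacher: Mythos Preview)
Your proof is correct and follows essentially the same route as the paper: enlarge $I$ to $I'=B_{c^\ell}(I)$, observe that every level-$\ell$ cylinder meeting $I$ lies inside $I'$, bound $\nu_x(I')$ from below by $M\cdot N^{-\ell}$ via the self-similar decomposition, and from above by Lemma~\ref{lem:add1}. Your write-up spells out the lower bound more carefully than the paper (which states $\nu_x(I')\ge Z\cdot N^{-\ell}$ without further comment), but the argument is the same.
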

\begin{proof}
Denote by $Z$ the number of intervals $\cblue{h}_{\pmb{\tau}}([0,1]); |\pmb{\tau}| = \ell$ intersecting $I$. Each of those intervals is contained in the interval $I' = B_{c^\ell}(I)$. Hence,
\[
\nu_x(I') \geq Z\cdot N^{-\ell}.
\]
Applying Lemma \ref{lem:add1} to the interval $I'$ we get the assertion.
\end{proof}

\bibliographystyle{plain}
\bibliography{affin_2016}

\end{document}